\titlespacing{\section}{0pt}{6pt}{4pt}
\titlespacing{\subsection}{0pt}{4pt}{2pt}
\newtheorem{thm}{Theorem}[section]
\newtheorem{conj}[thm]{Conjecture}
\newtheorem{cor}[thm]{Corollary}
\newtheorem{lem}[thm]{Lemma}
\newtheorem{prop}[thm]{Proposition}
\newtheorem{pro}[thm]{Problem}
\newtheorem{obs}[thm]{Observation}
\makeatletter \@addtoreset{equation}{section}
\begin{document}
\begin{center}
\begin{spacing}{1.5}
{\Large \bf Saturation numbers of $K_{2}\vee P_{k}$} \\[3pt]
\end{spacing}
\end{center}

\begin{center}
{Xiaoxue Zhang}, {Lihua You$^*$}, {Xinghui Zhao}

School of Mathematical Sciences\\
South China Normal University, Guangzhou 510631, P.R. China\\[10pt]
zhang\_xx1209@163.com, ylhua@scnu.edu.cn, xhzhao@m.scnu.edu.cn\\
\end{center}

\let\thefootnote\relax\footnotetext{$^*$Corresponding author.}

\vskip 3mm \noindent {\bf Abstract:}
A graph $G$ is called $H$-saturated if $G$ contains no copy of $H$, but $G+e$ contains a copy of $H$ for any edge $e\in E(\overline{G})$. The saturation number of $H$ is the minimum number of edges in an $H$-saturated graph of order $n$, denoted by $sat(n,H)$. In this paper, we investigate $sat(n,K_{2}\vee P_{k})$, where $k\geq 3$. Let $a_k$ be an integer, defined as follows: $a_k=k$ for $3\leq k\leq 5$; $a_k=3\cdot 2^{t-1}-2$ for $k=2t\geq 6$; and $a_k=2^{t+1}-2$ for $k=2t+1\geq 7$. We show that $sat(n, K_{2}\vee P_{k})=2n-3+sat(n-2,P_{k})$ for $n\geq a_k+2$ and $k\geq 3$, characterize the $K_{2}\vee P_{k}$-saturated graphs with $sat(n,K_{2}\vee P_{k})$ edges, the $K_{1}\vee P_{k}$-saturated graphs with $sat(n,K_{1}\vee P_{k})$ edges for $3\leq k\leq5$ and the $P_{k}$-saturated graphs with $sat(n, P_{k})$ edges for $3\leq k\leq4$.
Furthermore, we propose some questions for further research.

\noindent {\bf Keywords}: Saturation number; Saturated graph; Characterization; Minimal

\section{Introduction}
All graphs considered in this paper are simple, finite and undirected.
We introduce some terminologies and notations, and refer to \cite{Bondy} for terminologies and notations not defined in the paper.

Let $G=(V(G),E(G))$ be a graph with vertex set $V(G)$ and edge set $E(G)$. The \emph{order} of $G$ is $|V(G)|$ and the \emph{size} of $G$ is $|E(G)|$. A path, complete graph, or cycle of order $n$ is denoted by $P_{n}$, $K_{n}$ or $C_{n}$, respectively.
For a vertex $u\in V(G)$, the set of neighbours of $u$ in $G$ is denoted by $N_{G}(u)$, $d_{G}(u)=|N_{G}(u)|$ is the \emph{degree} of  $u$ in $G$, and $N_{G}[u]=N_{G}(u)\cup \{u\}$ is the closed neighborhood of $u$ in $G$.
For $u,v\in V(G)$, the \emph{distance} between $u$ and $v$ is the length of the shortest path connecting $u$ and $v$ in $G$, and denoted $d_{G}(u,v)$.
The \emph{diameter} of $G$ is the greatest distance between two vertices of $G$, denoted by $diam(G)$.
When no ambiguity arises, we use $N(u)$, $d(u)$, $N[u]$ and $d(u,v)$ to replace $N_{G}(u)$, $d_{G}(u)$, $N_{G}[u]$ and $d_{G}(u,v)$, respectively.
Let $\delta(G)$ and $\Delta(G)$ be the minimum degree and maximum degree of vertices in $G$, respectively.
A vertex of degree zero is called an \emph{isolated vertex}.
For $u\in V(G)$, we call $u$ a \emph{conical vertex} of $G$ if $d(u)=|V(G)|-1$.
An \emph{independent set} in a graph is a set of vertices no two of which are adjacent.
For $X\subseteq V(G)$, we use $G[X]$ to denote the subgraph of $G$ induced by $X$. For $X\subseteq V(G)$ and $Y\subseteq V(G)\backslash X$, $E_{G}[X,Y]$ is the set of edges of $G$ between $X$ and $Y$, denoted simply by $E[X,Y]$.
The \emph{union} of two graphs $G_{1}$ and $G_{2}$ is the graph $G_{1}\cup G_{2}$ with vertex set $V(G_{1})\cup V(G_{2})$ and edge set $E(G_{1})\cup E(G_{2})$.
The \emph{join} of graphs $G_{1}$ and $G_{2}$ is obtained from $G_{1}\cup G_{2}$ by joining each vertex of $G_{1}$ to each vertex of $G_{2}$, denoted by $G_{1}\vee G_{2}$.

A graph $H$ is called a \emph{subgraph} of a graph $G$ if $V(H)\subseteq V(G)$ and $E(H)\subseteq E(G)$. A \emph{copy} of $H$ in $G$ is a subgraph of $G$ which is isomorphic to $H$.
A graph $G$ is called \emph{$H$-saturated} if $G$ contains no copy of $H$, but $G+e$ contains a copy of $H$ for any edge $e\in E(\overline{G})$. The \emph{saturation number} of $H$ is the minimum number of edges in an $H$-saturated graph of order $n$, denoted by $sat(n,H)$. If $G$ is an $H$-saturated graph with $sat(n,H)$ edges, we say $G$ is a \emph{minimal $H$-saturated graph}.

The saturation number, as a concept opposite to the Tur\'{a}n number, was first introduced by Erd\"{o}s, Hajnal and Moon \cite{Erdos} in $1964$, though they did not use this terminology. Meanwhile, they proved that $sat(n,K_{p})=(p-2)(n-p+2)+\binom{p-2}{2}$ and $K_{p-2}\vee \overline{K}_{n-p+2}$ is the unique minimal $K_{p}$-saturated graph. In $1986$, K\'{a}szonyi and Tuza \cite{Kaszonyi} obtained a general upper bound for $sat(n,\mathcal{F})$, where $\mathcal{F}$ is a family of graphs, which implies that $sat(n,\mathcal{F})=O(n)$.
Since then, many scholars have focused on this field and determined the saturation numbers for specific graphs, such as stars, paths and matchings \cite{Kaszonyi}, cycles \cite{Ollmann,Tuza,Chen2009,Chen2011,Lan,Furedi}, complete bipartite graphs \cite{chen2014,Huang}, linear forests \cite{chen2015,Fan,Cao}, vertex-disjoint cliques \cite{Faudree,Chen2024,Zhu}, books and generalized books \cite{ChenandFaudree} and so on. For more results, we refer the reader to the survey \cite{Currie}.

The join of graphs plays a fundamental role in graph theory, and the saturation numbers of the join of certain special graphs have been determined. In $2024$, Hu, Luo and Peng \cite{Hu} showed that, if $D$ is a graph without any isolated vertex, then $sat(n,K_{s}\vee D)=\binom{s}{2}+s(n-s)+sat(n-s,D)$ for $n\geq 3s^2-s+2sat(n-s,D)+1$ and $s\geq 1$. However, Qiu, He, Lu and Xu \cite{Qiu} pointed out that $n\geq 3s^2-s+2sat(n-s,D)+1$ implies that $D$ has a component $K_2$. Meanwhile, they also obtained $sat(n,K_{1}\vee C_{k})=n-1+sat(n-1,C_{k})$ for $k\geq 8$ and $n\geq 56k^3$ in \cite{Qiu}. When $k=4$, Song, Hu, Ji and Cui \cite{Song} proved that $sat(n,K_{1}\vee C_{4})=n-1+sat(n-1,C_{4})$ for $n\geq6$. In $2025$, Hu, Ji and Cui \cite{Hu2025} proved that $sat(n, K_{1}\vee P_{k})=n-1+sat(n-1,P_{k})$ for $k\geq 5$ and $n$ sufficiently large.

Based on the above results, the study of saturation number of the join of $K_{2}$ and $P_{k}$ is meaningful.
When $k=2$, we have $K_{2}\vee P_{2}\cong K_{4}$, according to the results of Erd\"{o}s, Hajnal and Moon \cite{Erdos}, $sat(n,K_{4})=2n-3$ for $n\geq 4$ and unique minimal $K_{4}$-saturated graph is $K_{2}\vee \overline{K}_{n-2}$.

In this paper, by using different methods from that used in $K_{1}\vee P_{k}$-saturated graphs, we determine the value of $sat(n,K_{2}\vee P_{k})$ and the minimal $K_{2}\vee P_{k}$-saturated graphs for $k\geq 3$, characterize the minimal $K_{1}\vee P_{k}$-saturated graphs for $3\leq k\leq5$ and the minimal $P_{k}$-saturated graphs for $3\leq k\leq4$. The following is our main result.

\begin{thm}\label{thm1-1}
Let $k\geq 3$, $n\geq a_k+2$, where \begin{equation}\label{eq1.1}
a_{k} =
\begin{cases}
k,  & \text{if $3\leq k\leq 5$}; \\
3 \cdot 2^{t-1}-2, & \text{if $k\geq 6$ and $k=2t$}; \\
2^{t+1}-2, & \text{if $k\geq 7$ and $k=2t+1$}.
\end{cases}
\end{equation}
Then
\begin{equation}\label{eq1.2}
sat(n, K_{2}\vee P_{k})=2n-3+sat(n-2,P_{k})=
\begin{cases}
\lfloor \frac{5n-8}{2} \rfloor,  & \text{if $k=3$}; \\
\frac{5n-5}{2}, & \text{if $k=4$, $n$ is odd}; \\
\frac{5n-8}{2}, & \text{if $k=4$, $n$ is even}; \\
\lceil \frac{17n-32}{6} \rceil, & \text{if $k=5$}; \\
3n-5-\lfloor \frac{n-2}{a_{k}} \rfloor, & \text{if $k\geq 6$}.
\end{cases}
\end{equation}
If $G$ is a $K_{2}\vee P_{k}$-saturated graph of order $n$, then $G$ is a minimal $K_{2}\vee P_{k}$-saturated graph if and only if $G\cong K_{2}\vee F$ for $n\geq10$, and $G\in\mathcal{G}$ for $n\leq9$, where $F$ is a minimal $P_{k}$-saturated graph of order $n-2$, $H_i$ $(1\leq i\leq10)$ are shown in Figure $1$, and
\vspace{-0.3cm}
\[\mathcal{G}=
\begin{cases}
\{K_{2}\vee F,K_1\vee H_1,K_1\vee H_2,K_1\vee H_3,H_9\},  & \text{if $k=3$}; \\
\{K_{2}\vee F,K_1\vee H_4,K_1\vee H_5,K_1\vee H_6,H_{10}\}, & \text{if $k=4$}; \\
\{K_{2}\vee F,K_1\vee H_7,K_1\vee H_8\}, & \text{if $k=5$}.
\end{cases}\]
\end{thm}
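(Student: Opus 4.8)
The plan is to reduce the whole problem to the saturation theory of $P_k$ through a single structural equivalence and then to fight for the matching lower bound. The organizing lemma I would prove first is
\[
K_2\vee F \text{ contains } K_2\vee P_k \iff F \text{ contains } P_k,
\]
valid for every graph $F$ and every $k\ge 3$. The direction $(\Leftarrow)$ is immediate, since the two vertices of the $K_2$ are adjacent to one another and to all of $F$ and can serve as hubs over a path inside $F$. For $(\Rightarrow)$ I would let $u,v$ be the two dominating vertices of $K_2\vee F$, take a copy of $K_2\vee P_k$ with hub set $\{x,y\}$ and path $Q=q_1\cdots q_k$, and split on how many of $u,v$ lie on $Q$: if none, then $Q\subseteq F$; if a dominating vertex occupies a path position, I replace it by a hub lying in $V(F)$ (at most one of $u,v$ can be a hub, so such a hub exists), and because every hub is adjacent in $F$ to all path vertices this substitution reconnects $Q$ into a $P_k$ inside $F$, using $xy\in E(F)$ when two replaced positions are adjacent. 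This lemma immediately gives the construction and the upper bound: if $F$ is a minimal $P_k$-saturated graph of order $n-2$, then $K_2\vee F$ contains no $K_2\vee P_k$, every missing edge lies inside $F$ so its addition creates a $P_k$ and hence a $K_2\vee P_k$, and the edge count is $1+2(n-2)+sat(n-2,P_k)=2n-3+sat(n-2,P_k)$.

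The core of the theorem is the reverse inequality: every $K_2\vee P_k$-saturated graph $G$ of order $n\ge a_k+2$ has at least $2n-3+sat(n-2,P_k)$ edges. I would first record the cheap consequences of saturation, namely $diam(G)\le 2$ (for a missing edge $xy$ the created copy keeps $x,y$ at distance two through a common hub even after deleting $xy$) and $\delta(G)\ge 2$ (each endpoint of a missing edge has degree at least three inside its created copy). The clean case is $n\ge 10$, which covers the entire range whenever $k\ge 6$ since then $a_k+2\ge 12$. Here the target is two adjacent conical vertices $u,v$ with $d(u)=d(v)=n-1$; once these are produced, the equivalence lemma forces $F:=G-\{u,v\}$ to be $P_k$-saturated, whence $|E(F)|\ge sat(n-2,P_k)$ and $|E(G)|=|E(F)|+(n-1)+(n-2)\ge 2n-3+sat(n-2,P_k)$. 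To locate $u,v$ I would start from a vertex of near-maximum degree, analyse the copies forced when joining it to each non-neighbour, argue that it is repeatedly pushed into a hub role and so must be adjacent to all but boundedly many vertices, and then bootstrap from one such vertex to two.

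The main obstacle is precisely this domination step, together with the fact that it cannot be claimed for arbitrary saturated graphs: as the wheel-type examples for $K_4$ show, a saturated graph may have a single conical vertex and still lie strictly above the minimum, so the genuine lower bound must be a global count in which any deficiency in domination is shown to be paid for by extra edges elsewhere. This is where the hypothesis $n\ge a_k+2$ is essential, since it provides enough room to realize at least one full extremal $P_k$-saturated component and to rule out the deficiency being spread thinly across many vertices. I expect the technically hardest part of the paper to be converting the merely local "a copy appears" information into this quantitative edge inequality.

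Finally, for the characterization I would read the equality cases off the lower-bound analysis. For $n\ge 10$ the remainder $F=G-\{u,v\}$ must be a \emph{minimal} $P_k$-saturated graph of order $n-2\ge 8$; since such a graph is a forest of bounded components it has no conical vertex, so the pair $\{u,v\}$ is intrinsic and $G\cong K_2\vee F$, exactly as claimed. The small range $a_k+2\le n\le 9$, which occurs only for $k\in\{3,4,5\}$, I would settle by direct finite analysis against Figure~1: here $F$ may itself carry a conical vertex (for instance a star in the $P_4$ case), so that $G$ degenerates to a graph of the form $K_1\vee H_i$ or to one of the sporadic graphs $H_9,H_{10}$, and each member of $\mathcal G$ must be checked for $K_2\vee P_k$-saturation and minimality while confirming that no other extremal graph survives. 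Substituting the known values of $sat(n-2,P_k)$ for $3\le k\le 5$ then yields the explicit closed forms recorded in \eqref{eq1.2}.
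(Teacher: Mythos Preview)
Your upper bound and the reduction lemma are fine and match the paper (their Lemma~2.3 applied twice). The gap is in the lower bound. You correctly diagnose that the whole difficulty is the ``domination step'' and that it must really be a global edge count, but you then stop: ``start from a vertex of near-maximum degree, analyse the copies forced \ldots'' is not a method, and you yourself flag it as the main obstacle without resolving it. There is no mechanism in your proposal by which a high-degree vertex gets \emph{forced} to be conical; in fact the paper's exceptional graphs $H_9,H_{10}$ show that for small $n$ no conical vertex exists at all, so any argument has to be quantitative rather than structural.

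The paper's actual route is completely different from what you sketch. They never try to manufacture a conical vertex. Instead they split on $\Delta(G)$: if $\Delta(G)=n-1$ they peel off the conical vertex and invoke the $K_1\vee P_k$ result (their Theorem~3.3, which is itself nontrivial and proved separately); if $\Delta(G)\le n-2$ they fix a vertex $v$ of \emph{minimum} degree with $|E(G[N(v)])|$ minimal, partition $U=V(G)\setminus N[v]$ into layers $U_i=\{u\in U:|N(u)\cap N(v)|=i\}$, and grind out the inequality $|E(G)|\ge 2n-3+sat(n-2,P_k)$ by a long case analysis on $\delta(G)\in\{3,4,5\}$ and on the isomorphism type of $G[N(v)]$ (fifteen numbered Claims). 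The engine is not $diam(G)\le 2$ but the stronger fact (their Proposition~4.1) that any two nonadjacent vertices have at least \emph{two} common neighbours, which repeatedly forces the $U_i$ with large $i$ to be nonempty and pumps up the edge count. None of this machinery appears in your plan, and without it you have no way to convert ``a copy appears'' into the needed inequality.
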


The rest of this paper is organized as follows. In Section $2$, we present some known results that are useful for the proof. In Section $3$, we study $sat(n,K_1\vee P_k)$ and the minimal $K_1\vee P_k$-saturated graphs for $3\leq k\leq5$. In Section $4$, we prove Theorem \ref{thm1-1}. In Section $5$, we study the minimal $P_{k}$-saturated graphs for $3\leq k\leq4$, and propose some problems for further research. In Section $6$, we conclude this paper.

\begin{center}
\scalebox{1.2}[1.2]{\includegraphics{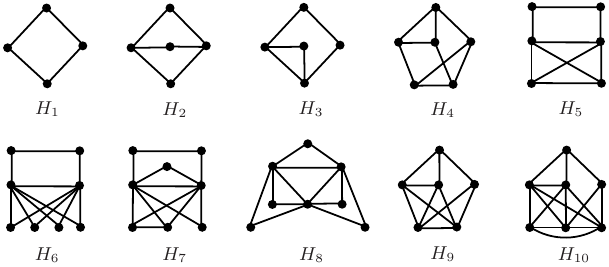}}\\
\captionof{figure}{Graphs $H_{i}$, where $1\leq i\leq 10$.}
\end{center}

\section{Preliminaries}
In this section, we present some results that will be used later.

For $P_{k}$-saturated graphs and $K_{1}\vee P_{k}$-saturated graphs, the authors \cite{Kaszonyi}, \cite{Hu2025} obtained the following results.

\begin{thm}[\rm\!\!\cite{Kaszonyi}]\label{thm2-1}
Let $k\geq 3$ and $a_{k}$ defined as $\mathrm{(\ref{eq1.1})}$. Then we have
\mbox{}\\
$\mathrm{(i)}$$sat(n,P_{3})=\lfloor \frac{n}{2} \rfloor$ for $n\geq 3$.\vspace{0.1cm}\\
$\mathrm{(ii)}$$sat(n,P_{4})=
\begin{cases}
\frac{n}{2}, & \text{for $n\geq 4$, $n$ is even};\\
\frac{n+3}{2},  & \text{for $n\geq 5$, $n$ is odd}.
\end{cases}$\vspace{0.1cm}\\
%$\mathrm{(ii)}$ $sat(n,P_{4})= \frac{n+3}{2}$ for $n\geq 5$ and odd $n$, $sat(n,P_{4})=\frac{n}{2}$ for $n\geq 4$ and even $n$.\vspace{0.1cm}\\
$\mathrm{(iii)}$$sat(n,P_{5})=\lceil \frac{5n-4}{6} \rceil$ for $n\geq 5$.\vspace{0.1cm}\\
$\mathrm{(iv)}$$sat(n,P_{k})=n-\lfloor \frac{n}{a_{k}} \rfloor$ for $n\geq a_{k}$ and $k\geq 6$.
\end{thm}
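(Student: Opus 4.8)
The plan is to prove matching upper and lower bounds, organized around a common ``forest'' framework, and then to split off the small cases $k\in\{3,4,5\}$. Throughout, for a component $C$ of a graph and a vertex $v\in C$, write $\ell_C(v)$ for the maximum number of vertices on a path of $C$ having $v$ as an endpoint, and set $m(C)=\min_{v\in C}\ell_C(v)$. The first step is to record two local consequences of saturation. If $G$ is $P_k$-saturated and $u,v$ lie in distinct components $C_i,C_j$, then adding $uv$ must create a $P_k$; since a longest path of $G+uv$ through $uv$ has $\ell_{C_i}(u)+\ell_{C_j}(v)$ vertices, saturation forces $m(C_i)+m(C_j)\ge k$ for all $i\ne j$. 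A short parity check then shows at most one component can have $m(C)<\lceil k/2\rceil$; every other component satisfies the \emph{local depth condition} that each of its vertices is an endpoint of a path on at least $\lceil k/2\rceil$ vertices. Second, if a component contains a non-edge, adding it creates a $P_k$, so that component has at least $k$ vertices.

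The heart of the argument is the following \textbf{key lemma}: for $k\ge 6$, every $P_k$-free, internally $P_k$-saturated graph $C$ satisfying the local depth condition has $|V(C)|\ge a_k$, and equality holds for a tree $T_k$. I would construct $T_k$ recursively around a centre, where the two requirements ``longest path has $\le k-1$ vertices'' and ``every vertex starts a path on $\ge\lceil k/2\rceil$ vertices'' force a balanced branching, and internal saturation forces a third, ``fresh'' branch through the centre to route the path created by an arbitrary chord. This pins the minimum order to the recursion $a_{k+2}=2a_k+2$ with base values $a_6=10$ and $a_7=14$ computed by hand — exactly the phenomenon one sees for $k=6$, where the minimal tree is a centre carrying three pendant cherries on $10$ vertices — and a direct induction confirms $3\cdot 2^{t-1}-2$ and $2^{t+1}-2$ solve it. Proving the lower bound $|V(C)|\ge a_k$, rather than merely exhibiting $T_k$, is the combinatorial core and the main obstacle; I expect to argue it by induction on $k$, peeling off a deepest branch at the centre, using internal saturation to show the remainder again satisfies the hypotheses for $P_{k-2}$, and invoking the inductive size bound.

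With the key lemma in hand I assemble the case $k\ge 6$. For the lower bound, $|E(G)|\ge n-c(G)$, where $c(G)$ is the number of components, with equality iff $G$ is a forest and with any cyclic component only increasing $|E(G)|$; it therefore suffices to bound $c(G)\le\lfloor n/a_k\rfloor$. By the first step all but at most one component have $m(C)\ge\lceil k/2\rceil$ and hence, by the key lemma, at least $a_k$ vertices; the single possible exceptional component is either complete of small order (which is expensive in edges and so cannot beat the bound) or is likewise counted, and a short accounting against the floor yields $c(G)\le\lfloor n/a_k\rfloor$, so $sat(n,P_k)\ge n-\lfloor n/a_k\rfloor$. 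For the matching upper bound I exhibit the forest consisting of $q=\lfloor n/a_k\rfloor$ disjoint copies of $T_k$ together with the remaining $r=n-qa_k$ vertices absorbed into one copy as pendants; I then verify $P_k$-freeness and saturation directly and count $q(a_k-1)+r=n-q$ edges.

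Finally I treat $k\in\{3,4,5\}$ separately, since here the minimal saturated component is small and explicit: $K_2$ for $P_3$ and $P_4$ (the local depth condition being merely ``no isolated vertex''), and the double star $S_{2,2}$ on six vertices for $P_5$. The upper bounds come from the corresponding packings — a maximum matching for $P_3$ giving $\lfloor n/2\rfloor$; a perfect matching, or a matching plus one triangle to absorb the parity defect, for $P_4$ giving $n/2$ or $(n+3)/2$; and disjoint copies of $S_{2,2}$ with a controlled remainder for $P_5$ giving $\lceil(5n-4)/6\rceil$. The lower bounds follow from the same component analysis specialised to minimal component sizes $2,2,6$ respectively, with the small parity and ceiling corrections verified by hand.
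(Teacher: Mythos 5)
A preliminary point: the paper does not prove this statement at all --- Theorem \ref{thm2-1} is imported verbatim from K\'aszonyi and Tuza \cite{Kaszonyi} --- so there is no internal proof to compare against, and your proposal can only be judged on its own merits as a reconstruction of the known argument. Its skeleton is the right one: the cross-component inequality $m(C_i)+m(C_j)\geq k$ (a longest path of $G+uv$ through the bridge $uv$ has exactly $\ell_{C_i}(u)+\ell_{C_j}(v)$ vertices, so your derivation is correct), the conclusion that at most one component has $m(C)<\lceil k/2\rceil$, the recursion $a_{k+2}=2a_k+2$ with base values $a_6=10$ and $a_7=14$, the forest construction for the upper bound, and the small cases $k\in\{3,4,5\}$ (your extremal descriptions for $k=3,4$ agree with Propositions \ref{prop5-6} and \ref{prop5-7} of this paper, and $S_{2,2}$ is indeed the right $6$-vertex component for $P_5$).

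As a proof, however, the proposal has genuine gaps. First, its self-declared ``combinatorial core'', the lower bound $|V(C)|\geq a_k$ in the key lemma, is only announced (``I expect to argue it by induction''), and that bound is essentially the entire content of part (iv); what remains is routine. Second, the key lemma is false as stated: $K_{k-1}$ is $P_k$-free, \emph{vacuously} internally saturated (it has no non-edge), and satisfies the depth condition with $m=k-1\geq\lceil k/2\rceil$, yet $k-1<a_k$ for all $k\geq 6$. Correspondingly, the reduction ``it suffices to bound $c(G)\leq\lfloor n/a_k\rfloor$'' is unavailable: when $(k-1)\mid n$, the graph $\tfrac{n}{k-1}K_{k-1}$ is $P_k$-saturated with $n/(k-1)>\lfloor n/a_k\rfloor$ components. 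The correct reduction bounds only the number of \emph{acyclic} components, since $|E(G)|\geq n-(\text{number of tree components})$ while cyclic components satisfy $|E(C)|\geq|V(C)|$; the key lemma must then be restricted to trees, where internal saturation is no longer vacuous and genuinely excludes examples such as $P_5$ for $k=6$. Third, the ``short accounting'' for the single exceptional component is not short: for odd $k$ an exceptional tree with $m(C_0)=\lfloor k/2\rfloor<\lceil k/2\rceil$ can coexist with extremal components, and recovering the exact floor $\lfloor n/a_k\rfloor$ requires a quantitative lower bound on the order of such an exceptional saturated tree in terms of its depth --- precisely where the K\'aszonyi--Tuza argument does its real work and where your sketch is silent. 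Finally, in the upper bound the remainder cannot be ``absorbed as pendants'' arbitrarily: a pendant hung on an endpoint of a longest path of $T_k$ creates a $P_k$, so the extra vertices must be attached at the deepest internal level (enlarging the bottom stars), and saturation of the enlarged tree still needs verification.
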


\begin{thm}[\rm\!\!\cite{Hu2025}]\label{thm2-2}
Let $a_{k}$ defined as $\mathrm{(\ref{eq1.1})}$, with $n\geq
\begin{cases}
7, & \text{if $k=5$};\\
a_{k}+1,  & \text{if $k\geq 6$}.
\end{cases}$\\
Then $sat(n, K_{1}\vee P_{k})=n-1+sat(n-1,P_{k})$.
Moreover, the minimal $K_{1}\vee P_{k}$-saturated graph is either $K_{1}\vee F$, where $F$ is a minimal $P_{k}$-saturated graph of order $n-1$, or $H_7$, $H_8$ for $k=5$.
\end{thm}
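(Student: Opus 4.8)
The plan is to establish matching upper and lower bounds and read off the extremal graphs along the way, reducing everything to the already-known path case in Theorem~\ref{thm2-1}.

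\textbf{Upper bound and the construction.} First I would exhibit the extremal graph. Let $F$ be a minimal $P_k$-saturated graph on $n-1$ vertices and set $G_0=K_1\vee F$ with cone vertex $w$. I would use that minimal $P_k$-saturated graphs are forests (the structure underlying Theorem~\ref{thm2-1}), so that for each $z\in V(F)$ the set $N_F(z)$ is independent; hence $G_0[N_{G_0}(z)]=K_1\vee F[N_F(z)]=K_{1,d_F(z)}$ is $P_k$-free for $k\ge 5$, and $G_0[N_{G_0}(w)]=F$ is $P_k$-free. Since a copy of $K_1\vee P_k$ in $G_0$ would force a $P_k$ in the neighborhood of its apex, $G_0$ is $K_1\vee P_k$-free. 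The non-edges of $G_0$ are exactly the non-edges of $F$; for such an $xy$ we have $F+xy\supseteq P_k$, and coning this path with $w$ gives $K_1\vee P_k$ in $G_0+xy$. Thus $G_0$ is $K_1\vee P_k$-saturated with $(n-1)+sat(n-1,P_k)$ edges, which proves the upper bound and simultaneously the ``if'' direction of the characterization for every $K_1\vee F$ with $F$ minimal. For $k=5$ I would additionally verify directly that $H_7,H_8$ of Figure~$1$ are $K_1\vee P_5$-saturated with the same number of edges.

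\textbf{Reduction lemma.} The heart of the lower bound is: if $G$ is $K_1\vee P_k$-saturated and has a dominating vertex $w$ (so $G=K_1\vee(G-w)$), then $G-w$ is $P_k$-saturated. That $G-w$ is $P_k$-free is immediate, since a $P_k$ in $G-w$ coned by $w$ would give $K_1\vee P_k$. For saturation, take a non-edge $xy$ of $G-w$ (also a non-edge of $G$); then $G+xy$ contains a copy $Q\cong K_1\vee P_k$ using $xy$, with apex $c$ and path $P$. If $c=w$ or $w\notin V(Q)$, then $P$ (resp.\ $Q$) already lies in $(G-w)+xy$ and we are done. Otherwise $w$ is a vertex of $P$; deleting it splits $P$ into subpaths $A$ and $B$ (with $B$ possibly empty) where $|A|+|B|=k-1$, all of whose vertices are adjacent to $c\neq w$. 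Then the path $A\,c\,B$ has $k$ vertices and lies in $(G-w)+xy$, giving the required $P_k$. Consequently $|E(G)|=(n-1)+|E(G-w)|\ge (n-1)+sat(n-1,P_k)$, with equality exactly when $G-w$ is a minimal $P_k$-saturated graph, i.e.\ $G\cong K_1\vee F$.

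\textbf{Existence of a dominating vertex (the main obstacle).} It remains to show that a minimal $K_1\vee P_k$-saturated graph $G$ must have a dominating vertex, the only exceptions being $H_7,H_8$ when $k=5$; this is where the real work lies. I would argue by contradiction, taking $w$ of maximum degree $D=\Delta(G)\le n-2$ and a non-neighbor $u$, and analyzing the copy of $K_1\vee P_k$ created in $G+uw$ according to the role of the new edge $uw$ (an apex-edge at $w$, an apex-edge at $u$, or a path-edge under some third apex). This forces path-rich neighborhoods, in particular a $P_{k-1}$ inside $G[N(w)]$ while $G[N(w)]$ stays $P_k$-free. Combining this local density with the sparsity of $P_k$-saturated graphs from Theorem~\ref{thm2-1} (bounded component sizes governed by $a_k$), a counting argument over $V(G)$ should show that either some vertex is adjacent to all others---contradicting $D\le n-2$---or $G$ collapses to finitely many configurations, which for $k\ge 6$ all exceed the edge bound and for $k=5$ are precisely $H_7,H_8$. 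I expect this to be the hard step, both because the case analysis of the created copies is delicate and because isolating exactly the two $k=5$ exceptions (while excluding analogues for larger $k$) needs the exact values of $sat(n-1,P_k)$ and the stated thresholds on $n$. Once the dominating vertex is in hand, the reduction lemma yields both the formula and the full characterization.
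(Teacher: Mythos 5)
This statement is imported from \cite{Hu2025}; the paper does not reprove it, but its surrounding machinery matches your first two steps exactly: your ``reduction lemma'' with the $A\,c\,B$ path surgery is the paper's Lemma~\ref{lem2-3} (a conical vertex $w$ makes $G$ $K_1\vee H$-saturated iff $G-w$ is $H$-saturated), and your equality analysis is Proposition~\ref{pro3-1*}. Those parts of your proposal are correct. One repairable flaw there: you justify that $K_1\vee F$ is $K_1\vee P_k$-free by asserting that minimal $P_k$-saturated graphs are forests ``underlying Theorem~\ref{thm2-1}''. That assertion is not in Theorem~\ref{thm2-1} and is false in general: by Proposition~\ref{prop5-7}, $K_3\cup\frac{n-3}{2}K_2$ is a minimal $P_4$-saturated graph, so triangle-freeness of $F$ cannot be taken for granted. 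The flaw is harmless only because your own path surgery already proves both directions of Lemma~\ref{lem2-3} with no structural hypothesis on $F$ whatsoever; you should have used it there instead.

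The genuine gap is your third step, which is where the entire difficulty of the theorem lives. Everything beyond the case $\Delta(G)=n-1$ is deferred to ``a counting argument over $V(G)$ should show\dots'' and ``I expect this to be the hard step'', i.e.\ it is a research plan, not a proof. Concretely: (i) your intermediate claim that a non-edge $uw$ at a maximum-degree vertex forces a $P_{k-1}$ inside $G[N(w)]$ is not forced --- the copy of $K_1\vee P_k$ created in $G+uw$ may have a third vertex $c$ as apex with $uw$ merely an edge of the path part, which yields path structure only inside $N(c)$, not $N(w)$; (ii) you give no mechanism for converting ``path-rich neighborhoods'' into the bound $|E(G)|\geq n-1+sat(n-1,P_k)$ when $\Delta(G)\leq n-2$, nor for isolating exactly $H_7,H_8$ at $k=5$ while excluding analogues for $k\geq6$; this is precisely where the thresholds $n\geq7$ and $n\geq a_k+1$ must enter, and they never do in your sketch. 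It is also worth noting that where this paper actually proves lower bounds of this type (Theorem~\ref{thm3-3} in Appendix~A.2 and Theorem~\ref{thm1-1} in Section~4), the strategy is the opposite of yours: one fixes a \emph{minimum}-degree vertex $v$ chosen to minimize $|E(G[N(v)])|$, uses the diameter-$2$ property (Propositions~\ref{pro3-2*} and \ref{pro4-1}) to partition $V(G)\setminus N[v]$ into the sets $U_i$ by $|N(u)\cap N(v)|$, and runs degree-sum estimates case by case on $\delta(G)$ and $|E(G[N(v)])|$. So your proposal correctly reconstructs the easy half (construction plus conical-vertex reduction) but leaves the lower-bound half --- the substance of the cited theorem --- unproved.
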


The following lemma is useful and interesting.

\begin{lem}\label{lem2-3}
Let $G$ be a graph of order $n$, $u\in V(G)$ with $d_{G}(u)=n-1$. Then $G$ is $K_{1}\vee H$-saturated if and only if $G[V(G)\backslash\{u\}]$ is $H$-saturated.
\end{lem}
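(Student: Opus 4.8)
The plan is to reduce everything to two elementary ``transfer'' operations between $G$ and $G':=G[V(G)\setminus\{u\}]$, and then to read off the saturation condition directly from its definition. The starting observation is that, since $d_G(u)=n-1$, the vertex $u$ is adjacent to every other vertex; hence $G=K_1\vee G'$, and the non-edges of $G$ are exactly the non-edges of $G'$ (as $u$ lies on no non-edge). In particular $E(\overline{G})=E(\overline{G'})$, so the two saturation conditions range over the same set of candidate edges.

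First I would isolate the easy ``build-up'' direction: if $\Gamma$ is any graph in which $u$ is adjacent to all other vertices and $\Gamma-u$ contains a copy $Q$ of $H$, then taking $u$ as an apex over $V(Q)$ produces a copy of $K_1\vee H$ in $\Gamma$ (every edge $uv$ with $v\in V(Q)$ is present because $u$ is universal, and $u\notin V(Q)$). Applying this with $\Gamma=G$ shows that an $H$-copy in $G'$ forces a $K_1\vee H$-copy in $G$, and applying it with $\Gamma=G+e$ shows that an $H$-copy in $G'+e=(G+e)-u$ forces a $K_1\vee H$-copy in $G+e$.

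The main work is the converse ``break-down'' step, which I expect to be the only real obstacle: if $u$ is adjacent to all other vertices of $\Gamma$ and $\Gamma$ contains a copy of $K_1\vee H$ with apex $w$ and base $Q\cong H$ (so $w\notin V(Q)$ and $w$ is adjacent to every vertex of $V(Q)$), then $\Gamma-u$ already contains a copy of $H$. If $u\notin V(Q)$ this is immediate, since then $Q\subseteq\Gamma-u$. The delicate case is $u\in V(Q)$; here $w\neq u$, and I would relabel the copy by swapping the roles of $u$ and $w$, i.e. replace $u$ by $w$ inside $Q$. Every edge of the resulting copy avoids $u$: edges of $Q$ not incident to $u$ survive unchanged, while each edge $uv$ of $Q$ is replaced by $wv$, which exists in $\Gamma$ because $w$ is adjacent to all of $V(Q)$. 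Since $w\notin V(Q)$, this yields an honest copy of $H$ on $(V(Q)\setminus\{u\})\cup\{w\}\subseteq V(\Gamma)\setminus\{u\}$, hence inside $\Gamma-u$. Using this with $\Gamma=G$ and with $\Gamma=G+e$ gives the reverse implications of the two facts above.

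Finally I would assemble these equivalences. Combining build-up and break-down gives that $G$ contains $K_1\vee H$ if and only if $G'$ contains $H$, and, for each non-edge $e$, that $G+e$ contains $K_1\vee H$ if and only if $G'+e$ contains $H$. Since $E(\overline{G})=E(\overline{G'})$, the definition of saturation then reads: $G$ is $K_1\vee H$-free and every $G+e$ contains $K_1\vee H$ precisely when $G'$ is $H$-free and every $G'+e$ contains $H$, that is, $G$ is $K_1\vee H$-saturated iff $G'$ is $H$-saturated. The only subtlety to double-check is the swap argument, ensuring the new base vertex $w$ is distinct from all retained vertices of $Q$ (guaranteed by $w\notin V(Q)$) so that the relabeled subgraph is genuinely isomorphic to $H$.
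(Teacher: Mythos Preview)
Your proof is correct and follows essentially the same approach as the paper: both verify the two halves of the saturation condition by transferring copies between $G$ and $G'=G-u$ using the universality of $u$. Your explicit ``swap'' of $u$ with the apex $w$ is just a hands-on version of the paper's observation that $u$, being adjacent to every other vertex of the $K_1\vee H$ copy, can itself be taken as the conical vertex, so that deleting it leaves a copy of $H$.
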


\begin{proof}
Suppose that $G$ is $K_{1}\vee H$-saturated, then $G[V(G)\backslash\{u\}]$ contains no copy of $H$. Otherwise, $G$ contains a copy of $K_{1}\vee H$, a contradiction. For any $e\in E(\overline{G}[V(G)\backslash\{u\}])\subseteq E(\overline{G})$, $G+e$ contains a copy of $K_{1}\vee H$, say $\hat{H}$. If $u\notin V(\hat{H})$, then $G[V(G)\backslash\{u\}]$ contains a copy of $H$, a contradiction. Therefore, $u\in V(\hat{H})$, and $u$ is a conical vertex of $\hat{H}$ by $d_{G}(u)=n-1$. Thus $G[V(G)\backslash\{u\}]+e$ contains a copy of $H$, and $G[V(G)\backslash\{u\}]$ is $H$-saturated.

Suppose that $G[V(G)\backslash\{u\}]$ is $H$-saturated. Then $G[V(G)\backslash\{u\}]$ contains no copy of $H$, thus $G$ contains no copy of $K_{1}\vee H$. For any $e\in E(\overline{G})$, we know $e\in E(\overline{G}[V(G)\backslash\{u\}])$ by $d_{G}(u)=n-1$, then $G[V(G)\backslash\{u\}]+e$ contains a copy of $H$ since $G[V(G)\backslash\{u\}]$ is $H$-saturated, and thus $G+e$ contains a copy of $K_{1}\vee H$. Therefore, $G$ is $K_{1}\vee H$-saturated.
\end{proof}

%By Lemma \ref{lem2-3}, the following corollary is obtained immediately.
By Lemma \ref{lem2-3}, the following result is obtained immediately.

\begin{cor}[\rm\!\!\cite{Cameron}]\label{cor2-4}
For all $n\geq |V(H)|+1$, $sat(n,K_{1}\vee H)\leq n-1+sat(n-1,H)$.
\end{cor}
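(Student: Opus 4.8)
The plan is to exhibit a single $K_{1}\vee H$-saturated graph of order $n$ whose size is exactly $n-1+sat(n-1,H)$; since $sat(n,K_{1}\vee H)$ is by definition the minimum size over all $K_{1}\vee H$-saturated graphs of order $n$, the claimed upper bound then follows at once. The construction is precisely the one suggested by Lemma \ref{lem2-3}, read in its converse direction.

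First I would take $F$ to be a minimal $H$-saturated graph of order $n-1$, which exists because the hypothesis $n\geq |V(H)|+1$ gives $n-1\geq |V(H)|$; by definition $F$ has exactly $sat(n-1,H)$ edges. Next I would form $G=K_{1}\vee F$ by adjoining a new vertex $u$ joined to every vertex of $F$, so that $G$ has order $n$ and $d_{G}(u)=n-1$, i.e.\ $u$ is a conical vertex of $G$. Since $G[V(G)\backslash\{u\}]=F$ is $H$-saturated, the ``if'' direction of Lemma \ref{lem2-3} immediately yields that $G$ is $K_{1}\vee H$-saturated.

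It then remains only to count edges. Forming the join adds exactly $n-1$ edges incident to $u$ and no others, so
\[
|E(G)|=|E(F)|+(n-1)=sat(n-1,H)+(n-1).
\]
As $G$ is a $K_{1}\vee H$-saturated graph of order $n$, the definition of the saturation number gives $sat(n,K_{1}\vee H)\leq |E(G)|=n-1+sat(n-1,H)$, which is exactly the desired inequality. There is essentially no obstacle here: the entire content is already packaged in Lemma \ref{lem2-3}, and the only points needing a word of care are that $n\geq |V(H)|+1$ guarantees the existence of the minimal $H$-saturated graph $F$ on $n-1$ vertices and that the edge count of the join is computed correctly. This is why the result is ``obtained immediately.''
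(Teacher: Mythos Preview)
Your proof is correct and follows exactly the approach the paper intends: the paper states only that the corollary is ``obtained immediately'' from Lemma~\ref{lem2-3}, and your argument is precisely the natural unpacking of that sentence---take a minimal $H$-saturated graph $F$ on $n-1$ vertices, form $K_1\vee F$, invoke the converse direction of Lemma~\ref{lem2-3}, and count edges.
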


Observed that $K_{2}\vee P_{k}=K_{1}\vee(K_{1}\vee P_{k})$. By applying Corollary \ref{cor2-4} twice, we can obtain the following proposition.

\begin{prop}\label{prop2-5}
For all $n\geq k+2$, $sat(n,K_{2}\vee P_{k})\leq 2n-3+sat(n-2,P_{k})$.
\end{prop}

A natural question is whether the equality in Proposition \ref{prop2-5} holds. In this paper, we study this question and obtain Theorem \ref{thm1-1}.

\section{Further results on $\boldsymbol{K_1\vee P_k}$-saturated graphs}
In this section, we present further results on $K_1\vee P_k$-saturated graphs. %Motivated by Theorem \ref{thm2-2}, we study $K_1\vee P_k$-saturated graph and give a characterization of the minimal $K_1\vee P_k$-saturated graph for $3\leq k\leq5$.
The following two propositions are useful.

\begin{prop}\label{pro3-1*}
Let $G$ be a $K_1\vee H$-saturated graph of order $n$. If $\Delta(G)=n-1$, then $|E(G)|\geq n-1+sat(n-1,H)$, the equality holds if and only if
$G\cong K_1\vee F$, where $F$
is a minimal $H$-saturated graph of order $n-1$.
\end{prop}

\begin{proof}
If $\Delta(G)=n-1$, then $G$ has a conical vertex, say $u$. It follows that $G[V(G)\backslash\{u\}]$ is $H$-saturated by Lemma \ref{lem2-3}, and thus $|E(G[V(G)\backslash\{u\}])|\geq sat(n-1,H)$. Therefore, $|E(G)|=n-1+|E(G[V(G)\backslash\{u\}])|\geq n-1+sat(n-1,H)$, the equality holds if and only if $|E(G[V(G)\backslash\{u\}])|=sat(n-1,H)$, which implies $G[V(G)\backslash\{u\}]$ is a minimal $H$-saturated graph, that is, $G\cong K_1\vee F$, where $F$
is a minimal $H$-saturated graph of order $n-1$.
\end{proof}
%Here is an important property of $K_1\vee H$-saturated graph.

\begin{prop}[\rm\!\!\cite{Hu2025}]\label{pro3-2*}
Suppose $H$ is a graph without isolated vertices. If $G$ is a $K_1\vee H$-saturated graph of order $n$, then $diam(G)=2$.
\end{prop}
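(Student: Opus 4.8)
The plan is to prove the two inequalities $diam(G)\geq 2$ and $diam(G)\leq 2$ separately, where the second amounts to showing that every pair of non-adjacent vertices has a common neighbour. For the lower bound, I would first observe that saturation forces $E(\overline{G})\neq\emptyset$: if $G$ were complete, then since $n\geq|V(K_1\vee H)|$ the graph $K_n$ would already contain a spanning $K_{|V(H)|+1}$, hence a copy of $K_1\vee H$, contradicting that $G$ is $K_1\vee H$-free. So $G$ has at least one non-adjacent pair, and once we know all pairs are at distance at most $2$ this gives $diam(G)=2$ exactly (this also yields connectivity for free).

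For the upper bound I would fix a non-adjacent pair $u,v$, so that $uv\in E(\overline{G})$. By saturation, $G+uv$ contains a copy $\hat{H}$ of $K_1\vee H$, and since $G$ itself contains no such copy, the copy $\hat{H}$ must use the new edge $uv$; in particular $u,v\in V(\hat{H})$ and $uv\in E(\hat{H})$. Let $c$ denote the cone vertex of $\hat{H}$ (the image of the $K_1$), which is adjacent in $\hat{H}$ to every other vertex of $\hat{H}$. The argument then splits according to the role of the edge $uv$ inside $\hat{H}$. If neither $u$ nor $v$ is the cone, both lie in the $H$-body, so $cu$ and $cv$ are edges of $\hat{H}$; since the only non-$G$ edge of $\hat{H}$ is $uv$ and $cu,cv\neq uv$, both $cu,cv\in E(G)$, so $c\in N_G(u)\cap N_G(v)$ and $d_G(u,v)=2$.

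The remaining, and genuinely crucial, case is when $uv$ is incident to the cone, say $u=c$ and $v$ lies in the $H$-body of $\hat{H}$. Here the hypothesis that $H$ has \emph{no isolated vertex} does the real work: $v$ must have a neighbour $w$ within the $H$-body, so $vw\in E(\hat{H})$, and also $uw\in E(\hat{H})$ because $u$ is the cone. As $vw\neq uv$ and $uw\neq uv$, both edges lie in $G$, giving a common neighbour $w\in N_G(u)\cap N_G(v)$ and again $d_G(u,v)=2$. I expect this last case to be the main obstacle, since it is precisely the point where the no-isolated-vertex assumption cannot be dropped: were $v$ allowed to map to an isolated vertex of $H$, its only neighbour in $\hat{H}$ would be the cone $u$ via the added edge $uv$, and the construction would fail to produce a common neighbour. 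Combining both cases shows every non-adjacent pair is at distance $2$, so together with $G\neq K_n$ we conclude $diam(G)=2$.
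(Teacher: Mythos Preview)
Your proof is correct. Note, however, that the paper does not actually prove this proposition: it is quoted as a known result from \cite{Hu2025} and stated without proof, so there is no in-paper argument to compare against directly. That said, your case analysis---splitting on whether $u,v$ lie in the cone or in the $H$-body of the forced copy, and invoking the no-isolated-vertex hypothesis precisely when one of $u,v$ is the cone---is exactly the template the paper itself uses when it proves the strengthened analogue Proposition~\ref{pro4-1} for $K_2\vee P_k$. So your approach is in full alignment with the methods of the paper.
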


About a graph with diameter $2$, we have an observation as follows.

\begin{obs}\label{obs1}
Let $G$ be a graph with $diam(G)=2$, $v\in V(G)$ and $N_{2}(v)$ denote the set of vertices with distance $2$ from $v$. Then $N(u)\subseteq N(v)\cup N_{2}(v)$ for any $u\in N_{2}(v)$ and $V(G)=N[v]\cup N_{2}(v)$.
\end{obs}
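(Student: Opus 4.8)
The plan is to prove the second assertion first, since the inclusion will then follow almost immediately. For the partition $V(G) = N[v]\cup N_2(v)$, I would simply invoke the hypothesis $diam(G)=2$: every vertex $w\in V(G)$ satisfies $d_G(v,w)\leq 2$. Sorting the vertices by their distance from $v$, a vertex at distance $0$ is $v$ itself, one at distance $1$ lies in $N(v)$, and one at distance $2$ lies in $N_2(v)$. Hence $V(G)=\{v\}\cup N(v)\cup N_2(v)=N[v]\cup N_2(v)$, with the three pieces pairwise disjoint. The only point requiring attention is that the diameter hypothesis genuinely forbids any vertex at distance greater than $2$ from $v$, which is exactly what rules out a fourth distance class and closes the partition.

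For the inclusion $N(u)\subseteq N(v)\cup N_2(v)$ with $u\in N_2(v)$, I would take an arbitrary $w\in N(u)$ and locate it using the partition just established: $w\in N[v]\cup N_2(v)$, that is, $w\in\{v\}\cup N(v)\cup N_2(v)$. It then suffices to exclude the case $w=v$. But $w=v$ would mean $v\in N(u)$, so $u$ and $v$ are adjacent and $d_G(u,v)=1$, contradicting $u\in N_2(v)$. Therefore $w\in N(v)\cup N_2(v)$, and since $w\in N(u)$ was arbitrary, we conclude $N(u)\subseteq N(v)\cup N_2(v)$.

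I do not anticipate any genuine obstacle: the statement is an elementary consequence of the definition of diameter, and the entire argument amounts to bookkeeping of the distance classes from $v$. The only step that needs the slightest care is the exclusion of $w=v$ in the first claim, which rests precisely on the defining property $d_G(u,v)=2$ of membership in $N_2(v)$.
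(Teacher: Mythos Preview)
Your argument is correct. The paper states this as an observation without proof, treating it as immediate from the definition of diameter; your write-up supplies exactly the routine distance-class bookkeeping that the authors evidently deemed unnecessary to spell out.
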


\begin{lem}\label{lem3-4}
Let $G$ be a $3$-regular graph of order $n$. Then either $G\cong H_4$, or $G$ is not $K_{1}\vee P_{4}$-saturated, where $H_4$ is shown in Figure $1$.
\end{lem}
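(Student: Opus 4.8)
The plan is to first reduce the saturation condition to a purely local one. Since $G$ is $3$-regular, no vertex can be the apex of a copy of $K_{1}\vee P_{4}$ (the apex needs degree $4$), so $G$ is automatically $K_{1}\vee P_{4}$-free; hence $G$ is $K_{1}\vee P_{4}$-saturated if and only if $G+xy$ contains a copy for every non-edge $xy\in E(\overline{G})$. In $G+xy$ the only vertices of degree $4$ are $x$ and $y$, so any new copy uses $x$ or $y$ as apex; with $x$ as apex its neighbourhood is exactly $N_{G}(x)\cup\{y\}$, and a copy exists precisely when $G[N_{G}(x)\cup\{y\}]$ has a spanning $P_{4}$. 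I record the elementary fact that a four-vertex graph has a spanning $P_{4}$ iff it is connected and not $K_{1,3}$. Thus saturation is equivalent to: for every non-edge $xy$, at least one of $G[N_{G}(x)\cup\{y\}]$, $G[N_{G}(y)\cup\{x\}]$ is connected and different from $K_{1,3}$.

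Next I would bring in geometry. By Proposition~\ref{pro3-2*} (with $H=P_{4}$, which has no isolated vertex) any $K_{1}\vee P_{4}$-saturated $G$ has $diam(G)=2$; in particular $G$ is connected, and by the Moore bound for degree $3$ and diameter $2$ we get $n\le 10$. As $G$ is cubic $n$ is even, and the complete case $K_{4}$ ($n=4$, diameter $1$) is excluded by $diam(G)=2$, so $n\in\{6,8,10\}$. For each $v$ put $e(v)=|E(G[N(v)])|\in\{0,1,2,3\}$. A short argument rules out large values: $e(v)=3$ makes $\{v\}\cup N(v)$ a $K_{4}$-component, impossible for connected $G$ with $n\ge6$; and $e(v)=2$ makes $N[v]$ a diamond whose two degree-$3$ vertices have all neighbours inside $N[v]$, which together with $diam(G)=2$ and $3$-regularity forces a vertex that cannot reach degree $3$. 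Hence $e(v)\in\{0,1\}$ for all $v$, and every vertex with $e(v)=1$ lies in a unique triangle, these triangles being vertex-disjoint.

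The heart of the proof is to control $Z=\{v:e(v)=0\}$. If $x,y\in Z$ are non-adjacent, then $N(x)$ and $N(y)$ are independent, so both $G[N(x)\cup\{y\}]$ and $G[N(y)\cup\{x\}]$ are stars or disconnected, so the non-edge $xy$ is never completed, contradicting saturation. Therefore $Z$ is a clique, and since a clique of size $\ge3$ would create a triangle (forcing its vertices to have $e\ge1$), we get $|Z|\le2$. I then eliminate $|Z|=1$ and $|Z|=2$ by a parity/diameter count: the $n-|Z|$ vertices of $e$-value $1$ partition into disjoint triangles, so $3\mid(n-|Z|)$, which with $n\in\{6,8,10\}$ leaves only $|Z|=1,\,n=10$ and $|Z|=2,\,n=8$. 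The first is impossible because the unique cubic diameter-$2$ graph on $10$ vertices is the Petersen graph, which is triangle-free (forcing $Z=V(G)$); for the second I would show that the forced configuration, an edge $vw$ with $v,w\in Z$ together with two disjoint triangles attached to $v$ and to $w$ through independent pairs, always contains two vertices with disjoint closed neighbourhoods, hence a pair at distance $\ge3$, contradicting $diam(G)=2$.

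Consequently $Z=\varnothing$, every vertex has $e(v)=1$, and $V(G)$ is partitioned into vertex-disjoint triangles; with $n\le10$, $n$ even and $3\mid n$ this gives $n=6$, i.e.\ exactly two disjoint triangles. Since $G$ is cubic, each vertex sends its one remaining edge to the other triangle, producing a perfect matching between them, and any such graph is isomorphic to the triangular prism $H_{4}$. I expect the main obstacle to be the analysis when $Z\ne\varnothing$, and in particular confirming that the $|Z|=2$, $n=8$ configuration is genuinely forced and always violates $diam(G)=2$; by contrast the reduction to $e(v)\le1$ and the Hamiltonian-path reformulation are routine once the local criterion is established.
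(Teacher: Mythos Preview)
Your argument is correct and reaches the same conclusion, but the route is genuinely different from the paper's. The paper's proof is a bare case split on $n\in\{6,8,10\}$: for each value it fixes a vertex $v$, looks at $|E(G[N(v)])|$, and exhibits by hand a specific non-edge that fails to create a copy of $K_1\vee P_4$ (or, for $n=6$ with the right local picture, recognises the prism). There is no global invariant; each case is dispatched by inspecting a handful of vertices.

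Your approach is more structural. The reformulation ``saturation $\Leftrightarrow$ for every non-edge $xy$ one of $G[N(x)\cup\{y\}]$, $G[N(y)\cup\{x\}]$ is connected and $\ne K_{1,3}$'' is a clean local criterion the paper never states, and the key step---if $e(x)=e(y)=0$ and $xy\notin E(G)$ then neither induced $4$-set can host a $P_4$, forcing the triangle-free set $Z$ to be a clique of size $\le 2$---replaces several ad hoc verifications by one uniform observation. The divisibility $3\mid(n-|Z|)$ then prunes the cases neatly. The trade-offs: you invoke the uniqueness of the Petersen graph to kill $(|Z|,n)=(1,10)$, which is standard but heavier than the paper's two-line direct check; and your $(|Z|,n)=(2,8)$ case, while correct (the forced configuration has $a_1\sim v$, $b_2\sim w$ in different triangles with $N[a_1]\cap N[b_2]=\varnothing$), is only sketched and would benefit from writing out the unique graph explicitly. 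Conversely, your argument explains \emph{why} the prism is the only survivor---the triangle partition together with $3\mid n$ and $n\le 10$ pins down $n=6$---whereas the paper simply enumerates.
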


\begin{proof}
See Appendix A.1.
\end{proof}

Motivated by Theorem \ref{thm2-2}, we obtain the following result, which improves the bounds on $n$ in Theorem \ref{thm2-2}, and characterize the minimal $K_{1}\vee P_{k}$-saturated graphs for $3\leq k\leq 5$.

\begin{thm}\label{thm3-3}
Let $k\geq 3$, $n\geq a_k+1$ and $a_{k}$ defined as $\mathrm{(\ref{eq1.1})}$.
Then
\begin{equation}\label{eq3.1}
sat(n, K_{1}\vee P_{k})=n-1+sat(n-1,P_{k})=
\begin{cases}
\lfloor \frac{3n-3}{2} \rfloor,  & \text{if $k=3$}; \\
\frac{3n-3}{2}, & \text{if $k=4$, $n$ is odd}; \\
\frac{3n}{2}, & \text{if $k=4$, $n$ is even}; \\
\lceil \frac{11n-15}{6} \rceil, & \text{if $k=5$}; \\
2n-2-\lfloor \frac{n-1}{a_{k}} \rfloor, & \text{if $k\geq 6$}.
\end{cases}
\end{equation}
If $G$ is a $K_{1}\vee P_{k}$-saturated graph of order $n$, then $G$ is a minimal $K_{1}\vee P_{k}$-saturated graph if and only if $G\cong K_{1}\vee F$ for $n\geq9$, and $G\in\mathbb{G}$ for $n\leq8$, where $F$ is a minimal $P_{k}$-saturated graph of order $n-1$, $H_i$ $(1\leq i\leq8)$ are shown in Figure $1$, and
\vspace{-0.3cm}
\[\mathbb{G}=
\begin{cases}
\{K_{1}\vee F,H_1,H_2,H_3\},  & \text{if $k=3$}; \\
\{K_{1}\vee F,H_4,H_5,H_6\}, & \text{if $k=4$}; \\
\{K_{1}\vee F,H_7,H_8\}, & \text{if $k=5$}.
\end{cases}\]
\end{thm}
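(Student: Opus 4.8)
The plan is to treat the upper bound, the lower bound, and the characterization of extremal graphs in turn, since the upper bound and the closed-form values are essentially immediate and all the difficulty lies in the lower bound. The inequality $sat(n,K_{1}\vee P_{k})\le n-1+sat(n-1,P_{k})$ is exactly Corollary~\ref{cor2-4} applied with $H=P_{k}$. The explicit expressions on the right of \eqref{eq3.1} are then obtained by substituting the value of $sat(n-1,P_{k})$ from Theorem~\ref{thm2-1} (with $n-1$ in place of $n$) and simplifying; for instance $n-1+\lfloor(n-1)/2\rfloor=\lfloor(3n-3)/2\rfloor$ for $k=3$, and the remaining cases are analogous. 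These simplifications are routine, so the substance of the theorem is the matching lower bound together with the identification of the extremal graphs.

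For the lower bound, let $G$ be any $K_{1}\vee P_{k}$-saturated graph of order $n\ge a_{k}+1$. Since $P_{k}$ has no isolated vertex, Proposition~\ref{pro3-2*} gives $diam(G)=2$. I would split on the maximum degree. If $\Delta(G)=n-1$, then Proposition~\ref{pro3-1*} yields $|E(G)|\ge n-1+sat(n-1,P_{k})$ at once, with equality precisely when $G\cong K_{1}\vee F$ for a minimal $P_{k}$-saturated graph $F$ of order $n-1$; this accounts for the generic extremal family. The whole difficulty is therefore concentrated in the case $\Delta(G)\le n-2$, where $G$ has no conical vertex, and there I must show that $|E(G)|>n-1+sat(n-1,P_{k})$ unless $n$ is small and $G$ is one of the sporadic graphs listed in $\mathbb{G}$.

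In the no-conical-vertex case I would fix a vertex $v$ of maximum degree and set $A=N(v)$, $B=N_{2}(v)$, so that by Observation~\ref{obs1} we have $V(G)=\{v\}\cup A\cup B$ with $B\ne\emptyset$ and every vertex of $B$ having all its neighbours inside $A\cup B$. The saturation hypothesis is then exploited through the non-edges incident to $v$: for each $u\in B$ the graph $G+uv$ contains a copy $\hat H$ of $K_{1}\vee P_{k}$ through the edge $uv$, and distinguishing whether $v$ serves as the apex of $\hat H$ or as a path-vertex forces either a long path inside $N(v)$ or a second high-degree vertex adjacent to $u$. Writing $|E(G)|=\Delta+|E(G-v)|$, the goal is to show that the structure forced on $A\cup B$ by the $|B|=n-1-\Delta$ vertices at distance $2$ from $v$ produces a surplus of edges, over what a minimal $P_{k}$-saturated configuration on these $n-1$ vertices would require, that strictly exceeds the deficiency $n-1-\Delta$ coming from $v$ not being conical. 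This is the mechanism that pushes the total above $n-1+sat(n-1,P_{k})$.

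The main obstacle is precisely this edge-counting step, because for small $k$ and small $n$ the surplus can shrink to zero. For $k\ge6$ the parameter $a_{k}\ge10$ is large enough that any vertex of $B$ already forces enough incident edges to beat the bound for all $n\ge a_{k}+1$ (note $a_{k}+1\ge11>9$), so no sporadic graphs arise and $G\cong K_{1}\vee F$ throughout. For $k\in\{3,4,5\}$ the margins are tight and a finite case analysis is unavoidable: here $diam(G)=2$ together with $\Delta(G)\le n-2$ severely constrains $G$, and I would enumerate the surviving configurations directly, invoking Lemma~\ref{lem3-4} to eliminate the $3$-regular case when $k=4$. This analysis isolates exactly the graphs $H_{1},\dots,H_{8}$ (all of order at most $8$), after which a direct verification confirms that each is $K_{1}\vee P_{k}$-saturated with the claimed edge count, while for $n\ge9$ no extremal graph lacking a conical vertex remains. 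Combining the two cases gives both the value of $sat(n,K_{1}\vee P_{k})$ and the list $\mathbb{G}$.
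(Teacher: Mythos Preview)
Your framework---upper bound via Corollary~\ref{cor2-4}, then splitting on $\Delta(G)=n-1$ with Proposition~\ref{pro3-1*}---matches the paper. But you overlook that Theorem~\ref{thm2-2} already delivers both the value of $sat(n,K_1\vee P_k)$ and the full characterization of extremal graphs for all $k\ge6$ and for $k=5$ with $n\ge7$; the paper invokes this immediately and only actually treats $k\in\{3,4\}$ together with the single case $k=5$, $n=6$. Your sketch for $k\ge6$ is thus redundant, and the unproven assertion that ``any vertex of $B$ already forces enough incident edges'' would in any case need a real argument. In the no-conical-vertex case your organising choice also differs from the paper's: you anchor at a vertex of \emph{maximum} degree, whereas the paper takes $v$ of \emph{minimum} degree. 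Since $diam(G)=2$ forces $\delta(G)\ge2$, and $\delta(G)\ge3$ already gives $|E(G)|\ge 3n/2$ (with Lemma~\ref{lem3-4} disposing of the $3$-regular equality case when $k=4$), the substance lies in $\delta(G)=2$, where $N(v)=\{v_1,v_2\}$; the paper partitions $U=V(G)\setminus N[v]$ into $X,Y,Z$ according to which of $v_1,v_2$ each vertex sees and evaluates $2|E(G)|=\sum_{u\in N[v]}d(u)+\sum_{u\in U}d(u)$ case by case.

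Your proposed mechanism---writing $|E(G)|=\Delta+|E(G-v)|$ and arguing that the ``surplus'' of $|E(G-v)|$ over $sat(n-1,P_k)$ exceeds the deficiency $n-1-\Delta$---has a genuine gap. The graph $G-v$ is in general neither $P_k$-saturated nor $P_k$-free: nothing prevents $G$ from containing a copy of $P_k$, provided no single vertex dominates it. Hence $sat(n-1,P_k)$ is not an a priori lower bound for $|E(G-v)|$, and the comparison you propose has no direct meaning. The heuristic ``forces either a long path inside $N(v)$ or a second high-degree vertex adjacent to $u$'' is plausible but does not by itself produce the required inequality; turning it into a proof would amount to a case analysis of its own, and it is not clear it would be shorter than the paper's. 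The min-degree approach sidesteps this entirely: with only two neighbours of $v$, the structural constraints from saturation on the non-edges $vv_i$-free become explicit degree inequalities, and the sporadic graphs $H_1,\dots,H_6$ (and the impossibility for larger $n$) fall out of a finite, concrete count rather than a comparison with an unrelated saturation problem.
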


\begin{proof}
See Appendix A.2.
\end{proof}
%\begin{thm}\label{thm3-3}
%Let $3\leq k\leq 5$ and $n\geq k+1$. Then
%\begin{equation}\label{eq3.1}
%sat(n, K_{1}\vee P_{k})=n-1+sat(n-1,P_{k})=
%\begin{cases}
%\lfloor \frac{3n-3}{2} \rfloor,  & \text{if $k=3$}; \\
%\frac{3n-3}{2}, & \text{if $k=4$, $n$ is odd}; \\
%\frac{3n}{2}, & \text{if $k=4$, $n$ is even}; \\
%\lceil \frac{11n-15}{6} \rceil, & \text{if $k=5$}.
%\end{cases}
%\end{equation}
%If $G$ is a $K_{1}\vee P_{k}$-saturated graph of order $n$, then $G$ is a minimal $K_{1}\vee P_{k}$-saturated graph if and only if $G\in\mathcal{G}$, where $F$ is a minimal $P_{k}$-saturated graph of order $n-1$, $G_i$ $(1\leq i\leq10)$ is shown in Figures $1$-$3$, and
%\vspace{-0.3cm}
%\[\mathcal{G}=
%\begin{cases}
%\{K_{1}\vee F, H_1, H_2, H_3,H_4\},  & \text{if $k=3$}, \\
%\{K_{1}\vee F, H_4, H_5, H_7, H_6\}, & \text{if $k=4$}, \\
%\{K_{1}\vee F,H_7, H_8\}, & \text{if $k=5$}.
%\end{cases}\]
%\end{thm}

%\begin{thm}\label{thm3-2}
%Let $a_{k}$ defined as (\ref{eq1.1}), $k\geq3$ and $n\geq a_k+1$. Then
%$sat(n, K_{1}\vee P_{k})=n-1+sat(n-1,P_{k})$.
%\end{thm}

\section{The proof of Theorem \ref{thm1-1}}
In this section, we prove Theorem \ref{thm1-1}.
Before the proof, we introduce some notation that will be used later.
For convenience, we refer to $K_{2}$  and $P_{k}$ as the center and path part of $K_{2}\vee P_{k}$, respectively, and use $C(K_{2}\vee P_{k})$, $PP(K_{2}\vee P_{k})$ to denote the vertices in center, path part of $K_{2}\vee P_{k}$ correspondingly.

By Proposition \ref{pro3-2*}, the diameter of a $K_{2}\vee P_{k}$-saturated graph is $2$. Next, we present a stronger result.
%By the previous definition, we know that a $K_{2}\vee P_{k}$-saturated graph is not complete, and thus its diameter is greater than 1. Next, we give the following result, which also implies that the diameter of a $K_{2}\vee P_{k}$-saturated graph is $2$.

\begin{prop}\label{pro4-1}
Let $G$ be a $K_{2}\vee P_{k}$-saturated graph with $k\geq3$, $u,v\in V(G)$ and $uv\notin E(G)$. Then there exist at least two paths with length $2$ between $u$ and $v$ in $G$, and thus $diam(G)=2$.
\end{prop}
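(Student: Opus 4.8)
The plan is to prove the stronger statement that any two non-adjacent vertices $u,v$ have at least two common neighbours in $G$; since a common neighbour of $u$ and $v$, together with its two edges to $u$ and $v$, is precisely a path of length $2$ joining them, this yields the two required paths and forces $d_G(u,v)=2$ for every non-adjacent pair, hence $diam(G)=2$. (Here $G\neq K_n$, since $K_n$ contains $K_2\vee P_k$ for $n\geq k+2$, so a non-adjacent pair does exist.)

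First I would invoke saturation: since $uv\notin E(G)$ and $G$ is $K_2\vee P_k$-saturated, $G+uv$ contains a copy $\hat{H}\cong K_2\vee P_k$, and this copy must use the new edge $uv$, for otherwise $\hat{H}\subseteq G$, contradicting that $G$ is $K_2\vee P_k$-free. Write $C(\hat{H})=\{x_1,x_2\}$ for the two center vertices and $p_1p_2\cdots p_k$ for the path part of $\hat{H}$; crucially, every edge of $\hat{H}$ other than $uv$ already lies in $G$. The endpoints $u,v$ then fall into one of three types according to their roles in $\hat{H}$, and I would treat these separately.

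The three cases run as follows. If $uv$ is the center edge, so $\{u,v\}=\{x_1,x_2\}$, then all path vertices $p_1,\dots,p_k$ are common neighbours of $u$ and $v$ in $G$, and $k\geq 3$ gives at least three. If $uv$ is a center--path edge, say $u=x_1$ and $v=p_i$, then the other center $x_2$ satisfies $x_2u,x_2v\in E(G)$, while any path-neighbour $p_j$ of $p_i$ satisfies $x_1p_j=up_j\in E(G)$ and $p_jp_i=p_jv\in E(G)$; thus $x_2$ and $p_j$ are two common neighbours. Finally, if $uv$ is a path edge, so $\{u,v\}=\{p_i,p_{i+1}\}$ are consecutive on the path, then both centers $x_1,x_2$ are adjacent in $G$ to each of $u$ and $v$, giving the two common neighbours $x_1,x_2$.

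I expect the only point needing care to be the center--path case: one must check that the two exhibited common neighbours are genuinely distinct (immediate here, as a center vertex and a path vertex cannot coincide) and that $p_i$ really possesses a path-neighbour $p_j$, which holds because $P_k$ with $k\geq 3$ has no isolated vertex. Every incident edge used in the argument is distinct from $uv$ and therefore present in $G$, so the conclusion depends on the added edge only through the existence of the copy $\hat{H}$.
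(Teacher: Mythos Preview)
Your proof is correct and follows essentially the same route as the paper's: both take a copy $Q$ of $K_2\vee P_k$ in $G+uv$ containing the edge $uv$, split into the three cases according to whether $\{u,v\}$ is the center pair, a center--path pair, or a path pair, and exhibit precisely the same common neighbours in each case (all path vertices; the other center together with a path-neighbour of $v$; the two centers). Your write-up is slightly more explicit about why each exhibited edge differs from $uv$ and hence already lies in $G$, but the argument is identical in substance.
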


\begin{proof}
Since $G$ is a $K_{2}\vee P_{k}$-saturated graph, $G+uv$ contains a copy of $K_{2}\vee P_{k}$, say $Q$.

If $u,v\in C(Q)$, then $uxv$ forms a path with length $2$ between $u$ and $v$, where $x\in PP(Q)$, and thus there are at least $k$ paths with length $2$ between $u$ and $v$ in $G$.

If $u\in C(Q),v\in PP(Q)$, then $uwv$ and $uxv$ are two paths with length $2$ between $u$ and $v$ in $G$, where $w\in C(Q)$, $x\in PP(Q)\cap N(v)$.

If $u,v\in PP(Q)$, then $uw_1v$ and $uw_2v$ are two paths with length $2$ between $u$ and $v$ in $G$, where $w_1,w_2\in C(Q)$.

Therefore, there exist at least two paths with length $2$ between $u$ and $v$ in $G$ for any $uv\notin E(G)$, and thus $diam(G)=2$.
\end{proof}

By the proof of Proposition \ref{pro4-1}, the following result is obtained immediately.

\begin{cor}\label{cor4-2}
Let $G$ be a $K_{2}\vee P_{k}$-saturated graph with $k\geq3$, $u,v\in V(G)$, $uv\notin E(G)$, and $Q$ be a copy of $K_2\vee P_{k}$ in $G+uv$. Then $C(Q)\subseteq \{u,v\}\cup (N(u)\cap N(v))$.
\end{cor}

\noindent\textbf{\textit{Proof of Theorem \ref{thm1-1}}.}
Let $F$ be a minimal $P_{k}$-saturated graph of order $n-2$ and $G\in\mathcal{G}$. Then $K_2\vee F$ and $G$ are $K_{2}\vee P_{k}$-saturated graphs for $k\geq3$ by Lemma \ref{lem2-3} and direct checking.

Let $G$ be a $K_{2}\vee P_{k}$-saturated graph with $k\geq3$ of order $n$.
In order to show (\ref{eq1.2}), by Theorem \ref{thm2-1}, we only need to show
\begin{equation}\label{eq4.3}
|E(G)|\geq 2n-3+sat(n-2,P_{k}).
\end{equation}

Select a vertex $v\in V(G)$ with $d_{G}(v)=\delta(G)$ such that $|E(G[N(v)])|$ is minimized. Let $N(v)=\{v_{1}, v_{2},\dots,v_{\delta(G)}\}$ and $U=V(G)\backslash N[v]$.
For any $u\in U$, we know $uv\notin E(G)$, thus $|N(u)\cap N(v)|\geq 2$ by Proposition \ref{pro4-1}.
Let
\begin{itemize}[leftmargin=*]
    \item $U_{i} = \left\{ u \in U \mid |N(u) \cap N(v)| = i \right\}$ for $2 \leq i \leq \delta(G)$;
    \item $X_{t_1t_2\cdots t_i}= \left\{x\mid x\in U_i\cap N(v_{t_1})\cap N(v_{t_2})\cap\cdots\cap N(v_{t_i})\right\}$, where $\{t_1,t_2,\dots, t_i\}\subseteq\{1,2,\dots,\delta(G)\}$.
\end{itemize}
Then $U_{i}=\bigcup_{\{t_1,t_2,\dots,t_i\}\subseteq\{1,2,\dots,\delta(G)\}}X_{t_1t_2\cdots t_i}$, $\bigl| E\bigl[N[v],U\bigr] \bigr|=\sum_{i=2}^{\delta(G)}i|U_i|$, and

\begin{align}
2|E(G)|
&=\sum_{u\in N[v]}d_{G}(u)+\sum_{u\in U}d_{G}(u)
\nonumber \\
&=\left(2\bigl| E(G\bigl[N[v]\bigr]) \bigr|+\bigl| E\bigl[N[v],U\bigr] \bigr|\right)+\left(\bigl| E\bigl[N[v],U\bigr] \bigr|+2\bigl|E(G[U])\bigr|\right)
\nonumber \\
&=2\bigl| E(G\bigl[N[v]\bigr]) \bigr|+2\sum_{i=2}^{\delta(G)}i|U_i|+2\bigl|E(G[U])\bigr|,
\label{eq4.1}
\\[6pt]
|E(G)|
&= \bigl| E(G\bigl[N[v]\bigr]) \bigr| + \sum_{i=2}^{\delta(G)}i|U_i| + \frac{1}{2} \sum_{u \in U} d_{G[U]}(u).
\label{eq4.2}
\end{align}

Since (\ref{eq4.1}) and (\ref{eq4.2}) will be used many times in the proof, we provide some examples to illustrate the use of both equations. Let $\delta_{U}=\min\{d_{G}(u)\mid u\in U\}$. By (\ref{eq4.1}) and (\ref{eq4.2}), we can obtain
\begin{itemize}[leftmargin=*]
    \item $2|E(G)|\geq 2\left(\delta(G)+|E(G[N(v)])|\right)+\sum_{i=2}^{\delta(G)}i|U_i|+\delta_{U}(n-\delta(G)-1)$;
   % \item $2|E(G)|\geq 2(\delta(G)+|E(G[N(v)])|)+\sum_{i=2}^{\delta(G)}i|U_i|+d_{G}(u)+\delta_{U}(n-\delta(G)-2)$, where $u\in U$;
\end{itemize}
\begin{itemize}[leftmargin=*]
\item
$\begin{aligned}[t]
|E(G)|
&\geq \delta(G) + |E(G[N(v)])| + \sum_{i=2}^{\delta(G)} i|U_i| + \frac{1}{2}\sum_{i=2}^{\delta(G)}(\delta_{U_i}-i)|U_i|\\
&\geq \delta(G) + |E(G[N(v)])| + \sum_{i=2}^{\delta(G)} i|U_i| + \frac{1}{2}\sum_{i=2}^{\delta(G)}(\delta(G)-i)|U_i|;
\end{aligned}$
\end{itemize}
\begin{itemize}[leftmargin=*]
    \item For $\{w_1,\ldots,w_t\}\subseteq U$, if $\sum_{i=1}^{t}d_{G}(w_i)>t\delta_{U}$, then $\sum_{u\in U}d_{G}(u)\geq \sum_{i=1}^{t}d_{G}(w_i)+\delta_{U}(n-\delta(G)-1-t)$.
\end{itemize}
%\begin{align}
%2|E(G)|
%&=\sum_{u\in N[v]}d(u)+\sum_{u\in U}d(u)
%\nonumber \\
%&=(2\bigl| E(G\bigl[N[v]\bigr]) \bigr|+\bigl| E\bigl[N[v],U\bigr] \bigr|)+(\bigl| E\bigl[N[v],U\bigr] \bigr|+2\bigl|E(G[U])\bigr|)
%\nonumber \\
%&=2\bigl| E(G\bigl[N[v]\bigr]) \bigr|+2\sum_{i=2}^{\delta(G)}i|U_i|+2\bigl|E(G[U])\bigr|.
%\label{eq4.1}
%\end{align}
%
%\vspace{-1cm}
%\begin{align}
%|E(G)|
%%&= \bigl| E\bigl[N[v]\bigr] \bigr| + \bigl| E\bigl[N[v],U\bigr] \bigr| + |E(G[U])| \nonumber \\
%&= \bigl| E(G\bigl[N[v]\bigr]) \bigr| + \sum_{i=2}^{\delta(G)}i|U_i| + \frac{1}{2} \sum_{u \in U} d_{G[U]}(u).
%\label{eq4.2}
%\end{align}

\noindent\textbf{Claim 1.}
Let $2\leq i\leq \delta(G)$ and $\{t_1,t_2,\dots,t_i\}\subseteq\{1,2,\dots,\delta(G)\}$.
If $\{v_{t_1},v_{t_2},\dots,v_{t_i}\}$ is an independent set of $G$, then $X_{t_1t_2\cdots t_i}=\emptyset$.

\noindent\textbf{\textit{Proof of Claim 1}.} Suppose that $x\in X_{t_1t_2\cdots t_i}\neq\emptyset$. Since $vx\notin E(G)$, $G+vx$ contains a copy of $K_{2}\vee P_{k}$, say $Q$, and $C(Q)\subset \{v,x,v_{t_1},v_{t_2},\ldots,v_{t_i}\}$ by Corollary \ref{cor4-2}. Clearly, $C(Q)\nsubseteq\{v_{t_1},v_{t_2},\ldots,v_{t_i}\}$.
We divide into the following cases.

$\mathbf{Case~1:}$ $C(Q)=\{v,x\}$.

Then $PP(Q)\subseteq N(v)\cap N(x)=\{v_{t_1},v_{t_2},\ldots,v_{t_i}\}$. However, $\{v_{t_1},v_{t_2},\dots,v_{t_i}\}$ is an independent set of $G$, a contradiction.

$\mathbf{Case~2:}$ $C(Q)=\{v,v_{t_j}\}$, where $1\leq j\leq i$.

Then $x\in PP(Q)$ and $PP(Q)\subseteq\{x\}\cup (N(v)\cap N(v_{t_j}))$. On the other hand, $N(v)\cap N(v_{t_j})\subseteq N(v)\backslash\{v_{t_1},v_{t_2},\dots,v_{t_i}\}$ since $\{v_{t_1},v_{t_2},\dots,v_{t_i}\}$ is an independent set of $G$, then $N(x)\cap (N(v)\cap N(v_{t_j}))=\emptyset$, contradicting the fact that $G[\{x\}\cup (N(v)\cap N(v_{t_j}))]$ contains a copy of $P_k$.

$\mathbf{Case~3:}$ $C(Q)=\{x,v_{t_j}\}$, where $1\leq j\leq i$.

Then $PP(Q)\subseteq\{v\}\cup (N(x)\cap N(v_{t_j}))$. But $N(v)\cap N(x)\cap N(v_{t_j})=\emptyset$ since $\{v_{t_1},v_{t_2},\dots,v_{t_i}\}$ is an independent set of $G$, a contradiction.

Therefore, $X_{t_1t_2\cdots t_i}=\emptyset$. This completes the proof.
{\hfill $\square$ \par}

\noindent\textbf{Claim 2.}
Let $k\geq3$, $d_{G}(v)=\delta(G)\leq 5$, $x,y\in N(v)$, $xy\notin E(G)$, $N(x)\cap N(y)\cap N(v)= \emptyset$ and $Q$ be a copy of $K_2\vee P_{k}$ in $G+xy$. Then $v\notin V(Q)$ and $|N_{Q}(x)\cap N_{Q}(y)\cap U|\geq 2$.

\noindent\textbf{\textit{Proof of Claim 2}.}
We have $C(Q)\subseteq \{x,y\}\cup (N(x)\cap N(y))$ by Corollary \ref{cor4-2}.
Firstly, we show $v\notin C(Q)$. If not, then $C(Q)=\{v,x\}$ or $\{v,y\}$. When $C(Q)=\{v,x\}$, then $y\in PP(Q)$. However, $N(x)\cap N(y)\cap N(v)= \emptyset$, this implies $y\notin PP(Q)$, a contradiction. Similarly, $C(Q)\neq\{v,y\}$. Clearly, $v\notin PP(Q)$ by $N(x)\cap N(y)\cap N(v)=\emptyset$. Thus $v\notin V(Q)$, and $C(Q)\subseteq \{x,y\}\cup (N(x)\cap N(y)\cap U)$.
%Clearly, $v\notin V(Q)$ by $d_{G}(v)\leq 5$, $k\geq6$, and $N(x)\cap N(y)\cap N(v)=\emptyset$. We have $C(Q)\subseteq \{x,y\}\cup (N(x)\cap N(y))\subseteq \{x,y\}\cup U$ by Corollary \ref{cor4-2} and $N(x)\cap N(y)\cap N(v)=\emptyset$.

If $C(Q)=\{x,y\}$, then $PP(Q)\subseteq U$, and $|N_{Q}(x)\cap N_{Q}(y)\cap U|=|PP(Q)|=k>2$.

If $|C(Q)\cap U|=1$, without loss of generality, let $C(Q)=\{x,w\}$ with $w\in U$.
Then $y\in PP(Q)$ and $PP(Q)\subset \{y\}\cup(N(x)\cap N(w))$. We take $z\in N(y)\cap PP(Q)$. Then $z\in U$ by $N(x)\cap N(y)\cap N(v)=\emptyset$, and thus $\{w,z\}\subseteq N_{Q}(x)\cap N_{Q}(y)\cap U$, which implies $|N_{Q}(x)\cap N_{Q}(y)\cap U|\geq 2$.

If $|C(Q)\cap U|=2$, then $\{x,y\}\subset PP(Q)$, $C(Q)\subseteq N_{Q}(x)\cap N_{Q}(y)\cap U$, and thus $|N_{Q}(x)\cap N_{Q}(y)\cap U|\geq 2$.
{\hfill $\square$ \par}

Now we show (\ref{eq4.3}) holds based on the value of $\Delta(G)$.

If $\Delta(G)=n-1$, then $|E(G)|\geq n-1+sat(n-1,K_1\vee P_k)=2n-3+sat(n-2,P_k)$ by Proposition \ref{pro3-1*}, $K_2\vee P_k=K_1\vee(K_1\vee P_k)$ and Theorem \ref{thm3-3}, (\ref{eq4.3}) holds, with the equality if and only if $G\cong K_1\vee F'$, where $F'$ is a minimal $K_{1}\vee P_{k}$-saturated graph of order $n-1$.
Furthermore, by Theorem \ref{thm3-3}, the equality in (\ref{eq4.3}) holds if and only if $G\cong K_{2}\vee F$ for $n\geq10$, and $G\cong K_1\vee G^*$ for $n\leq9$, where $F$ is a minimal $P_{k}$-saturated graph of order $n-2$ and $G^*\in\mathbb{G}$.

%$G\in\mathcal{G^*}$, where $\mathcal{G^*}=\{K_{2}\vee F,K_1\vee H_1,K_1\vee H_2,K_1\vee H_3\}$ for $k=3$, $\mathcal{G^*}=\{K_{2}\vee F,K_1\vee H_4,K_1\vee H_5,K_1\vee H_6\}$ for $k=4$, $\mathcal{G^*}=\{K_{2}\vee F,K_1\vee H_7,K_1\vee H_8\}$ for $k=5$, $\mathcal{G^*}=\{K_{2}\vee F\}$ for $k\geq 6$, and $F$ is a minimal $P_{k}$-saturated graph of order $n-2$.

In the rest of the proof, we assume $\Delta(G)\leq n-2$. Since $diam(G)=2$, we have $\delta(G)\geq 2$.
Clearly, $|E(G[N(v)])|\neq 0$. Otherwise, $U=\emptyset$ by Claim $1$, which implies $G\cong K_{1,n-1}$, a contradiction with $\Delta(G)\leq n-2$.

If $\delta(G)=2$, then $v_1v_2\in E(G)$ by $|E(G[N(v)])|\neq 0$, and thus $\Delta(G)=d_{G}(v_1)=d_{G}(v_2)=n-1$ by Proposition \ref{pro4-1}, a contradiction. Thus $3\leq \delta(G)\leq\Delta(G)\leq n-2$.

\noindent\textbf{Claim 3.}
Let $k\geq3$, $3\leq\delta(G)=d_{G}(v)\leq\Delta(G)\leq n-2$ and $|E(G[N(v)])|=\delta(G)-1$. Then $G[N(v)]\not\cong K_{1,\delta(G)-1}$.

\noindent\textbf{\textit{Proof of Claim 3}.}
If $G[N(v)]\cong K_{1,\delta(G)-1}$, without loss of generality, we assume $v_1v_2,v_1v_3,\ldots,\\v_1v_\delta(G)\in E(G)$. By Claim $1$, we have
$U_{i}=\bigcup_{\{t_1,t_2,\dots,t_{i-1}\}\subseteq\{2,\dots,\delta(G)\}}X_{1t_1\cdots t_{i-1}}$ for $i\geq2$, and thus $uv_1\in E(G)$ for any $u\in U$, which implies $d_{G}(v_1)=n-1$, a contradiction.
{\hfill $\square$ \par}

\noindent\textbf{Claim 4.}
Let $k\geq3$, $3=\delta(G)=d_{G}(v)\leq\Delta(G)\leq n-2$. Then $|E(G[N(v)])|\neq2$.

\noindent\textbf{\textit{Proof of Claim 4}.}
If $|E(G[N(v)])|=2$, then $G[N(v)]\cong K_{1,2}$ by $d_{G}(v)=3$, a contradiction with Claim $3$.
{\hfill $\square$ \par}

In the rest, we divide the remaining proof into four parts based on the value of $k$.

\subsection{$\boldsymbol{k\geq 6}$ with $\boldsymbol{3\leq \delta(G)\leq\Delta(G)\leq n-2}$}
In this part, we only show $|E(G)|\geq 3n-5$, which implies that (\ref{eq4.3}) holds strictly.

$\mathbf{Case~1:}$ $\delta(G)\geq 6$.

Then $|E(G)|\geq \frac{n\delta(G)}{2}\geq3n$, (\ref{eq4.3}) holds.

$\mathbf{Case~2:}$ $\delta(G)=3$.

Clearly, $|E(G[N(v)])|=1$ or $3$ by Claim $4$.

$\mathbf{Subcase~2.1:}$ $|E(G[N(v)])|=1$.

\noindent\textbf{Claim 5.}
Let $k\geq4$, $\delta(G)=3\leq\Delta(G)\leq n-2$ and $|E(G[N(v)])|=1$. Then (\ref{eq4.3}) holds.

\noindent\textbf{\textit{Proof of Claim 5}.}
Without loss of generality, let $v_1v_2\in E(G)$. Then $\{v_1,v_3\}$ and $\{v_2,v_3\}$ are independent sets of $G$, and thus $X_{13}=X_{23}=\emptyset$,
$V(G)=N[v]\cup X_{12}\cup X_{123}$, $|X_{123}|\geq 2$ by $d_{G}(v_3)\geq \delta(G)=3$.

For any $y\in X_{123}$, $G+vy$ must contain a copy of $K_{2}\vee P_{k}$, say $Q$. Then $C(Q)\subset\{v,y,v_1,v_2,v_3\}$ by Corollary \ref{cor4-2}. Clearly, $v\notin C(Q)$ by $d_{G+vy}(v)=4<k+1$, then $v\in PP(Q)$. If $v_3\in C(Q)$, then $C(Q)=\{v_3,y\}$ by $v_3v_1,v_3v_2\notin E(G)$, and thus $v\notin PP(Q)$ by $N(v)\cap N(v_3)\cap N(y)=\emptyset$, a contradiction.
Then $C(Q)=\{y,v_i\}$ for $i\in\{1,2\}$ or $C(Q)=\{v_1,v_2\}$. If $C(Q)=\{y,v_i\}$, where $i\in\{1,2\}$, then $d_{Q}(y)=k+1$, $d_{G}(y)\geq k\geq4$ and $G[U]$ contains a copy of $K_1\vee P_{k-2}$. If $C(Q)=\{v_1,v_2\}$, we have $v_3\notin PP(Q)$, $G[U]$ contains a copy of $P_{k-1}$, $d_{Q}(v)=3$, $d_{Q}(y)=4$, which implies $d_{G}(y)\geq (d_{Q}(y)-1)+1=4$. Therefore, we have $d_{G}(y)\geq 4$ for any $y\in X_{123}$.

If $X_{12}=\emptyset$, then $V(G)=N[v]\cup X_{123}$ and $U=X_{123}$.
Since $G[X_{123}]$ contains a copy of $K_1\vee P_{k-2}$ or $P_{k-1}$, we have $|E(G[X_{123}])|\geq |E(P_{k-1})|=k-2$, and by (\ref{eq4.2}),
$|E(G)|\geq |E(G[N[v]])|+3(n-4)+k-2=3n+k-10$.

If $X_{12}\neq \emptyset$, for any $x\in X_{12}$, we have $xv_3\notin E(G)$, then $d_{G}(x)\geq 4$ since there exist at least two paths with length $2$ between $v_3$ and $x$ by Proposition \ref{pro4-1}, and $G+v_{3}x$ contains a copy of $K_{2}\vee P_{k}$, say $Q'$, we have $C(Q')\subset\{v_3,x\}\cup (N(v_3)\cap N(x))$ by Corollary \ref{cor4-2}.

Now we show $v_3\in PP(Q')$. In fact, if $v_3\in C(Q')$, then $C(Q')=\{v_3,x\}$, or $\{v_3,y\}$, where $y\in N(v_3)\cap N(x)\subseteq X_{123}$.
When $C(Q')=\{v_3,x\}$, then $PP(Q')\subseteq X_{123}$, which implies $G[PP(Q')\cup\{v_1,v_2\}]$ and thus $G$ contains a copy of $K_{2}\vee P_{k}$, a contradiction.
When $C(Q')=\{v_3,y\}$, similar to the proof of $C(Q')=\{v_3,x\}$, we can obtain a contradiction.

Therefore, $v_3\in PP(Q')$ and $C(Q')\subset\{x\}\cup (N(v_3)\cap N(x))$. Let $C(Q')=\{w_1,w_2\}$. Then $d_{G}(w_1)+d_{G}(w_2)\geq 2k+1$. By (\ref{eq4.1}), we have
$2|E(G)|\geq 2|E(G[N[v]])|+2|X_{12}|+3|X_{123}|+2k+1+4(n-6)=6n+2k-23+|X_{123}|\geq 6n+2k-21$.

Combining above arguments, we have $|E(G)|>3n-5$ for $k\geq6$, $|E(G)|\geq 3n-5>\lceil \frac{17n-32}{6} \rceil$ for $k=5$, $|E(G)|\geq 3n-6\geq\frac{5n-5}{2}$ for $k=4$, then (\ref{eq4.3}) holds.
{\hfill $\square$ \par}

$\mathbf{Subcase~2.2:}$ $|E(G[N(v)])|=3$.

Then $U=X_{12}\cup X_{13}\cup X_{23}\cup X_{123}$ with $U_2=X_{12}\cup X_{13}\cup X_{23}$ and $U_3=X_{123}$. Thus
$X_{12}\neq\emptyset$, $X_{13}\neq\emptyset$ and $X_{23}\neq\emptyset$ by $\Delta(G)\leq n-2$.

\noindent\textbf{Claim 6.}
Let $k\geq3$, $\delta(G)=3\leq\Delta(G)\leq n-2$ and $|E(G[N(v)])|=3$. If $X_{123}=\emptyset$ or $d_{G}(u)=3$ for any $u\in X_{123}$, then $G[U_2]$ is connected.

\noindent\textbf{\textit{Proof of Claim 6}.}
Since $|E(G[N(v)])|=3$, we have $X_{ij}\neq\emptyset$ for $1\leq i<j\leq3$.
Let $x,y\in U_2$.

If $x\in X_{12}$ and $y\in X_{13}$, there exist at least two paths with length $2$ between $x$ and $y$ in $G$ by Proposition \ref{pro4-1}. In fact, except for the path $xv_1y$, there exists a path between $x$ and $y$ in $G[U_2]$ by $X_{123}=\emptyset$ or $d_{G}(u)=3$ for any $u\in X_{123}$.

If $x,y\in X_{12}$, by above arguments, there exist a path with length $2$ between $x$ and $z$, and a path with length $2$ between $z$ and $y$ in $G[U_2]$, where $z\in X_{13}$, and thus there exists a path connecting $x$ and $y$ in $G[U_2]$.

For other cases, the proofs are similar, and we omit them. Therefore, $G[U_2]$ is connected. In fact, if $X_{123}=\emptyset$, we have $U=U_2$, and $G[U]$ is connected.
{\hfill $\square$ \par}

$\mathbf{Subcase~2.2.1:}$ $X_{123}=\emptyset$.

Then $V(G)=N[v]\cup U$ with $U=U_2=X_{12}\cup X_{13}\cup X_{23}$.
For any $w\in U_2$, $G+vw$ contains a copy of $K_{2}\vee P_{k}$, say $Q_w$, we have $C(Q_w)\subset\{v,w\}\cup(N(v)\cap N(w))$ by Corollary \ref{cor4-2}. Clearly, $v\notin C(Q_w)$ by $d_{G}(v)=3$, and $v\in PP(Q_w)$.

If $w\in C(Q_w)$, then $C(Q_w)\subset\{w\}\cup N(v)$, $v\in PP(Q_w)$ and $N(v)\cap PP(Q_w)\neq \emptyset$, and thus $G[U]$ contains a copy of $K_{1}\vee P_{k-2}$.
It follows that $|E(G[U])|\geq |U|-1+k-3=n+k-8$ by Claim $6$, and thus $|E(G)|\geq3n-4$ by (\ref{eq4.2}), (\ref{eq4.3}) holds.

If $w\notin C(Q_w)$, then $C(Q_w)\subseteq N(v)\cap N(w)\subset \{v_1,v_2,v_3\}$ and $\{v,w\}\subset PP(Q_w)$. Now we show $G[X_{12}]$, $G[X_{13}]$ and $G[X_{23}]$ all contain cycles.

For $w\in X_{12}$, we have $C(Q_w)= \{v_1,v_2\}$ by $wv_3\notin E(G)$ and $\{v,w\}\subset PP(Q_w)$, then there exists a path $P_{k-2}$ (if $v_3\in PP(Q_w)$) or $P_{k-1}$ (if $v_3\notin PP(Q_w)$) with endpoint $w$ in $G[X_{12}]$. Let $G_1$ be a connected component of $G[X_{12}]$ such that $P=ww_1\cdots w_{k-3}$ is a path with length $k-3$ in $G_1$, $l$ be the length of the longest path in $G_1$.
Then $k-3\leq l\leq k-2$. Otherwise, if $l\geq k-1$, then $G[\{v_1,v_2\}\cup V(G_1)]$ and thus $G$ contains a copy of $K_2\vee P_k$, a contradiction.

If $l=k-3$, then $P$ is the longest path in $G_1$. Similar to above arguments for $w$, there exists a path $P^1$ with length $k-3$ and endpoint $w_1$ in $G_1$, then $P\cup P^1$ must contain a cycle. Otherwise, there exists a path with length longer than $k-3$ in $G_1$, a contradiction.

If $l=k-2$, we assume $P'=w'_{1}w'_{2}\cdots w'_{k-1}$ is the longest path in $G_1$. Since there exists a path $P^2$ with length $k-3$ and endpoint $w'_3$ in $G_1$, it follows that $P^2\cup P'$ contains a cycle.
Therefore, $G_1$ and thus $G[X_{12}]$ contains a cycle.

For $w\in X_{13}$ or $X_{23}$, we can show $G[X_{13}]$ and $G[X_{23}]$ also contain a cycle similarly.

Combining above arguments, there are three disjoint cycles in $G[U]$, then $|E(G[U])|\geq |U|-1+3=n-2$ by Claim $6$, and $|E(G)|\geq 3n-4$ by (\ref{eq4.2}), (\ref{eq4.3}) holds.

$\mathbf{Subcase~2.2.2:}$ $X_{123}\neq\emptyset$.

Let $G_1,G_2,\dots,G_t$ be all connected components of $G[U]$. By (\ref{eq4.2}), we have
\begin{equation}\label{eq4.4}
|E(G)|\geq 6+2|U_{2}|+3|U_{3}|+\sum_{i=1}^{t}(|V(G_i)|-1)=6+3(n-4)+|X_{123}|-t.
\end{equation}

For any $w\in X_{12}$, without loss of generality, we take $w\in V(G_1)$. By Proposition \ref{pro4-1}, for any $x\in X_{13}\cup X_{23}$, there exists a path between $w$ and $x$ in $G[U]$, thus $X_{12}\cup X_{13}\cup X_{23}\subseteq V(G_1)$, and if $t\geq2$, $V(G_{j})\subseteq X_{123}$ for any $j\in\{2,\dots,t\}$.

\noindent\textbf{Claim 7.}
Let $z_1,z_2\in X_{12}\cup X_{13}\cup X_{23}$, $z_1z_2\notin E(G)$ and $|N(z_1)\cap N(z_2)\cap N(v)|=1$. Then there exists a path with length $2$ between $z_1$ and $z_2$ in $G_1$.

\noindent\textbf{\textit{Proof of Claim 7}.} Since $z_1z_2\notin E(G)$, there exist at least two paths with length $2$ between $z_1$ and $z_2$ in $G$ by Proposition \ref{pro4-1}, one is $z_1v_iz_2$, where $\{v_i\}=N(z_1)\cap N(z_2)\cap N(v)$, and the other must be in $G_1$ since $X_{12}\cup X_{13}\cup X_{23}\subseteq V(G_1)$.
{\hfill $\square$ \par}

$\mathbf{Subcase~2.2.2.1:}$ $V(G_1)\cap X_{123}=\emptyset$.

Then $t\geq 2$, $X_{123}=\bigcup_{j=2}^{t}V(G_j)$, and $V(G_1)=U_2=X_{12}\cup X_{13}\cup X_{23}$.

If there exists some $G_i$ such that $|V(G_i)|=1$ with $i\in\{2,\dots,t\}$, we take $V(G_i)=\{x\}$. Since $G+vx$ contains a copy of $K_{2}\vee P_{k}$, say $Q$, we have $v,x\notin C(Q)$ by $d_{G}(v)=d_{G}(x)=3$, and $C(Q)\subset\{v_1,v_2,v_3\}$ by Corollary \ref{cor4-2}. Without loss of generality, let $C(Q)=\{v_1,v_2\}$.
Then $PP(Q)\backslash\{x,v,v_3\}$ must be in the same component of $G[X_{123}]$, without loss of generality, we assume $PP(Q)\backslash\{x,v,v_3\}\subseteq V(G_j)$, where $j\in\{2,\dots,t\}\backslash\{i\}$. Then $|V(G_j)|\geq k-3$. On the other hand, $|V(G_l)|=1$ for any $l\in\{2,\dots,t\}\backslash\{j\}$. Otherwise, if $|V(G_l)|\geq 2$ with $l\in \{2,\dots,t\}\backslash\{i,j\}$, then $G[V(G_l)\cup V(G_j)\cup N(v)]$ contains a copy of $K_{2}\vee P_{k}$, say $Q'$ with $C(Q')=\{v_1,v_2\}$ and $PP(Q')\subseteq V(G_l)\cup\{v_3\}\cup V(G_j)$, a contradiction. Then $|X_{123}|=t-2+|V(G_j)|\geq t+k-5$. Therefore, $|E(G)|\geq 6+3(n-4)+k-5\geq 3n-5$ by (\ref{eq4.4}), (\ref{eq4.3}) holds.

If $|V(G_i)|\geq 2$ for any $i\in \{2,\dots,t\}$ and there exists some $G_j$ with $j\in\{2,\dots,t\}$ such that $|V(G_j)|\geq 3$, then $|X_{123}|\geq 2(t-2)+3=2t-1$, and $|E(G)|\geq 6+3(n-4)+t-1\geq 3n-5$ by (\ref{eq4.4}), (\ref{eq4.3}) holds.

If $|V(G_i)|=2$ for any $i\in\{2,\dots,t\}$, then $|X_{123}|=2(t-1)$.
Let $z\in V(G_2)$. Since $G+vz$ contains a copy of $K_{2}\vee P_{k}$, say $Q''$, we have $v,z\notin C(Q'')$ by $d_{G}(v)=3$ and $d_{G}(z)=4$, $C(Q'')\subset\{v_1,v_2,v_3\}$ by Corollary \ref{cor4-2} and $PP(Q'')\subseteq X_{123}\cup(N[v]\backslash C(Q''))$.
Then $k=6$, $t\geq 3$, and thus $|E(G)|\geq 6+3(n-4)+2(t-1)-t\geq 3n-5$ by (\ref{eq4.4}), (\ref{eq4.3}) holds.

$\mathbf{Subcase~2.2.2.2:}$ $V(G_1)\cap X_{123}\neq\emptyset$.

Then $|X_{123}|\geq t$ since $V(G_1)\cap X_{123}\neq\emptyset$, and if $t\geq2$, $V(G_{j})\subseteq X_{123}$ for $2\leq j\leq t$.

If $|V(G_i)\cap X_{123}|\geq 2$ for some $1\leq i\leq t$, then $|X_{123}|\geq t+1$, and thus $|E(G)|\geq 3n-5$ by (\ref{eq4.4}), (\ref{eq4.3}) holds.

If $G_1$ contains a cycle, then $|E(G[U])|\geq |V(G_1)|+\sum_{i=2}^{t}(|V(G_i)|-1)=n-t-3$, and thus $|E(G)|\geq 6+2|U_{2}|+3|U_{3}|+n-t-3\geq 3n-5$ by (\ref{eq4.2}), (\ref{eq4.3}) holds.

If $G_1$ contains no cycle and $|V(G_i)\cap X_{123}|=1$ for any $i\in\{1,2,\dots,t\}$, we take $V(G_1)\cap X_{123}=\{x_1\}$, $V(G_j)=\{x_j\}$ for $2\leq j\leq t$. Then $V(G_1)=X_{12}\cup X_{13}\cup X_{23}\cup\{x_1\}$.

Let $Q$ be the copy of $K_{2}\vee P_{k}$ in $G+vx_1$. Then $v\in PP(Q)$ by $d_{G}(v)=3$, $C(Q)\subset\{x_1,v_1,v_2,v_3\}$ by Corollary \ref{cor4-2}. Moreover, $x_1\notin C(Q)$. Otherwise, if $x_1\in C(Q)$, then $|PP(Q)\cap U_2|\geq k-3\geq3$, and thus $G[\{x_1\}\cup(PP(Q)\cap U_2)]$ contains a cycle in $G_1$ by
$x_1\in C(Q)$, a contradiction. Without loss of generality, let $C(Q)=\{v_1,v_2\}$.

Firstly, we consider $t=1$. If $v_3\notin PP(Q)$, then $G[(V(Q)\setminus\{v\})\cup\{v_3\}]$ and thus $G$ contains a copy of $K_{2}\vee P_{k}$, a contradiction. If $v_3\in PP(Q)$, then $Q-vx_1+v_3x_1$ is a copy of $K_{2}\vee P_{k}$ in $G$, a contradiction.

Next, we consider $t\geq2$. Then there exists some $i\in\{2,\dots,t\}$ such that $x_i\in PP(Q)$ by the proof of $t=1$ and $\{x_2,\ldots,x_t\}$ is an independent set of $G$.
Therefore, $\{v,v_3,x_1,x_i\}\subset PP(Q)$, $|PP(Q)\cap G_1|=k-3$, and there exists a path with length $k-4$ and endpoint $x_1$ in $G_1$, say, $P=x_1w_1w_2\cdots w_{k-4}$, where $w_{i}\in X_{12}$ for any $1\leq i\leq k-4$.

Let $y\in X_{13}$. Then $G+x_2y$ contains a copy of $K_2\vee P_k$, say $Q'$. Clearly, $x_2\notin C(Q')$ by $d_{G}(x_2)=3$. If $y\in C(Q')$, we assume $C(Q')=\{y,v_i\}$ for some $i\in\{1,3\}$ by Corollary \ref{cor4-2} and $yv_2\notin E(G)$, then there exists a copy of $K_1\vee P_{k-2}$ in $G_1$, thus $G_1$ contains a cycle by $k-2>2$, a contradiction. Therefore, $C(Q')=\{v_1,v_3\}$ by Corollary \ref{cor4-2}, $x_2,y\in PP(Q')$.

If $d_{Q'}(y)=3$, then $yx_2,x_2v_2,v_2x_1\in E(Q')$, and there exists a path $P'$ with length $k-4$ and endpoint $x_1$ in $G_1$, say, $P'=x_1y_1y_2\cdots y_{k-4}$, where $y_i\in X_{13}$ for any $i\in\{1,2,\dots,k-4\}$, and the path part of $Q'$ is $yx_2v_2x_1+P'$. Since $G_1$ contains no cycle, we have $y_{k-4}w_{k-4}\notin E(G)$, and $P+P'$ is the unique path with length $2k-8$ ($>2$) between $y_{k-4}$ and $w_{k-4}$ in $G_1$, a contradiction with Claim $7$.

If $d_{Q'}(y)=4$, we take $y'\in V(G_1)\cap PP(Q')\cap N(y)$. Since $y'v_1,y'v_3\in E(G)$, we have $y'=x_1$ or $y'\in X_{13}$. When $y'=x_1$, then $yw_{k-4}\notin E(G)$ and $yx_1+P$ is the unique path with length $k-3$ ($>2$) between $y$ and $w_{k-4}$ in $G_1$ since $G_1$ contains no cycle, a contradiction with Claim $7$. When $y'\in X_{13}$, then $|E_{G_1}[\{y,y'\},V(P)]|\leq1$ since $G_1$ contains no cycle. Thus there exist $w_i,w_j\in V(P)\backslash\{x_1\}$ ($i=j$ is possible) such that $yw_i$, $y'w_j\notin E(G)$, similar to above arguments, we can obtain a contradiction with Claim $7$.

$\mathbf{Case~3:}$ $\delta(G)=4$.

Then $1\leq |E(G[N(v)])|\leq 6$ by Claim $1$.

\noindent\textbf{Claim 8.}
Let $k\geq4$, $\delta(G)=4\leq\Delta(G)\leq n-2$ and $|E(G[N(v)])|=1$. Then (\ref{eq4.3}) holds.

\noindent\textbf{\textit{Proof of Claim 8}.}
Without loss of generality, let $v_1v_2\in E(G)$. Then $V(G)=N[v]\cup X_{12}\cup X_{123}\cup X_{124}\cup X_{1234}$ by Claim $1$.
Since $d_{G}(v_i)\geq \delta(G)=4$ for $i\in\{3,4\}$, we have $|U_3|+2|U_4|=|E[\{v_3,v_4\},U]|=d_{G}(v_3)+d_{G}(v_4)-2\geq 6$ by $U_2=X_{12}$, $U_3=X_{123}\cup X_{124}$ and $U_4=X_{1234}$, which implies $|U|=n-5\geq|U_3|+|U_4|\geq3$, say, $n\geq8$.

If $X_{12}\neq\emptyset$, we take $x\in X_{12}$, $G+v_3x$ contains a copy of $K_{2}\vee P_{k}$, say $Q$, we have $v_3\notin C(Q)$. Otherwise, if $v_3\in C(Q)$, then $PP(Q)\subseteq X_{12}\cup X_{123}\cup X_{1234}$, thus $G[\{v_1,v_2\}\cup PP(Q)]$ and $G$ contain a copy of $K_{2}\vee P_{k}$, a contradiction.
Thus $C(Q)\subset U$ and $\sum_{u\in C(Q)}d_{G}(u)\geq 2k+1$.
Therefore, by (\ref{eq4.1}) and $|U_3|+2|U_4|\geq6$, we have $2|E(G)|\geq (10+2|U_2|+3|U_3|+4|U_4|)+(2k+1+4(n-7))\geq 6n+2k-21$.

If $X_{12}=\emptyset$, then $U=X_{123}\cup X_{124}\cup X_{1234}$.
For any $x\in X_{123}\neq \emptyset$, then $d_{G}(x)\geq 5$ since there exist at least two paths with length $2$ between $x$ and $v_4$ in $G$ by Proposition \ref{pro4-1}. Similarly, $d_{G}(y)\geq 5$ for any $y\in X_{124}\neq \emptyset$. For any $z\in X_{1234}\neq \emptyset$, $G+vz$ contains a copy of $K_{2}\vee P_{k}$, say $Q'$, then $v\notin C(Q')$ since $G[N[v]\cup\{z\}]$ contains no copy of $K_{2}\vee P_{k}$ for $k\geq4$, thus $C(Q')=\{v_1,v_2\}$ or $\{z,v_i\}$ for some $i\in\{1,2\}$ by Corollary \ref{cor4-2} and $|E(G[N(v)])|=1$, and $|V(Q')\cap U|=k-1$. It follows that $d_{G}(z)\geq 5$ and $|U|=n-5\geq k-1$. Therefore, $d_{G}(u)\geq 5$ for any $u\in U$ and $n\geq k+4$. On the other hand, since $G+v_3v_4$ contains a copy of $K_{2}\vee P_{k}$, say $Q''$, we have $|N_{Q''}(v_3)\cap N_{Q''}(v_4)\cap U|\geq 2$ by Claim $2$, which implies $|U_4|\geq 2$.  By (\ref{eq4.2}), we have $|E(G)|\geq 5+3|U_3|+4|U_4|+(|U_3|+0.5|U_4|)\geq4n-14\geq 3n+k-10$.

Combining above arguments, we have $|E(G)|>3n-5$ for $k\geq6$, $|E(G)|\geq 3n-5>\lceil \frac{17n-32}{6} \rceil$ for $k=5$, $|E(G)|\geq 3n-6>\frac{5n-5}{2}$ for $k=4$, then (\ref{eq4.3}) holds.
{\hfill $\square$ \par}

$\mathbf{Subcase~3.1:}$
$|E(G[N(v)])|=1$.

Clearly, (\ref{eq4.3}) holds by Claim $8$.

$\mathbf{Subcase~3.2:}$ $|E(G[N(v)])|=2$.

Then $G[N(v)]\cong P_3\cup K_1$ or $G[N(v)]\cong 2K_2$. For any $x\in U_4=X_{1234}$, $d_{G}(x)\geq 5$, then $d_{G[U]}(x)\geq1$ since $G+vx$ must contain a copy of $K_{2}\vee P_{k}$ whether $x$ is in the center or path part. By $\delta(G)=4$ and (\ref{eq4.2}), we have
\begin{align}
|E(G)|
&\geq 6+2|U_2|+3|U_3|+4|U_4|+(|U_2|+0.5|U_3|+0.5|U_4|)
\nonumber \\
&=3n-9+0.5|U_3|+1.5|U_4|.
\label{eq4.5}
\qedhere
\end{align}

If there exists a vertex $u\in U$ with $d_{G}(u)\geq k+1$, by (\ref{eq4.1}), then
\begin{align}
2|E(G)|
&\geq (12+2|U_2|+3|U_3|+4|U_4|)+(k+1+4(n-6))
\nonumber \\
&=6n+|U_3|+2|U_4|+k-21.
\label{eq4.6}
\qedhere
\end{align}

\noindent\textbf{Claim 9.}
Let $k\geq4$, $\delta(G)=4\leq\Delta(G)\leq n-2$ and $G[N(v)]\cong P_3\cup K_1$. Then (\ref{eq4.3}) holds.

\noindent\textbf{\textit{Proof of Claim 9}.}
Without loss of generality, let $v_1v_2,v_2v_3\in E(G)$. Then $X_{13}=X_{14}=X_{24}=X_{34}=X_{134}=\emptyset$ by Claim $1$, and $V(G)=N[v]\cup X_{12}\cup X_{23}\cup X_{123}\cup X_{124}\cup X_{234}\cup X_{1234}$ with $U_2=X_{12}\cup X_{23}$, $U_3=X_{123}\cup X_{124}\cup X_{234}$ and $U_4=X_{1234}$.

Let $Q$, $Q'$ be the copy of $K_{2}\vee P_{k}$ in $G+v_3v_4$ and $G+v_1v_4$, respectively. Then $|X_{234}\cup X_{1234}|=|N_{Q}(v_3)\cap N_{Q}(v_4)\cap U|\geq 2$ and $|X_{124}\cup X_{1234}|=|N_{Q'}(v_1)\cap N_{Q'}(v_4)\cap U|\geq 2$ by Claim $2$. Moreover, by Claim $2$, Corollary \ref{cor4-2} and $X_{14}=X_{34}=\emptyset$, we have $v\notin V(Q)$, $C(Q)\subseteq\{v_3,v_4\}\cup X_{234}\cup X_{1234}$, $v\notin V(Q')$ and $C(Q')\subset\{v_1,v_4\}\cup X_{124}\cup X_{1234}$.

In fact, $|X_{124}\cup X_{234}\cup X_{1234}|\geq 3$ by $d_{G}(v_4)\geq\delta(G)=4$, which implies $|U|=n-5\geq|U_3|+|U_4|\geq3$, say, $n\geq8$.

If $|U_4|\geq 3$, then $|E(G)|\geq 3n-4$ by (\ref{eq4.5}).

If $|U_4|=2$, then $|U_3|\geq1$ by $|X_{124}\cup X_{234}\cup X_{1234}|\geq 3$, thus $|E(G)|\geq 3n-5$ by (\ref{eq4.5}).

If $|U_4|=1$, then $|U_3|\geq 2$ and $|X_{234}|\geq1$ by $|X_{124}\cup X_{234}\cup X_{1234}|\geq 3$ and $|X_{234}\cup X_{1234}|\geq 2$, and thus $|U_3|+2|U_4|\geq 4$.
Let $U_4=\{x\}$. Then $d_{G}(x)\geq 5$ by considering $G+vx$.

When $C(Q')=\{v_1,v_4\}$, then $PP(Q')\subseteq X_{124}\cup X_{1234}$, and $|X_{124}|\geq k-1$ by $|X_{1234}|=1$, thus $|U_3|\geq|X_{124}|+|X_{234}|\geq k$, and $|E(G)|\geq 3n-5$ by (\ref{eq4.5}).

When $C(Q')\subseteq X_{124}\cup X_{1234}$, then $\sum_{u\in C(Q')}d_{G}(u)\geq2k
+2$, thus $2|E(G)|\geq (12+2|U_2|+3|U_3|+4|U_4|)+(2k+2+4(n-7))\geq 6n+2k-20$ by (\ref{eq4.1}).

When $|C(Q')\cap(X_{124}\cup X_{1234})|=1$, we have $v_3\notin PP(Q')$ and $|PP(Q')\cap U|\geq k-2$. Let $u\in C(Q')\cap(X_{124}\cup X_{1234})$. If $u\in X_{124}$, then $d_{G}(u)\geq k+1$, and thus $2|E(G)|\geq (12+2|U_2|+3|U_3|+4|U_4|)+(d_{G}(u)+d_{G}(x)+4(n-7))\geq 6n+k-16$ by (\ref{eq4.1}).
If $u\in X_{1234}$, then $d_{G}(u)\geq k+2$, and thus $2|E(G)|\geq (12+2|U_2|+3|U_3|+4|U_4|)+(d_{G}(u)+4(n-6))\geq 6n+k-16$ by (\ref{eq4.1}).

If $|U_4|=0$, then $|X_{234}|\geq 2$ by $|X_{234}\cup X_{1234}|\geq 2$. By considering $Q'$ in $G+v_1v_4$, we have $|X_{124}|\geq 2$ by $X_{1234}=U_4=\emptyset$. Similar to the proof of Claim $2$, by considering the center of the copy of $K_2\vee P_k$ in $G+v_1v_3$, we have $|X_{123}|\geq 1$. Thus $|U_3|=|X_{123}|+|X_{124}|+|X_{234}|\geq 5$.
When $C(Q)=\{v_3,v_4\}$, then $|X_{234}|\geq k$, thus $|U_3|\geq k+3$, and $|E(G)|\geq 3n-5$ by (\ref{eq4.5}).
When $C(Q)\neq\{v_3,v_4\}$, we take $w\in C(Q)\cap X_{234}\neq\emptyset$, then $d_{G}(w)\geq k+1$, and thus $2|E(G)|\geq 6n+k-16$ by (\ref{eq4.6}) and $|U_3|\geq 5$.

Combining above arguments, we have $|E(G)|\geq3n-5$ for $k\geq6$, $|E(G)|\geq 3n-5>\lceil \frac{17n-32}{6} \rceil$ for $k=5$, $|E(G)|\geq 3n-6>\frac{5n-5}{2}$ for $k=4$, then (\ref{eq4.3}) holds.
{\hfill $\square$ \par}

\noindent\textbf{Claim 10.}
Let $k\geq4$, $\delta(G)=4\leq\Delta(G)\leq n-2$ and $G[N(v)]\cong 2K_2$. Then (\ref{eq4.3}) holds.

\noindent\textbf{\textit{Proof of Claim 10}.}
Without loss of generality, let $v_1v_2, v_3v_4\in E(G)$. Then
$X_{13}=X_{14}=X_{23}=X_{24}=\emptyset$ by Claim $1$, and $V(G)=N[v]\cup X_{12}\cup X_{34}\cup X_{123}\cup X_{124}\cup X_{134}\cup X_{234}\cup X_{1234}$ with $U_2=X_{12}\cup X_{34}$, $U_3=X_{123}\cup X_{124}\cup X_{134}\cup X_{234}$ and $U_4=X_{1234}$. Since $G+v_1v_3$ and $G+v_2v_4$ must contain a copy of $K_{2}\vee P_{k}$, we have $|X_{123}\cup X_{134}\cup X_{1234}|\geq2$ and $|X_{124}\cup X_{234}\cup X_{1234}|\geq2$ by Claim $2$, which implies $|U_3|+2|U_4|\geq4$.

If $|U_4|\geq 3$, then $|E(G)|\geq 3n-5$ by (\ref{eq4.5}).

If $|U_4|\leq 2$, we consider the copy of $K_{2}\vee P_{k}$ in $G+v_iv_j$, say $Q_{ij}$, where $i\in\{1,2\}$ and $j\in\{3,4\}$. Obviously, $v\notin V(Q_{ij})$ by Claim $2$, and $C(Q)\subseteq\{v_i,v_j\}\cup U$ by Corollary \ref{cor4-2}.

If there exist $i\in\{1,2\}$ and $j\in\{3,4\}$ such that $C(Q_{ij})=\{v_i,v_j\}$, without loss of generality, let $i=1$ and $j=3$. Then $PP(Q_{13})\subseteq X_{123}\cup X_{134}\cup X_{1234}$, thus $|X_{123}\cup X_{134}\cup X_{1234}|\geq k$, which implies $|U|=n-5\geq k$, say, $n\geq9$. By $|X_{124}\cup X_{234}\cup X_{1234}|\geq 2$, we have $|U_3|\geq k-2$ for $|U_4|=2$, $|U_3|\geq k$ for $|U_4|=1$ and $|U_3|\geq k+2$ for $|U_4|=0$, then $|E(G)|\geq 3n-5$ for $k\geq5$ and $|E(G)|\geq 3n-6$ for $k=4$ by (\ref{eq4.5}).

If $C(Q_{ij})\neq\{v_i,v_j\}$ for any $i\in\{1,2\}$ and $j\in\{3,4\}$, then $1\leq|C(Q_{ij})\cap U|\leq2$. If there exist $i,j$ such that $|C(Q_{ij})\cap U|=2$, then $\sum_{u\in C(Q_{ij})}d_{G}(u)\geq2k+2$, thus $2|E(G)|\geq (12+2|U_2|+3|U_3|+4|U_4|)+(2k+2+4(n-7))\geq 6n+2k-20$ by (\ref{eq4.1}).
If $|C(Q_{ij})\cap U|=1$ for any $i\in\{1,2\}$ and $j\in\{3,4\}$, we take $C(Q_{13})\cap U=\{w_1\}$ and $C(Q_{24})\cap U=\{w_2\}$. In fact, $w_1\in X_{123}\cup X_{134}\cup X_{1234}$ and $w_2\in X_{124}\cup X_{234}\cup X_{1234}$. When $w_1=w_2$, then $w_1=w_2\in X_{1234}$, thus $|PP(Q_{13})\cap U|\geq k-2$, which implies $d_{G}(w_1)\geq k+2$, and $2|E(G)|\geq (12+2|U_2|+3|U_3|+4|U_4|)+(k+2+4(n-6))\geq 6n+k-16$ by (\ref{eq4.1}).
When $w_1\neq w_2$, then $d_{G}(w_1)+d_{G}(w_2)\geq 2k+2$, and $2|E(G)|\geq 6n+2k-20$ by (\ref{eq4.1}).

Combining above arguments, we have $|E(G)|\geq3n-5$ for $k\geq6$, $|E(G)|\geq 3n-5>\lceil \frac{17n-32}{6} \rceil$ for $k=5$, $|E(G)|\geq 3n-6>\frac{5n-5}{2}$ for $k=4$, then (\ref{eq4.3}) holds.
{\hfill $\square$ \par}

Therefore, if $G[N(v)]\cong P_3\cup K_1$ or $G[N(v)]\cong 2K_2$, then (\ref{eq4.3}) holds by Claims $9$-$10$.

$\mathbf{Subcase~3.3:}$ $|E(G[N(v)])|=3$.

Then $G[N(v)]\not\cong K_{1,3}$ by Claim $3$, thus $G[N(v)]\cong P_4$ or $G[N(v)]\cong K_3\cup K_1$. If there exists a vertex $u\in U$ with $d_{G}(u)\geq k+1$, by (\ref{eq4.1}), then
\begin{align}
2|E(G)|
&\geq (14+2|U_2|+3|U_3|+4|U_4|)+(k+1+4(n-6))
\nonumber \\
&=6n+|U_3|+2|U_4|+k-19.
\label{eq4.7}
\qedhere
\end{align}

\noindent\textbf{Claim 11.}
Let $k\geq4$, $\delta(G)=4\leq\Delta(G)\leq n-2$ and $G[N(v)]\cong P_4$. Then (\ref{eq4.3}) holds.

\noindent\textbf{\textit{Proof of Claim 11}.}
Without loss of generality, let $v_1v_2, v_2v_3, v_3v_4\in E(G)$. Then
$X_{13}=X_{14}=X_{24}=\emptyset$ by Claim $1$, and $V(G)=N[v]\cup X_{12}\cup X_{23}\cup X_{34}\cup X_{123}\cup X_{124}\cup X_{134}\cup X_{234}\cup X_{1234}$ with $U_2=X_{12}\cup X_{23}\cup X_{34}$, $U_3=X_{123}\cup X_{124}\cup X_{134}\cup X_{234}$ and $U_4=X_{1234}$. If $k\geq5$, then $d_{G}(x)\geq 5$ for any $x\in U_4$ since $G+vx$ must contain a copy of $K_{2}\vee P_{k}$. By (\ref{eq4.2}), we have $|E(G)|\geq 7+2|U_2|+3|U_3|+4|U_4|+(|U_2|+0.5|U_3|+0.5|U_4|)=3n-8+0.5|U_3|+1.5|U_4|$ for $k\geq5$ and $|E(G)|\geq 3n-8+0.5|U_3|+|U_4|$ for $k\geq4$.

If $|U_4|\geq 2$, then $|E(G)|\geq 3n-5$ for $k\geq5$ and $|E(G)|\geq 3n-6$ for $k=4$.

If $|U_4|\leq1$, we consider the copy of $K_{2}\vee P_{k}$ in $G+v_1v_4$, say $Q$, we have $v\notin V(Q)$ by Claim $2$, and $C(Q)\subseteq\{v_1,v_4\}\cup X_{124}\cup X_{134}\cup X_{1234}$ by Corollary \ref{cor4-2}. Moreover,
$|X_{124}\cup X_{134}\cup X_{1234}|=|N_{Q}(v_1)\cap N_{Q}(v_4)\cap U|\geq 2$ by Claim $2$, which implies $|U_3|+|U_4|\geq 2$.

When $C(Q)=\{v_1,v_4\}$, then $PP(Q)\subseteq X_{124}\cup X_{134}\cup X_{1234}$, which implies $|U_3|+|U_4|\geq|PP(Q)|=k\geq4$, and thus $|E(G)|\geq 3n-5$ for $k\geq5$ and $|E(G)|\geq 3n-6$ for $k=4$.

When $C(Q)\neq\{v_1,v_4\}$, we take $u\in C(Q)\cap U$, then $d_{G}(u)\geq k+1$, and $|U|=n-5\geq |V(Q)\cap U|\geq k-1$, say, $n\geq8$.
If $|U_4|=1$, or $|U_4|=0$ with $|U_3|\geq 3$, then $|U_3|+2|U_4|\geq 3$, and thus $2|E(G)|\geq 6n+k-16$ by (\ref{eq4.7}).
If $|U_4|=0$ and $|U_3|=2$, then $|X_{124}\cup X_{134}|=2$ and $|X_{123}|=|X_{234}|=0$. Now we show $|X_{124}|=|X_{134}|=1$. Otherwise, if $|X_{124}|=2$ or $|X_{134}|=2$, then $G+v_1v_3$ or $G+v_2v_4$ contains no copy of $K_{2}\vee P_{k}$, a contradiction. Let $X_{124}=\{x_1\}$ and $X_{134}=\{x_2\}$. Then $d_{G}(x_i)\geq k+1$ for $i\in\{1,2\}$ by considering $G+v_1v_3$ and $G+v_2v_4$. It follows that $2|E(G)|\geq (14+2|U_2|+3|U_3|+4|U_4|)+(2k+2+4(n-7))\geq 6n+2k-20$ by (\ref{eq4.1}).

Combining above arguments, we have $|E(G)|\geq3n-5$ for $k\geq6$, $|E(G)|\geq 3n-5>\lceil \frac{17n-32}{6} \rceil$ for $k=5$, $|E(G)|\geq 3n-6>\frac{5n-5}{2}$ for $k=4$, then (\ref{eq4.3}) holds.
{\hfill $\square$ \par}

\noindent\textbf{Claim 12.}
Let $k\geq4$, $\delta(G)=4\leq\Delta(G)\leq n-2$ and $G[N(v)]\cong K_3\cup K_1$. Then (\ref{eq4.3}) holds.

\noindent\textbf{\textit{Proof of Claim 12}.}
Without loss of generality, let $v_1v_2,v_2v_3,v_3v_1\in E(G)$. Then
$X_{14}=X_{24}=X_{34}=\emptyset$ by Claim $1$, and $V(G)=N[v]\cup X_{12}\cup X_{13}\cup X_{23}\cup X_{123}\cup X_{124}\cup X_{134}\cup X_{234}\cup X_{1234}$ with $U_2=X_{12}\cup X_{13}\cup X_{23}$, $U_3=X_{123}\cup X_{124}\cup X_{134}\cup X_{234}$ and $U_4=X_{1234}$. Clearly, $|U|=n-5\geq 3$ by $d_{G}(v_4)\geq4$, say, $n\geq8$.

Let $Q$ be the a copy of $K_{2}\vee P_{k}$ in $G+v_1v_4$. Then $v\notin V(Q)$ by Claim $2$, and $C(Q)\subseteq\{v_1,v_4\}\cup X_{124}\cup X_{134}\cup X_{1234}$ by Corollary \ref{cor4-2}.

If $C(Q)=\{v_1,v_4\}$, then $PP(Q)\subseteq X_{124}\cup X_{134}\cup X_{1234}$, and thus $|U_3|+|U_4|\geq |PP(Q)|=k\geq4$. By (\ref{eq4.2}), $|E(G)|\geq 7+2|U_2|+3|U_3|+4|U_4|+(|U_2|+0.5|U_3|)=3n-8+0.5|U_3|+|U_4|$, thus $|E(G)|\geq 3n-5$ for $k\geq5$ and $|E(G)|\geq 3n-6$ for $k=4$.

If $C(Q)\neq\{v_1,v_4\}$, we take $u\in C(Q)\cap U$, then $d_{G}(u)\geq k+1$.
Since $G+v_iv_4$ must contain a copy of $K_{2}\vee P_{k}$, where $i\in\{1,2,3\}$, we have $|X_{124}\cup X_{134}\cup X_{1234}|\geq 2$, $|X_{124}\cup X_{234}\cup X_{1234}|\geq 2$ and $|X_{134}\cup X_{234}\cup X_{1234}|\geq 2$ by Claim $2$, thus $|U_3|+2|U_4|\geq 3$.
Therefore, $2|E(G)|\geq 6n+k-16$ by (\ref{eq4.7}).

Combining above arguments, we have $|E(G)|\geq3n-5$ for $k\geq6$, $|E(G)|\geq 3n-5>\lceil \frac{17n-32}{6} \rceil$ for $k=5$, $|E(G)|\geq 3n-6>\frac{5n-5}{2}$ for $k=4$, then (\ref{eq4.3}) holds.
{\hfill $\square$ \par}

Therefore, if $G[N(v)]\cong P_4$ or $G[N(v)]\cong K_3\cup K_1$, then (\ref{eq4.3}) holds by Claims $11$-$12$.

$\mathbf{Subcase~3.4:}$ $|E(G[N(v)])|=4$.

Then $G[N(v)]\cong C_4$ or $G[N(v)]\cong K_{1,3}^{+}$, where $K_{1,3}^{+}$ is a graph obtained by adding an edge to $K_{1,3}$.
If there exists a vertex $u\in U$ with $d_{G}(u)\geq 5$, by (\ref{eq4.1}), then we have
\begin{equation}\label{eq4.8}
2|E(G)|\geq (16+2|U_2|+3|U_3|+4|U_4|)+(5+4(n-6))=6n-13+|U_3|+2|U_4|.
\end{equation}

\noindent\textbf{Claim 13.}
Let $k\geq5$, $\delta(G)=4\leq\Delta(G)\leq n-2$ and $G[N(v)]\cong C_4$. Then (\ref{eq4.3}) holds.

\noindent\textbf{\textit{Proof of Claim 13}.}
Without loss of generality, let $v_1v_2, v_2v_3, v_3v_4, v_4v_1\in E(G)$. Then
$X_{13}=X_{24}=\emptyset$ by Claim $1$, and $V(G)=N[v]\cup X_{12}\cup X_{14}\cup X_{23}\cup X_{34}\cup X_{123}\cup X_{124}\cup X_{134}\cup X_{234}\cup X_{1234}$.

If $U_4\neq\emptyset$, then $d_{G}(x)\geq 5$ for any $x\in U_4$ since $G+vx$ must contain a copy of $K_{2}\vee P_{k}$, and thus $2|E(G)|\geq6n-11$ by (\ref{eq4.8}).

If $U_4=\emptyset$, similar to the proof of Claim $2$, we consider the center of the copy of $K_2\vee P_k$ in $G+v_1v_3$ and $G+v_2v_4$, and we have
$|X_{123}\cup X_{134}|\geq 1$ and $|X_{124}\cup X_{234}|\geq 1$, which implies $|U_3|\geq2$.
If there exists a vertex $u\in U$ such that $d_{G}(u)\geq 5$, then $2|E(G)|\geq6n-11$ by (\ref{eq4.8}).
If $d_{G}(u)=4$ for any $u\in U$, then we consider the copy of $K_2\vee P_k$ of $G+vw$, say $Q$, where $w\in U_3$. Without loss of generality, assume that $w\in X_{123}$. Clearly, $v,w\notin C(Q)$ by $d_{G}(v)=d_{G}(w)=4$ and $k\geq5$, then $C(Q)\subset \{v_1,v_2,v_3\}$ by Corollary \ref{cor4-2}. Without loss of generality, we take $C(Q)=\{v_1,v_2\}$, then $PP(Q)\backslash \{v\}\subseteq U$. Therefore, there exists a path $ww_1\cdots w_{k-2}$ in $G[U]$ with $N(w_i)\cap N(v)=\{v_1,v_2\}$ by $d_{G}(w_i)=4$ for $1\leq i\leq k-3$, which implies that there exists a unique path with length $2$ between $w_1$ and $v_4$ in $G$, say, $w_1v_1v_4$, a contradiction with Proposition \ref{pro4-1}.

Combining above arguments, we have $|E(G)|\geq3n-5$ for $k\geq6$, $|E(G)|\geq 3n-5>\lceil \frac{17n-32}{6} \rceil$ for $k=5$, then (\ref{eq4.3}) holds.
{\hfill $\square$ \par}

\noindent\textbf{Claim 14.}
Let $k\geq5$, $\delta(G)=4\leq\Delta(G)\leq n-2$ and $G[N(v)]\cong K_{1,3}^{+}$. Then (\ref{eq4.3}) holds.

\noindent\textbf{\textit{Proof of Claim 14}.}
Without loss of generality, let $v_1v_2, v_2v_3, v_3v_1, v_3v_4\in E(G)$. Then
$X_{14}=X_{24}=\emptyset$ by Claim $1$, and $V(G)=N[v]\cup X_{12}\cup X_{13}\cup X_{23}\cup X_{34}\cup X_{123}\cup X_{124}\cup X_{134}\cup X_{234}\cup X_{1234}$. Thus $|E(G)|\geq 8+2|U_2|+3|U_3|+4|U_4|+(|U_2|+0.5|U_3|)=3n-7+0.5|U_3|+|U_4|$ by (\ref{eq4.2}).

If $|U_4|\geq 2$, or $|U_4|=1$ with $|U_3|\geq 1$, or $|U_4|=0$ with $|U_3|\geq 3$, then $|E(G)|\geq 3n-5$.

If $|U_4|=1$ and $|U_3|=0$, we take $U_4=\{w\}$, then $d_{G}(w)\geq 5$ since $G+vw$ contains a copy of $K_{2}\vee P_{k}$, and thus $2|E(G)|\geq 6n-11$ by (\ref{eq4.8}).

If $|U_4|=0$ and $|U_3|\leq 2$, then $|U_3|\geq 1$ since $G+v_1v_4$ must contain a copy of $K_{2}\vee P_{k}$.
When $d_{G}(u)\geq 6$ for some $u\in U$, then $2|E(G)|\geq
6n-12+|U_3|\geq 6n-11$ by (\ref{eq4.1}).
When $4\leq d_{G}(u)\leq 5$ for any $u\in U$, similar to the proof of Claim $2$, we consider the center of the copy of $K_2\vee P_k$ in $G+v_1v_4$ and $G+v_2v_4$, and we have $|X_{123}\cup X_{134}|\geq 1$ and $|X_{123}\cup X_{234}|\geq 1$. Now we only need to consider $|X_{123}\cup X_{134}\cup X_{234}|=2$ or $|X_{123}\cup X_{134}\cup X_{234}|=1$ by $|U_3|\leq 2$.

When $|X_{123}\cup X_{134}\cup X_{234}|=2$, we have $|X_{124}|=0$, and thus $X_{12}\neq\emptyset$. Otherwise, $d_{G}(v_3)=n-1$, a contradicts with $\Delta(G)\leq n-2$.
If $d_{G}(u)=5$ for some $u\in U$, then $2|E(G)|\geq 6n-11$ by (\ref{eq4.8}).
If $d_{G}(u)=4$ for any $u\in U$, we take $x\in X_{12}$. By Proposition \ref{pro4-1}, there exist at least two paths with length $2$ between $x$ and $v_4$, thus $N(x)\cap U\subseteq X_{34}\cup X_{134}\cup X_{234}$. Since $G+vx$ contains a copy of $K_{2}\vee P_{k}$, say $Q$, we have $C(Q)=\{v_1,v_2\}$ by $d_{G}(v)=d_{G}(x)=4$ and $k\geq5$, $PP(Q)\subseteq\{x,v,v_3\}\cup X_{12}\cup X_{123}$,
and thus $d_{Q}(x)=3$ by $N(x)\cap U\cap C(Q)=\emptyset$. Therefore, there exists a path $P=v_3y_1y_2\cdots y_{k-3}$ in $G$ with $y_1\in X_{123}$ and $y_2\in X_{12}\cup X_{123}$ (by $y_2v_1,y_2v_2\in E(G)$ and $X_{124}=X_{1234}=\emptyset$).  However, there exists a unique path with length $2$ between $y_1$ and $v_4$ in $G$, say, $y_1v_3v_4$, a contradiction with Proposition \ref{pro4-1}.

When $|X_{123}\cup X_{134}\cup X_{234}|=1$, we have $|X_{123}|=1$, $|X_{134}|=0$ and $|X_{234}|=0$ by $|X_{123}\cup X_{134}|\geq 1$ and $|X_{123}\cup X_{234}|\geq 1$. Let $X_{123}=\{y\}$. Since $G+vy$ contains a copy of $K_{2}\vee P_{k}$, say $Q'$, we have $v\notin C(Q')$ by $d_{G}(v)=4$ and $k\geq 5$, thus $C(Q')\subset\{v_1,v_2,v_3,y\}$ by Corollary \ref{cor4-2}. Now we show $y\in C(Q')$. If not, $C(Q')\subset\{v_1,v_2,v_3\}$, without loss of generality, let $C(Q')=\{v_1,v_2\}$. Then $PP(Q')\subseteq \{v_3,v,y\}\cup X_{12}\cup X_{124}$.
If $v_3\notin PP(Q')$, then $G[(V(Q')\setminus\{v\})\cup\{v_3\}]$ and thus $G$ contains a copy of $K_{2}\vee P_{k}$, a contradiction. If $v_3\in PP(Q')$, then $Q'-vy+v_3y$ is a copy of $K_{2}\vee P_{k}$ in $G$, a contradiction.
Therefore, $d_{G}(y)\geq k+1$, and $2|E(G)|\geq 16+2|U_2|+3|U_3|+k+1+4(n-6)\geq 6n+k-16$.

Combining above arguments, we have $|E(G)|\geq3n-5$ for $k\geq6$, $|E(G)|\geq 3n-5>\lceil \frac{17n-32}{6} \rceil$ for $k=5$, then (\ref{eq4.3}) holds.
{\hfill $\square$ \par}

Therefore, if $G[N(v)]\cong C_4$ or $G[N(v)]\cong K_{1,3}^{+}$, then (\ref{eq4.3}) holds by Claims $13$-$14$.

$\mathbf{Subcase~3.5:}$ $5\leq|E(G[N(v)])|\leq6$.

\noindent\textbf{Claim 15.}
Let $k\geq4$, $\delta(G)=4\leq\Delta(G)\leq n-2$ and $5\leq|E(G[N(v)])|\leq6$. Then (\ref{eq4.3}) holds.

\noindent\textbf{\textit{Proof of Claim 15}.}
If $|E(G[N(v)])|=5$, then $G[N(v)]\cong C_{4}^{+}$, where $C_{4}^{+}$ is a graph obtained by adding an edge to $C_4$. Without loss of generality, let $v_1v_2, v_2v_3, v_3v_4, v_4v_1, v_1v_3\in E(G)$. Then
$X_{24}=\emptyset$ by Claim $1$, and $|U_3|+|U_4|\geq1$ since there exists a copy of $K_{2}\vee P_{k}$ in $G+v_2v_4$ for $k\geq4$. Therefore, $|E(G)|\geq 9+2|U_2|+3|U_3|+4|U_4|+(|U_2|+0.5|U_3|)=3n-6+0.5|U_3|+|U_4|\geq 3n-5$ by (\ref{eq4.2}).

If $|E(G[N(v)])|=6$, then $G[N[v]]\cong K_5$, and thus $|E(G)|\geq 10+2|U_2|+3|U_3|+4|U_4|+(|U_2|+0.5|U_3|)\geq 3n-5$ by (\ref{eq4.2}) and $|U_2|+|U_3|+|U_4|=n-5$.

Combining above arguments, we have $|E(G)|\geq3n-5$ for $k\geq6$, $|E(G)|\geq 3n-5>\lceil \frac{17n-32}{6} \rceil$ for $k=5$, $|E(G)|\geq 3n-5>\frac{5n-5}{2}$ for $k=4$, then (\ref{eq4.3}) holds.
{\hfill $\square$ \par}

Clearly, if $5\leq|E(G[N(v)])|\leq6$, then (\ref{eq4.3}) holds.

$\mathbf{Case~4:}$ $\delta(G)=5$.

Then $n\geq 12$ by $n\geq a_{k}+2$ and $k\geq 6$.
By (\ref{eq4.1}), we have
\begin{align}
2|E(G)|
&\geq 2(5+|E(G[N(v)])|)+2|U_2|+3|U_3|+4|U_4|+5|U_5|+5(n-6)
\nonumber \\
&=7n+2|E(G[N(v)])|-32+|U_3|+2|U_4|+3|U_5|.
\label{eq4.9}
\qedhere
\end{align}

By Claim $1$, we have $|E(G[N(v)])|\neq0$, then $1\leq|E(G[N(v)])|\leq10$.

If $|E(G[N(v)])|\geq 5$, then $2|E(G)|\geq 7n-22\geq 6n-10$ by (\ref{eq4.9}), (\ref{eq4.3}) holds.

If $|E(G[N(v)])|=4$, then $G[N(v)]\not\cong K_{1,4}$ by Claim $3$, and thus there exist $v_i,v_j\in N(v)$ with $v_iv_j\notin E(G)$ and $N(v_i)\cap N(v_j)\cap N(v)=\emptyset$ for some $1\leq i<j\leq5$. It follows that $X_{ij}=\emptyset$ by Claim $1$ and $|N(v_i)\cap N(v_j)\cap U|\geq 2$ by Claim $2$, which implies $|U_3\cup U_4\cup U_5|\geq 2$. Thus $2|E(G)|\geq 7n-22\geq 6n-10$ by (\ref{eq4.9}), (\ref{eq4.3}) holds.

In the following, we consider $1\leq|E(G[N(v)])|\leq3$.

$\mathbf{Subcase~4.1:}$ $|E(G[N(v)])|=1$.

Without loss of generality, let $v_1v_2\in E(G)$.
Then $V(G)=N[v]\cup X_{12}\cup X_{123}\cup X_{124}\cup X_{125}\cup X_{1234}\cup X_{1235}\cup X_{1245}\cup X_{12345}$ by Claim $1$. Since $d_{G}(v_i)\geq \delta(G)=5$ for $i\in\{3,4,5\}$, we have $|U_3|+2|U_4|+3|U_5|=|E[\{v_3,v_4,v_5\},U]|=d_{G}(v_3)+d_{G}(v_4)+d_{G}(v_5)-3\geq 12$ by $N(v_3)\cap U=X_{123}\cup X_{1234}\cup X_{1235}\cup X_{12345}$, $N(v_4)\cap U=X_{124}\cup X_{1234}\cup X_{1245}\cup X_{12345}$ and $N(v_5)\cap U=X_{125}\cup X_{1235}\cup X_{1245}\cup X_{12345}$. Thus $2|E(G)|\geq 7n-18\geq 6n-11$ by (\ref{eq4.9}), (\ref{eq4.3}) holds.

$\mathbf{Subcase~4.2:}$ $|E(G[N(v)])|=2$.

If $G[N(v)]\cong P_3\cup2K_1$, without loss of generality, let $v_1v_2,v_2v_3\in E(G)$. Then $V(G)=N[v]\cup X_{12}\cup X_{23}\cup X_{123}\cup X_{124}\cup X_{125}\cup X_{234}\cup X_{235}\cup X_{1234}\cup X_{1235}\cup X_{1245}\cup X_{2345}\cup X_{12345}$ by Claim $1$. Since $G+v_4v_5$, $G+v_1v_4$ and $G+v_3v_4$ must contain a copy of $K_2\vee P_{k}$, then $|X_{1245}\cup X_{2345}\cup X_{12345}|\geq 2$, $|X_{124}\cup X_{1234}\cup X_{1245}\cup X_{12345}|\geq 2$ and $|X_{234}\cup X_{1234}\cup X_{2345}\cup X_{12345}|\geq 2$ by Claim $2$, and thus $|U_3|+2|U_4|+3|U_5|\geq 5$.

If $G[N(v)]\cong 2K_2\cup K_1$, without loss of generality, let $v_1v_2,v_3v_4\in E(G)$.
Then $V(G)=N[v]\cup X_{12}\cup X_{34}\cup X_{123}\cup X_{124}\cup X_{125}\cup X_{134}\cup X_{234}\cup X_{345}\cup X_{1234}\cup X_{1235}\cup X_{1245}\cup X_{1345}\cup X_{2345}\cup X_{12345}$ by Claim $1$. Since $G+v_2v_3$, $G+v_1v_5$ and $G+v_4v_5$ must contain a copy of $K_2\vee P_{k}$, then $|X_{123}\cup X_{234}\cup X_{1234}\cup X_{1235}\cup X_{2345}\cup X_{12345}|\geq 2$, $|X_{125}\cup X_{1235}\cup X_{1245}\cup X_{1345}\cup X_{12345}|\geq 2$ and $|X_{345}\cup X_{1245}\cup X_{1345}\cup X_{2345}\cup X_{12345}|\geq 2$ by Claim $2$, and thus $|U_3|+2|U_4|+3|U_5|\geq 5$.

Therefore, $2|E(G)|\geq7n-23\geq 6n-11$ by (\ref{eq4.9}), (\ref{eq4.3}) holds.

$\mathbf{Subcase~4.3:}$ $|E(G[N(v)])|=3$.

Then $G[N(v)]\in\{K_{1,3}\cup K_1,K_3\cup 2K_1,P_4\cup K_1,P_3\cup P_2\}$, and there exists $v_i,v_j\in N(v)$ with $v_iv_j\notin E(G)$ and $N(v_i)\cap N(v_j)\cap N(v)=\emptyset$ for some $1\leq i<j\leq 5$.
Since $G+v_iv_j$ contains a copy of $K_2\vee P_k$, say $Q$, we have $C(Q)\subset\{v_i,v_j\}\cup(N(v_i)\cap N(v_j))\subseteq \{v_i,v_j\}\cup U_3\cup U_4\cup U_5$ by Corollary \ref{cor4-2} and $v\notin C(Q)$.
Then $|N_{Q}(v_i)\cap N_{Q}(v_j)\cap U|\geq 2$ by Claim $2$, which implies
$|U_3\cup U_4\cup U_5|\geq 2$.

If $C(Q)=\{v_i,v_j\}$, then $PP(Q)\subseteq U_3\cup U_4\cup U_5$ and $|U_3\cup U_4\cup U_5|\geq k\geq6$, which implies $|U_3|+2|U_4|+3|U_5|\geq 6$,
and $2|E(G)|\geq7n-20\geq 6n-11$ by (\ref{eq4.9}), (\ref{eq4.3}) holds.

If $C(Q)\neq\{v_i,v_j\}$, we take $u\in C(Q)\cap(U_3\cup U_4\cup U_5)$, then $d_{G}(u)\geq k+1$, and $2|E(G)|\geq 16+2|U_2|+3|U_3|+4|U_4|+5|U_5|+k+1+5(n-7)=7n+k-30+|U_3|+2|U_4|+3|U_5|\geq 7n-22\geq 6n-11$ by (\ref{eq4.1}) and $|U_3\cup U_4\cup U_5|\geq 2$, (\ref{eq4.3}) holds.
{\hfill $\square$ \par}

\subsection{$\boldsymbol{k=3}$ with $\boldsymbol{3\leq \delta(G)\leq\Delta(G)\leq n-2}$}
In this part, we show that (\ref{eq4.3}) holds, with equality if and only if $G\cong H_9$ and $n=6$.

$\mathbf{Case~1:}$ $\delta(G)\geq 5$.

Then $|E(G)|\geq \frac{5n}{2}>\lfloor \frac{5n-8}{2} \rfloor$, (\ref{eq4.3}) holds.

$\mathbf{Case~2:}$ $\delta(G)=3$.

By Claim $4$, we have $|E(G[N(v)])|=1$ or $3$.

If $|E(G[N(v)])|=1$, without loss of generality, let $v_1v_2\in E(G)$. Then
$X_{13}=X_{23}=\emptyset$ and $U=X_{12}\cup X_{123}$. Now we show $X_{12}=\emptyset$. Otherwise, let $x\in X_{12}$. By Proposition \ref{pro4-1}, there exist at least two paths with length $2$ between $x$ and $v_3$, which implies $|N(x)\cap X_{123}|\geq2$, and thus $G$ contains a copy of $K_2\vee P_3$, say $Q$, where $C(Q)=\{v_1,v_2\}$ and $PP(Q)\subseteq\{x\}\cup (N(x)\cap X_{123})$, a contradiction.
Therefore, $U=X_{123}$. Clearly, $G[U]$ contains no copy of $P_3$. Now we show
$G[U]$ has no isolated vertex. If not, there exists a vertex $u\in U$ with $d_{G}(u)=3$, but $G+vu$ contains no copy of $K_2\vee P_3$, a contradiction.
Therefore, $G[U]\cong\frac{n-4}{2}K_2$ and $n(\geq6)$ is even, and thus $|E(G)|=4+3(n-4)+\frac{n-4}{2}=\frac{7}{2}n-10\geq \lfloor \frac{5n-8}{2} \rfloor $, (\ref{eq4.3}) holds, with equality if and only if $n=6$ and $G\cong H_9$.

If $|E(G[N(v)])|=3$, then $X_{123}=\emptyset$. If not, then $G[N[v]\cup X_{123}]$ contains a copy of $K_2\vee P_3$, a contradiction. By $\Delta(G)\leq n-2$, we have $X_{12}\neq\emptyset$, $X_{13}\neq\emptyset$ and $X_{23}\neq\emptyset$, which implies $n\geq 7$. By Claim $6$, $G[U]$ is connected, and thus $|E(G)|=6+2(n-4)+(|U|-1)=3n-7>\lfloor \frac{5n-8}{2} \rfloor$, (\ref{eq4.3}) holds.

$\mathbf{Case~3:}$ $\delta(G)=4$.

If $5\leq |E(G[N(v)])|\leq6$, then $G[N(v)]\cong K_4$ or $C_4^{+}$, and thus $G$ contains a copy of $K_2\vee P_3$, a contradiction.

If $|E(G[N(v)])|=1$, without loss of generality, let $v_1v_2\in E(G)$. Similar to the proof of Case $2$, we have $X_{12}=\emptyset$ and $U=X_{123}\cup X_{124}\cup X_{1234}$ with $|U|=n-5\geq3$ by $d_{G}(v_3)\geq\delta(G)=4$.
Since $v_3v_4\notin E(G)$ and $N(v_3)\cap N(v_4)\cap N(v)=\emptyset$, we have $|U_4|=|N(v_3)\cap N(v_4)\cap U|\geq2$ by Claim $2$.
Similar to Case $2$, we can show $G[U]\cong \frac{n-5}{2}K_2$ and $n(\geq9)$ is odd, thus
$|E(G)|=5+3|U_3|+4|U_4|+\frac{n-5}{2}\geq \frac{7n-21}{2}>\lfloor \frac{5n-8}{2} \rfloor$, (\ref{eq4.3}) holds.

If $|E(G[N(v)])|=2$, then $G[N(v)]\cong P_3\cup K_1$ or $G[N(v)]\cong 2K_2$.

When $G[N(v)]\cong P_3\cup K_1$, without loss of generality, let $v_1v_2,v_2v_3\in E(G)$.
Then $|U|=n-5\geq3$ by $d_{G}(v_4)\geq4$.
Since $G+v_1v_4$ and $G+v_3v_4$ must contain a copy of $K_2\vee P_3$, we have $|X_{124}\cup X_{1234}|\geq2$ and $|X_{234}\cup X_{1234}|\geq2$ by Claim $2$, thus $0.5|U_3|+|U_4|\geq 2$. Therefore, $|E(G)|\geq 6+2|U_2|+3|U_3|+4|U_4|+(|U_2|+0.5|U_3|)\geq3n-7>\lfloor \frac{5n-8}{2}\rfloor$ by (\ref{eq4.2}), (\ref{eq4.3}) holds.

When $G[N(v)]\cong 2K_2$, without loss of generality, let $v_1v_2,v_3v_4\in E(G)$.
Then $|U|=n-5\geq2$ by $d_{G}(v_1)\geq4$. Since $G+v_1v_4$ and $G+v_2v_3$ must contain a copy of $K_2\vee P_3$, we have $|X_{124}\cup X_{134}\cup X_{1234}|\geq2$ and $|X_{123}\cup X_{234}\cup X_{1234}|\geq2$ by Claim $2$, thus $0.5|U_3|+|U_4|\geq 2$. Therefore, $|E(G)|>\lfloor \frac{5n-8}{2}\rfloor$ by (\ref{eq4.2}), (\ref{eq4.3}) holds.

If $|E(G[N(v)])|=3$, then $G[N(v)]\not\cong K_{1,3}$ by Claim $3$, and thus $G[N(v)]\cong P_4$ or $G[N(v)]\cong K_3\cup K_1$. Thus $n\geq 7$, and there exist two vertices $v_i$ and $v_j$ such that $v_iv_j\notin E(G)$ and $N(v_i)\cap N(v_j)\cap N(v)=\emptyset$.
By Claim $2$, $|N(v_i)\cap N(v_j)\cap U|\geq2$, which implies $|U_3\cup U_4|\geq2$.
Therefore, $|E(G)|\geq 7+2|U_2|+3|U_3|+4|U_4|+(|U_2|+0.5|U_3|)\geq 3n-7>\lfloor \frac{5n-8}{2} \rfloor$ by (\ref{eq4.2}), (\ref{eq4.3}) holds.

If $|E(G[N(v)])|=4$, then $G[N(v)]\cong C_4$ or $G[N(v)]\cong K_{1,3}^{+}$, and thus $n\geq 6$ by $\delta(G)=4$. If $n=6$, then $G[N(v)]\cong C_4$, $|U|=|U_4|=1$, it follows that $|E(G)|=12>\lfloor \frac{5n-8}{2} \rfloor$. If $n\geq 7$, then $|E(G)|\geq 8+2|U_2|+3|U_3|+4|U_4|+(|U_2|+0.5|U_3|)\geq 3n-7>\lfloor \frac{5n-8}{2} \rfloor$ by (\ref{eq4.2}), (\ref{eq4.3}) holds.
{\hfill $\square$ \par}

\subsection{$\boldsymbol{k=4}$ with $\boldsymbol{3\leq \delta(G)\leq\Delta(G)\leq n-2}$}
In this part, we show that (\ref{eq4.3}) holds, say, $|E(G)|\geq\lfloor \frac{5n-8}{2} \rfloor+\sigma_{n-1}$, with equality if and only if $G\cong H_{10}$ and $n=7$, where $\sigma_{n-1}=2$ if $n$ is odd, and $0$ if $n$ is even.

$\mathbf{Case~1:}$ $\delta(G)\geq 5$.

Then $|E(G)|\geq \frac{5n}{2}>\lfloor \frac{5n-8}{2} \rfloor+\sigma_{n-1}$, (\ref{eq4.3}) holds.

$\mathbf{Case~2:}$ $\delta(G)=3$.

By Claim $4$, we have $|E(G[N(v)])|=1$ or $3$.

If $|E(G[N(v)])|=1$, without loss of generality, let $v_1v_2\in E(G)$. By Claim $5$, we have $|E(G)|\geq 3n-6\geq\lfloor \frac{5n-8}{2} \rfloor+\sigma_{n-1}$, with equality if and only if $n=7$.

When $n=7$ and $X_{12}\neq\emptyset$, then $|X_{123}|=2$ by $d_{G}(v_3)\geq\delta(G)=3$, thus $G\cong H_{10}$.

When $n=7$ and $X_{12}=\emptyset$, then $G[X_{123}]$ contains $P_3$ from the proof of Claim $5$. If $G[X_{123}]\cong P_3$, then there exists a vertex $x\in X_{123}$ such that $d_{G}(x)=5$. However $G+vx$ contains no copy of $K_{2}\vee P_{4}$, a contradiction. If $G[X_{123}]\cong K_3$, then $G[V(G)\backslash\{v\}]$ contains a copy of $K_{2}\vee P_{4}$, a contradiction.

If $|E(G[N(v)])|=3$, then $X_{12}\neq\emptyset$, $X_{13}\neq\emptyset$ and $X_{23}\neq\emptyset$ by $\Delta(G)\leq n-2$.

When $X_{123}=\emptyset$, then $G[U]$ is connected by Claim $6$. For $1\leq i<j\leq3$, $|X_{ij}|\geq 2$ since $G+vx$ must contain a copy of $K_{2}\vee P_{4}$, where $x\in X_{ij}$. Thus $n\geq 10$ and $|E(G)|\geq 6+2|U_2|+|U|-1=3n-7>\lfloor \frac{5n-8}{2} \rfloor+\sigma_{n-1}$, (\ref{eq4.3}) holds.

When $X_{123}\neq\emptyset$, then $d_{G}(u)=3$ for any $u\in X_{123}$. Otherwise, $G[N[u]\cup N[v]]$ contains a copy of $K_{2}\vee P_{4}$, a contradiction.
Moreover, $|X_{123}|\geq 2$. If not, $X_{123}=\{x\}$, but $G+vx$ contains no copy of $K_{2}\vee P_{4}$, a contradiction. Thus $n\geq9$, and $G[U_2]$ is connected by Claim $6$.

If $n=9$, then $|X_{12}|=|X_{13}|=|X_{23}|=1$ and $G[U_2]\cong P_3$ or $K_3$. When $G[U_2]\cong P_3$, we take $x_1x_2,x_2x_3\in E(G)$, where $\{x_1\}=X_{12}$, $\{x_2\}=X_{13}$ and $\{x_3\}=X_{23}$. However, $G+x_1x_3$ contains no copy of $K_{2}\vee P_{4}$, a contradiction. It follows that $G[U_2]\cong K_3$ and $|E(G)|=21>20=\lfloor \frac{5n-8}{2} \rfloor+\sigma_{n-1}$, (\ref{eq4.3}) holds.

If $n\geq 10$, then $|E(G)|\geq 6+2|U_2|+3|U_3|+|U_2|-1=3n-7>\lfloor \frac{5n-8}{2} \rfloor+\sigma_{n-1}$ by (\ref{eq4.2}), (\ref{eq4.3}) holds.

$\mathbf{Case~3:}$ $\delta(G)=4$.

If $|E(G[N(v)])|\in\{1,2,3,5,6\}$, then (\ref{eq4.3}) holds by Claims $8$-$12$ and Claim $15$.

If $|E(G[N(v)])|=4$, then $G[N(v)]\cong C_4$ or $G[N(v)]\cong K_{1,3}^{+}$, and $|E(G)|\geq 8+2|U_2|+3|U_3|+4|U_4|+(|U_2|+0.5|U_3|)=3n-7+0.5|U_3|+|U_4|$ by (\ref{eq4.2}).

If $|U_4|\geq2$, or $|U_4|=1$ with $|U_3|\geq1$, then $|E(G)|\geq 3n-5>\lfloor\frac{5n-8}{2} \rfloor+\sigma_{n-1}$, (\ref{eq4.3}) holds.

If $|U_3|+|U_4|=1$, then $n\neq7$. Otherwise, $n=7$ and $|U_2|=1$. Let $x\in U_2$. Then $d_{G}(x)\leq3<\delta(G)=4$, a contradiction.

If $|U_4|=1$ and $|U_3|=0$, then $|E(G)|\geq 3n-6>\lfloor\frac{5n-8}{2} \rfloor+\sigma_{n-1}$ by $n\neq7$, (\ref{eq4.3}) holds.

If $|U_4|=0$, we show (\ref{eq4.3}) holds by the following two subcases.

Let $G[N(v)]\cong C_4$ and $v_1v_2,v_2v_3,v_3v_4, v_4v_1\in E(G)$.
Similar to the proof of Claim $2$, we consider the center of the copy of $K_2\vee P_4$ in $G+v_1v_3$ and $G+v_2v_4$, then $|X_{123}\cup X_{134}|\geq 1$ and $|X_{124}\cup X_{234}|\geq 1$, which implies $|U|\geq|U_3|\geq2$. Now we show $|U|=n-5\geq3$. If not, then $|U|=|U_3|=2$, $|X_{123}\cup X_{134}|=1$ and $|X_{124}\cup X_{234}|=1$. When $|X_{123}|=1$, then $d_{G}(v_4)=\delta(G)=4$ and $|E(G[N(v_4)])=3$, a contradiction with the choice of $v$. When $|X_{134}|=1$, then $d_{G}(v_2)=\delta(G)=4$ and $|E(G[N(v_2)])=3$, a contradiction with the choice of $v$. Thus $|E(G)|\geq 3n-6>\lfloor\frac{5n-8}{2} \rfloor+\sigma_{n-1}$ by $n\geq8$, (\ref{eq4.3}) holds.

Let $G[N(v)]\cong K_{1,3}^{+}$ and $v_1v_2,v_2v_3, v_3v_1,v_3v_4\in E(G)$. Then $|U|=n-5\geq 2$ by $d_{G}(v_4)\geq 4$.
Similar to the proof of Claim $2$, we consider the center of the copy of $K_2\vee P_4$ in $G+v_1v_4$, we have $|X_{123}\cup X_{124}\cup X_{134}|\geq1$, which implies $|U_3|\geq1$.

Now we show $|U|\geq3$ and $n\geq8$. If $|U|=2$, then $|U_3|=2$ and $|U_2|=0$. Otherwise, $|U_3|+|U_4|=1$, which implies $n\neq7$, a contradiction with $n=5+|U|=7$.
On the other hand, $|X_{124}\cup X_{134}\cup X_{234}|\geq2$ by $d_{G}(v_4)\geq4$, $v_3v_4,vv_4\in E(G)$ and $|U_2|=0$. Then we have $|X_{124}\cup X_{134}\cup X_{234}|=2$ and $|X_{123}|=0$.

Clearly, $|X_{234}|\neq2$ by $|X_{124}\cup X_{134}|\geq1$ and $|U_3|=2$.
When $|X_{124}|=2$, then $d_{G}(v_4)=\delta(G)=4$ and $|E(G[N(v_4)])=2$, a contradiction with the choice of $v$. When $|X_{134}|=2$, then $d_{G}(v_2)=3$, a contradiction with $\delta(G)=4$. When $|X_{124}|=|X_{134}|=1$, then $d_{G}(v_4)=4$ and $|E(G[N(v_4)])=3$, a contradiction with the choice of $v$. When $|X_{124}|=|X_{234}|=1$, then $d_{G}(v_4)=4$ and $|E(G[N(v_4)])=3$, a contradiction with the choice of $v$. When $|X_{134}|=|X_{234}|=1$, we take $X_{234}=\{x\}$, then
$d_{G}(x)=4$ and $|E(G[N(x)])=3$, a contradiction with the choice of $v$.

Thus $|U|\geq3$, $n\geq8$, and $|E(G)|\geq 3n-6>\lfloor\frac{5n-8}{2} \rfloor+\sigma_{n-1}$, (\ref{eq4.3}) holds.
{\hfill $\square$ \par}

\subsection{$\boldsymbol{k=5}$ with $\boldsymbol{3\leq \delta(G)\leq\Delta(G)\leq n-2}$}
In this part, we show that (\ref{eq4.3}) holds strictly.

$\mathbf{Case~1:}$ $\delta(G)\geq 6$.

Then $|E(G)|\geq 3n>\lceil \frac{17n-32}{6} \rceil$, (\ref{eq4.3}) holds.

$\mathbf{Case~2:}$ $\delta(G)=3$.

By Claim $4$, we have $|E(G[N(v)])|=1$ or $3$.

If $|E(G[N(v)])|=1$, then (\ref{eq4.3}) holds by Claim $5$.

If $|E(G[N(v)])|=3$, then $X_{12}\neq\emptyset$, $X_{13}\neq\emptyset$ and $X_{23}\neq\emptyset$ by $\Delta(G)\leq n-2$.

When $X_{123}=\emptyset$, then for $1\leq i<j\leq3$, $|X_{ij}|\geq 2$ and $G[X_{ij}]$ has no isolated vertex by considering $G+vx$, where $x\in X_{ij}$, which implies $n\geq 10$.
For any $x_1\in X_{ij}$ and $x_2\in U\backslash X_{ij}$, $d_{G[U]}(x_1,x_2)\leq 2$ by Proposition \ref{pro4-1}. Thus $\sum_{u\in U}d(u)\geq 4(n-4)$, and $2|E(G)|\geq 12+2|U_2|+4(n-4)=6n-12>2\lceil \frac{17n-32}{6}\rceil$ by (\ref{eq4.1}), (\ref{eq4.3}) holds.

When $X_{123}\neq\emptyset$, for any $x\in X_{123}=U_3$, $Q$ is the copy of $K_2\vee P_5$ in $G+vx$. Then $v\in PP(Q)$ by $d_{G}(v)=3$, $x\in C(Q)$ since $G$ contains no copy of $K_2\vee P_5$, and $d_{G}(x)\geq5$ by $d_{G+vx}(x)\geq k+1$.
Furthermore, $N[v]\subset V(Q)$, $|V(Q)\cap X_{123}|=1$ and $|PP(Q)\cap U_2|=2$
since $G$ contains no copy of $K_2\vee P_5$. Let $y_1,y_2\in PP(Q)\cap U_2$. Then $G[\{w,y_1,y_2\}]\cong K_3$, and $|N(y_1)\cap N(y_2)\cap N(v)|=1$ since $G$ contains no copy of $K_2\vee P_5$, a contradiction.

Now we show $\sum_{u\in U}d(u)\geq 4|U_2|+5|U_3|$.

If $d_{G}(u)\geq 4$ for any $u\in U_2$, then the conclusion holds.

If $d_{G}(u)=3$ for some $u\in U_2$, without loss of generality, let $u\in X_{12}$ and $w$ be a neighbor of $u$ in $U$. Then $wz\in E(G)$ for any $z\in X_{13}\cup X_{23}$ since $uz\notin E(G)$ and there exists a path with length $2$ between $u$ and $z$ in $G[U]$ by Proposition \ref{pro4-1}.

Now we show $w\notin X_{13}\cup X_{23}$.
If $w\in X_{13}$, then $G+uy$ contains no copy of $K_2\vee P_5$ with $y\in X_{23}$, a contradiction.
If $w\in X_{23}$, we obtain a contradiction similarly by taking $y\in X_{13}$. Therefore, $w\in X_{12}\cup X_{123}$.

When $w\in X_{12}$, then $d_{G}(w)\geq 3+|X_{13}|+|X_{23}|\geq 5$. We can assign one degree of $w$ to $u$. Moreover, $d_{G}(z)\geq4$ for any $z\in X_{13}\cup X_{23}$. Otherwise, if $d_{G}(z)=3$ for some $z\in X_{13}\cup X_{23}$, by above arguments, $w\notin X_{12}$, a contradiction. For any $u_1\in X_{12}\backslash\{u\}$ with $u_1w\in E(G)$ and $d_{G}(u_1)=3$, it can be viewed as assigning one degree of $w$ to $u_1$.

When $w\in X_{123}$, then $d_{G}(u)+d_{G}(w)\geq 7+|X_{13}|+|X_{23}|\geq9$. We can assign one degree of $w$ to $u$. By considering $G+vw$, there exist $y_1,y_2\in U_2$ such that $G[\{w,y_1,y_2\}]\cong K_3$, thus $d_{G}(y_i)\geq4$ for $i\in\{1,2\}$. Therefore, for any $u_2\in U_2\backslash\{u\}$ with $u_2w\in E(G)$ and $d_{G}(u_2)=3$, it can be viewed as assigning one degree of $w$ to it.

Therefore, $\sum_{u\in U}d(u)\geq 4|U_2|+5|U_3|$, and $2|E(G)|= 12+2|U_2|+3|U_3|+\sum_{u\in U}d(u)\geq 12+6(n-4)+2|U_3|\geq6n-10>2\lceil \frac{17n-32}{6} \rceil$ by (\ref{eq4.2}), (\ref{eq4.3}) holds.

$\mathbf{Case~3:}$ $\delta(G)=4$.

By the discussion of $G[N(v)]$ and Claims $8$-$15$, (\ref{eq4.3}) holds.

$\mathbf{Case~4:}$ $\delta(G)=5$.

We know $n\geq a_{k}+2=7$. Recall (\ref{eq4.9}),
$2|E(G)|\geq 7n+2|E(G[N(v)])|-32+|U_3|+2|U_4|+3|U_5|$.

If $1\leq |E(G[N(v)])|\leq 2$, then $\delta(G[N(v)])=0$, and thus $n\geq 10$ by $d_{G}(v_i)\geq 5$, where $v_i\in N(v)$ with $d_{G[N(v)]}(v_i)=0$. We can obtain that $2|E(G)|\geq 7n-23$ by Subcases $4.1$-$4.2$ of $k\geq6$, and thus $|E(G)|\geq \lceil \frac{7n-23}{2} \rceil>\lceil \frac{17n-32}{6} \rceil$ by $n\geq 10$, (\ref{eq4.3}) holds.

If $|E(G[N(v)])|=3$, then $G[N(v)]\in\{K_{1,3}\cup K_1,K_3\cup 2K_1,P_4\cup K_1,P_3\cup P_2\}$. When $G[N(v)]\in\{K_{1,3}\cup K_1,K_3\cup 2K_1,P_4\cup K_1\}$, then $n\geq10$. Similar to Subcase $4.3$ of $k\geq6$ and taking $k=5$, we have $2|E(G)|\geq 7n-23$, (\ref{eq4.3}) holds. When $G[N(v)]\cong P_3\cup P_2$, then $n\geq9$. If $n=9$, then $|U_5|=3$, or $|U_5|=2$ with $|U_4|=1$ by $\delta(G)=5$, which implies $|U_3|+2|U_4|+3|U_5|\geq8$, thus $2|E(G)|\geq 7n-18$ by (\ref{eq4.9}), (\ref{eq4.3}) holds. If $n\geq10$, then $2|E(G)|\geq 7n-23$ similar to Subcase $4.3$ of $k\geq6$, (\ref{eq4.3}) holds.

If $|E(G[N(v)])|=4$, then $G[N(v)]\not\cong K_{1,4}$ by Claim $3$, and $\delta(G[N(v)])\leq 1$, which implies $n\geq9$ by $\delta(G)=5$. When $n\geq10$, similar to Case $4$ of $|E(G[N(v)])|=4$ with $k\geq6$, we have $2|E(G)|\geq 7n-22$, (\ref{eq4.3}) holds. When $n=9$, then $\delta(G[N(v)])=1$ and $G\in\{K_3\cup K_2,P_5,K_{1,3_{+}}\}$, where $K_{1,3_{+}}$ is a graph obtained by adding a pendant edge to a vertex of degree $1$ in $K_{1,3}$, and thus $|U_3|=|U_4|=|U_5|=1$ or $|U_4|=3$, which implies $|U_3|+2|U_4|+3|U_5|\geq6$, and $2|E(G)|\geq 7n-18$, (\ref{eq4.3}) holds.

If $|E(G[N(v)])|=5$, then $\delta(G[N(v)])\leq 2$. When $\delta(G[N(v)])\leq1$, we have $n\geq9$.
When $\delta(G[N(v)])=2$, we have $G[N(v)]\cong C_5$ and $n\geq8$.
If $n=8$, then $|U_5|=2$, but $G+v_iv_j$ contains no copy of $K_2\vee P_5$, where $v_iv_j\notin E(G)$, a contradiction. If $n=9$, then $U=U_3\cup U_4\cup U_5$, thus $2|E(G)|\geq 7n-19$, (\ref{eq4.3}) holds. If $n\geq10$, then $2|E(G)|\geq 7n-22$, (\ref{eq4.3}) holds.

If $|E(G[N(v)])|=6$, then $\delta(G[N(v)])\leq 3$. When $\delta(G[N(v)])\leq 1$, then $n\geq 9$, and thus $|E(G)|\geq \lceil \frac{7n-20}{2}\rceil>\lceil \frac{17n-32}{6}\rceil$. When $\delta(G[N(v)])=2$, then $n\geq 8$ and $|U_3|+|U_4|+|U_5|\geq1$ since there exists a copy of $K_{2}\vee P_{5}$ in $G+v_iv_j$, where $v_iv_j\notin E(G)$, and thus $|E(G)|\geq \lceil \frac{7n-19}{2}\rceil>\lceil \frac{17n-32}{6}\rceil$. Therefore, (\ref{eq4.3}) holds.

If $|E(G[N(v)])|\geq 7$, then $2|E(G)|\geq 7n-18$ by (\ref{eq4.9}), thus $|E(G)|\geq \lceil \frac{7n-18}{2} \rceil>\lceil \frac{17n-32}{6} \rceil$, (\ref{eq4.3}) holds.

Based on above arguments, we complete the proof of Theorem \ref{thm1-1}.
{\hfill $\square$ \par}

\section{Further research and problems}
In this section, we study the minimal $P_{k}$-saturated graphs for $k\in\{3,4\}$, and propose two problems worthy of further research.

For $k\in\{1,2\}$, $K_{s}\vee P_{k}\cong K_{s+k}$, and its saturation number has been determined in \cite{Erdos}.
For $k\geq3$, the saturation numbers of $K_1\vee P_k$ and $K_2\vee P_k$ have been determined in \cite{Hu2025} and Theorem \ref{thm3-3}, and Theorem \ref{thm1-1}, respectively.
For $k\geq3$ and $s\geq3$, we have the following result.

\begin{prop}\label{prop5-4}
For $n\geq k+s$ and $k,s\geq3$, $sat(n,K_{s}\vee P_{k})\leq \binom{s}{2}+s(n-s)+sat(n-s,P_{k})$.
\end{prop}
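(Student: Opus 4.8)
The plan is to realize the bound by iterating Corollary~\ref{cor2-4}, exactly as the two-fold application yields Proposition~\ref{prop2-5}; equivalently, the extremal candidate is $K_s\vee F$, where $F$ is a minimal $P_k$-saturated graph of order $n-s$, and the whole statement amounts to tracking the edge count along a chain of such universal-vertex extensions. Concretely, I would argue by induction on $s\geq 1$ that $sat(n,K_s\vee P_k)\leq \binom{s}{2}+s(n-s)+sat(n-s,P_k)$ whenever $n\geq k+s$. The base case $s=1$ is precisely Corollary~\ref{cor2-4} applied with $H=P_k$, since $\binom{1}{2}+(n-1)=n-1$ and the order requirement $n\geq |V(P_k)|+1=k+1$ is implied by $n\geq k+1$.

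For the inductive step I would use the identity $K_s\vee P_k=K_1\vee(K_{s-1}\vee P_k)$ and set $H=K_{s-1}\vee P_k$, a graph of order $(s-1)+k$. Since $n\geq k+s=|V(H)|+1$, Corollary~\ref{cor2-4} gives
\[
sat(n,K_s\vee P_k)\leq (n-1)+sat(n-1,K_{s-1}\vee P_k).
\]
Applying the induction hypothesis at order $n-1$ (legitimate because $n-1\geq k+(s-1)$ is again just $n\geq k+s$) bounds the second term by $\binom{s-1}{2}+(s-1)(n-s)+sat(n-s,P_k)$. It then remains only to verify the arithmetic identity
\[
(n-1)+\binom{s-1}{2}+(s-1)(n-s)=\binom{s}{2}+s(n-s),
\]
which follows from $\binom{s}{2}-\binom{s-1}{2}=s-1$ and $s(n-s)-(s-1)(n-s)=n-s$, so that the two sides differ by $(s-1)+(n-s)-(n-1)=0$.

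The argument has no deep obstacle; the only point requiring care is the applicability threshold of Corollary~\ref{cor2-4} at each level of the recursion. At the step reducing $K_j\vee P_k$ to $K_{j-1}\vee P_k$ one needs the current order to be at least $|V(K_{j-1}\vee P_k)|+1=k+j$, and since in the chain the order has dropped to $n-(s-j)$ while $j$ ranges down to $1$, each such requirement simplifies to precisely $n\geq k+s$. Thus the single hypothesis $n\geq k+s$ suffices for the entire sequence of $s$ applications, the accumulated linear terms telescope to $\binom{s}{2}+s(n-s)$, and the proof of Proposition~\ref{prop5-4} is complete.
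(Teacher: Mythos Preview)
Your proof is correct and essentially identical to the paper's: both iterate Corollary~\ref{cor2-4} along the decomposition $K_s\vee P_k=K_1\vee(K_{s-1}\vee P_k)$, with the paper writing the chain via ``$\cdots$'' and you phrasing it as a formal induction on $s$. Your version has the minor advantage of explicitly verifying the order threshold $n-(s-j)\geq k+j$ at each step and carrying out the telescoping arithmetic, details the paper leaves implicit.
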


\begin{proof}
Observed that $K_{s}\vee P_{k}=K_{1}\vee(K_{s-1}\vee P_{k})=K_{1}\vee K_{1}\vee(K_{s-2}\vee P_{k})=\cdots=\underbrace{K_{1}\vee K_{1}\vee\cdots\vee K_{1}}_{s}\vee P_{k}$. By Corollary \ref{cor2-4}, we have
\vspace{-0.6cm}
\begin{align*}
sat(n,K_{s}\vee P_{k})
&\leq n-1+sat(n-1,K_{s-1}\vee P_{k})\\
&=n-1+n-2+sat(n-2,K_{s-2}\vee P_{k})\\
  &=\cdots\\
  &=\binom{s}{2}+s(n-s)+sat(n-s,P_{k}).
\qedhere
\end{align*}
\end{proof}

Based on the results of $k\in\{1,2\}$, $s=1$, $s=2$ and Proposition \ref{prop5-4}, we can propose the following conjecture.

\begin{conj}
Let $a_{k}$ be defined as in $\mathrm{(\ref{eq1.1})}$, $k,s\geq3$ and $n\geq a_k+s$. Then $sat(n, K_{s}\vee P_{k})=\binom{s}{2}+s(n-s)+sat(n-s,P_{k})$.
\end{conj}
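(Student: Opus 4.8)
The bound $sat(n,K_{s}\vee P_{k})\leq \binom{s}{2}+s(n-s)+sat(n-s,P_{k})$ is exactly Proposition \ref{prop5-4}, so the plan is to establish the matching lower bound by induction on $s$, exploiting the factorization $K_{s}\vee P_{k}=K_{1}\vee(K_{s-1}\vee P_{k})$. The base cases $s=1$ and $s=2$ are Theorems \ref{thm3-3} and \ref{thm1-1}, and the threshold $n\geq a_k+s$ is tuned so that $n-1\geq a_k+(s-1)$, i.e. the inductive hypothesis applies at order $n-1$. For the inductive step let $G$ be a $K_{s}\vee P_{k}$-saturated graph of order $n$, and split on $\Delta(G)$, mirroring the two-case structure of the $s=2$ proof.

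If $\Delta(G)=n-1$, fix a conical vertex $u$. By Lemma \ref{lem2-3} with $H=K_{s-1}\vee P_{k}$, the graph $G[V(G)\setminus\{u\}]$ is $(K_{s-1}\vee P_{k})$-saturated, so Proposition \ref{pro3-1*} gives $|E(G)|\geq (n-1)+sat(n-1,K_{s-1}\vee P_{k})$. Substituting the inductive hypothesis and using the identity
\[
(n-1)+\binom{s-1}{2}+(s-1)(n-s)=\binom{s}{2}+s(n-s)
\]
yields $|E(G)|\geq\binom{s}{2}+s(n-s)+sat(n-s,P_{k})$, with equality forcing $G\cong K_{1}\vee F'$ for a minimal $(K_{s-1}\vee P_{k})$-saturated $F'$; unfolding the recursion identifies the extremal graphs as $K_{s}\vee F$ with $F$ a minimal $P_{k}$-saturated graph. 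This case is routine once the arithmetic is checked.

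The substance lies in the case $\Delta(G)\leq n-2$, where one must show $|E(G)|$ strictly exceeds the target so that the minimum is attained only in the conical case. The tools from the $s=2$ argument generalize in principle. Proposition \ref{pro4-1} becomes ``any two non-adjacent vertices of $G$ have at least $s$ common neighbors'': examining where the added edge lands in the saturating copy $Q$ of $K_{s}\vee P_{k}$, if both endpoints lie in the center one recovers all $k$ path-vertices together with the remaining $s-2$ center vertices, if one lies in the center and one in the path one recovers the $s-1$ other center vertices together with a path-neighbor, and if both lie in the path one recovers all $s$ center vertices; in every case at least $s$ common neighbors survive in $G$. This immediately gives $diam(G)=2$ and $\delta(G)\geq s$. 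Corollary \ref{cor4-2} and Claims $1$–$2$ in the proof of Theorem \ref{thm1-1} have direct analogues controlling where the center $K_{s}$ of a saturating copy can sit. One then fixes a minimum-degree vertex $v$, writes $V(G)=N[v]\cup U$, partitions $U$ by $|N(u)\cap N(v)|$ into the classes $X_{t_1\cdots t_i}$, and runs the edge-counting identities $(\ref{eq4.1})$ and $(\ref{eq4.2})$.

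The main obstacle is precisely this last step for general $s$. A degree bound alone does not suffice: since $sat(n-s,P_{k})=\Theta(n)$, the target grows like $(s+1)n$, whereas $\delta(G)\geq s$ only forces $|E(G)|\gtrsim\tfrac{s}{2}n$, so the decisive edges are those forced inside $U$ by the saturation condition—exactly what the fifteen claims extract for $s=2$. For $s=2$ the analysis branches on $\delta(G)\in\{3,4,5\}$ and on the isomorphism type of $G[N(v)]$; for larger $s$ the neighborhood $G[N(v)]$ admits far more types and the partition of $U$ has exponentially many classes, so the enumeration does not obviously close in bounded form. What seems to be needed is a structural lemma, \emph{uniform in $s$}, asserting that in the non-conical case each vertex of $U$ carries degree close to $s+1$ while $G[U]$ already contains a $P_{k}$-saturated-type subgraph accounting for $sat(n-s,P_{k})$ edges. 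Producing such a lemma without the explicit case analysis used for $s=2$ is the crux, and is presumably why the statement is posed as a conjecture rather than proved.
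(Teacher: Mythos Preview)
The statement is a \emph{conjecture} in the paper, not a theorem: the paper provides no proof, only the upper bound (Proposition~\ref{prop5-4}) and then poses the matching lower bound as an open problem. So there is no ``paper's own proof'' to compare against.

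Your proposal is not a proof either, and to your credit you say so explicitly in the final paragraph. What you have written is a correct outline of the natural inductive strategy: the conical case $\Delta(G)=n-1$ does reduce cleanly to $s-1$ via Lemma~\ref{lem2-3} and Proposition~\ref{pro3-1*}, and the arithmetic identity you record is right. Your generalization of Proposition~\ref{pro4-1} to ``at least $s$ common neighbors'' is also correct and gives $\delta(G)\geq s+1$ (not merely $\delta(G)\geq s$, since each vertex has a non-neighbor when $\Delta(G)\leq n-2$, hence $s$ common neighbors plus that non-neighbor's witness path forces one more; check this). But as you observe, the entire weight of the argument sits in the non-conical case, where the $s=2$ proof relies on an exhaustive case analysis over $\delta(G)\in\{3,4,5\}$ and the isomorphism type of $G[N(v)]$, and no uniform-in-$s$ substitute is known. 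Your diagnosis of the obstruction---that the minimum-degree bound alone gives edge count of order $\tfrac{s}{2}n$ while the target is of order $(s+1)n$, so the missing edges must be extracted from $G[U]$---is exactly right, and identifying this gap is the honest thing to do. The proposal is a sound research plan, not a proof, and the paper agrees.
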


By Theorems \ref{thm1-1}, \ref{thm2-2} and \ref{thm3-3}, the characterizations of the minimal $K_2\vee P_k$-saturated graphs and the minimal $K_1\vee P_k$-saturated graphs are highly dependent on the characterization of minimal $P_{k}$-saturated graphs.
In \cite{Kaszonyi}, Theorem \ref{thm2-1} does not provide a complete characterization of the minimal $P_{k}$-saturated graphs, but show that all minimal $P_{k}$-saturated trees for $k\geq5$ have a common structure. Thus the minimal $P_{k}$-saturated graphs are worth studying.
We study the minimal $P_{k}$-saturated graphs for $k\in\{3,4\}$, obtain the following propositions.

\begin{prop}\label{prop5-6}
Let $G$ be a graph of order $n\geq3$. Then $G$ is a minimal $P_{3}$-saturated graph if and only if $G\cong\lfloor \frac{n}{2} \rfloor K_2\cup (n-2\lfloor \frac{n}{2} \rfloor)K_1$.
\end{prop}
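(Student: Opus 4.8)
The plan is to prove both directions of the equivalence by exploiting the very simple structure of $P_3$-saturation. The key observation is that a graph contains no copy of $P_3$ if and only if it has maximum degree at most $1$, that is, if and only if it is a disjoint union of edges and isolated vertices (a matching together with isolated vertices). So $G$ being $P_3$-free already forces $G$ to be of the form $m K_2 \cup (n-2m)K_1$ for some $m$ with $0 \le m \le \lfloor n/2 \rfloor$.

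For the forward direction, suppose $G$ is a minimal $P_3$-saturated graph. First I would use the $P_3$-free condition to write $G \cong m K_2 \cup (n-2m)K_1$ as above. The saturation condition says that adding any non-edge $e$ creates a $P_3$; since a $P_3$ appears as soon as some vertex attains degree $2$, adding $e=xy$ must raise the degree of $x$ or $y$ to $2$. This will force that $G$ has at most one isolated vertex: if there were two distinct isolated vertices $x,y$, then $xy$ is a non-edge whose addition only creates an isolated edge (both endpoints reach degree $1$), so $G+xy$ is still $P_3$-free, contradicting saturation. Hence $n-2m \le 1$, which pins down $m=\lfloor n/2 \rfloor$ and the number of isolated vertices as $n-2\lfloor n/2 \rfloor \in \{0,1\}$, giving exactly $G \cong \lfloor n/2 \rfloor K_2 \cup (n-2\lfloor n/2 \rfloor)K_1$.

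For the reverse direction, I would verify directly that $G \cong \lfloor n/2 \rfloor K_2 \cup (n-2\lfloor n/2 \rfloor)K_1$ is $P_3$-saturated and minimal. It is $P_3$-free since $\Delta(G)\le 1$. For saturation, any non-edge $e=xy$ joins two vertices at least one of which already lies in a $K_2$ (because at most one vertex is isolated), so adding $e$ produces a vertex of degree $2$ and hence a $P_3$; thus $G$ is $P_3$-saturated. Finally, $|E(G)| = \lfloor n/2 \rfloor = sat(n,P_3)$ by Theorem \ref{thm2-1}(i), so $G$ is minimal. Since the structure is uniquely determined, this also shows uniqueness.

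The only mild subtlety (the ``main obstacle,'' though it is quite light here) is the careful handling of the isolated-vertex count: one must argue that saturation forbids two isolated vertices while the $P_3$-free condition forbids any vertex of degree $\ge 2$, and then check that these two constraints together force the matching to be as large as possible, namely a perfect or near-perfect matching. Everything else is a direct structural consequence of $\Delta(G)\le 1$, so no nontrivial computation is required.
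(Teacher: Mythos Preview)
Your proposal is correct and follows essentially the same approach as the paper's proof: both arguments observe that $P_3$-freeness forces every component to have at most two vertices (equivalently $\Delta(G)\le 1$), that saturation rules out having two isolated vertices, and that minimality then follows from $|E(G)|=\lfloor n/2\rfloor=sat(n,P_3)$ via Theorem~\ref{thm2-1}(i). The only cosmetic difference is that the paper phrases the first step as ``each component has fewer than $3$ vertices'' rather than ``$\Delta(G)\le 1$,'' but these are equivalent.
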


\begin{proof}
If $G\cong\lfloor \frac{n}{2} \rfloor K_2\cup (n-2\lfloor \frac{n}{2} \rfloor)K_1$, then $G$ contains no copy of $P_3$. For any $e\in E(\overline{G})$, $G+e$ has a copy of $P_3$, then $G$ is a $P_3$-saturated graph, and thus $G$ is a minimal $P_{3}$-saturated graph by $|E(G)|=\lfloor \frac{n}{2} \rfloor$ and (i) of Theorem \ref{thm2-1}.

If $G$ is a minimal $P_{3}$-saturated graph, then the number of vertices in each component of $G$ is less than 3. Otherwise, $G$ contains a copy of $P_3$, a contradiction. Clearly, $G$ has at most one isolated vertex. If not, let $u,v$ be two isolated vertices. Then $G+uv$ contains no copy of $P_3$, a contradiction. Therefore, $G\cong\lfloor \frac{n}{2} \rfloor K_2\cup (n-2\lfloor \frac{n}{2} \rfloor)K_1$.
\end{proof}

\begin{prop}\label{prop5-7}
Let $G$ be a graph of order $n\geq4$. Then $G$ is a minimal $P_{4}$-saturated graph if and only if $G\cong \frac{n}{2}K_2$ for even $n$, $G\cong K_3\cup\frac{n-3}{2}K_2$ or $G\cong K_{1,4}\cup\frac{n-5}{2}K_2$ for odd $n$.
\end{prop}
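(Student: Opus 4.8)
For the sufficiency direction I would verify directly that each listed graph is $P_4$-saturated and has exactly $sat(n,P_4)$ edges, so that it is minimal by Theorem~\ref{thm2-1}(ii). None of $K_2$, $K_3$, $K_{1,4}$ contains a copy of $P_4$, so none of the three disjoint unions does. For saturation one checks every non-edge: an edge joining two $K_2$'s, say $uu'$ and $vv'$, yields $u'uvv'\cong P_4$; an edge from a $K_2$-vertex to a vertex of the $K_3$ or of the $K_{1,4}$ yields a $P_4$ running through that $K_2$; and the only internal non-edge of $K_{1,4}$, between two leaves $\ell_i,\ell_j$, yields $\ell_k c\,\ell_i\ell_j$ (a $P_4$) because a third leaf $\ell_k$ exists. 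The edge counts are $\tfrac n2$, $\tfrac{n+3}2$ and $\tfrac{n+3}2$, matching Theorem~\ref{thm2-1}(ii).

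For necessity, the first move is a structural reduction: since $G$ is $P_4$-free, every component is a star $K_{1,m}$ $(m\ge 0)$ or a triangle $K_3$. A component with a cycle has girth $3$ (a cycle of length $\ge 4$ contains $P_4$), and any vertex outside that triangle but adjacent to it completes a $P_4$, forcing the component to equal $K_3$; an acyclic component is a tree of diameter at most $2$, hence a star. Saturation then prunes this list: no component is $P_3=K_{1,2}$, since adding the non-edge between its two leaves only creates a $K_3$; there is at most one isolated vertex, since the edge between two isolated vertices creates no $P_4$; and if an isolated vertex is present then every other component is a $K_3$, because joining the isolated vertex to a star-center merely enlarges the star without producing $P_4$.

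The quantitative heart is a weighting argument. I would set $w(C)=|E(C)|-\tfrac12|V(C)|$, so that $|E(G)|=\tfrac n2+\sum_C w(C)$, and record $w(K_2)=0$, $w(K_3)=w(K_{1,4})=\tfrac32$, $w(K_{1,m})=\tfrac{m-1}2\ge 1$ for $m\ge 3$, and $w(K_1)=-\tfrac12$. For even $n$ the total weight is forced to $0$: the isolated-vertex family (one $K_1$ plus triangles) has weight $\tfrac32 t-\tfrac12>0$ for $n\ge 4$, so $G$ has no isolated vertex, every weight is then nonnegative and vanishes only for $K_2$, whence $G\cong\tfrac n2K_2$. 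For odd $n$, the number of odd-order components is odd, hence at least one, so the total weight is at least $\tfrac32$, the least weight of an odd-order component; equality forces exactly one such component, of weight $\tfrac32$ (so $K_3$ or $K_{1,4}$), with all other components of weight $0$ (so $K_2$), giving precisely the two listed graphs. The isolated-vertex family is discarded here because its weight $\tfrac{n-2}2$ exceeds $\tfrac32$ for $n\ge 7$ and is unrealizable for smaller odd $n$.

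The main obstacle I expect is the odd-$n$ minimization, where the parity constraint and the weight bound must be tracked together. In particular one must rule out the low-weight but even-order star $K_{1,3}$ (weight $1$): it cannot by itself repair parity, so any configuration using it still needs an odd-order component and thus exceeds $\tfrac32$. The extremal value $\tfrac32$ must be shown to be attained by exactly two components, $K_3$ and $K_{1,4}$, and by no other, which is what produces the precise two-case answer rather than a one-parameter family.
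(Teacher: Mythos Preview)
Your proof is correct and takes a genuinely different route from the paper's. The paper first shows directly that $G$ has no isolated vertex (by tracking the $P_4$ created in $G+uw$ for an isolated $u$ and concluding every other component would be a $K_3$, which is too expensive), then for even $n$ uses the trivial degree-sum bound $|E(G)|\ge n/2$, and for odd $n$ picks an odd-order component $G_1$, argues that the remaining components form a minimal $P_4$-saturated graph of even order (hence a perfect matching), and finishes by observing that a $P_4$-free connected graph on at least five vertices has diameter $2$ and so is a star, forcing $G_1\in\{K_3,K_{1,4}\}$.

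Your approach front-loads the structure: you classify all components of any $P_4$-free graph as stars or $K_3$, then use saturation to discard $K_{1,2}$ and to restrict the isolated-vertex case to $K_1\cup tK_3$, and finally optimize via the weight $w(C)=|E(C)|-\tfrac12|V(C)|$ together with the parity of the number of odd-order components. This is cleaner in one respect: it avoids the paper's slightly delicate step of asserting that the complement of $G_1$ is itself a \emph{minimal} saturated graph, which you replace by the transparent inequality $\sum_C w(C)\ge \tfrac32$ with equality analysis. The paper's route, on the other hand, is shorter once isolated vertices are excluded and never needs to enumerate the star/$K_3$ dichotomy explicitly.
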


\begin{proof}
If $n$ is even and $G\cong \frac{n}{2}K_2$, then $G$ contains no copy of $P_4$. For any $e\in E(\overline{G})$, $G+e$ has a copy of $P_4$, then $G$ is a $P_4$-saturated graph, and thus $G$ is a minimal $P_{4}$-saturated graph by $|E(G)|=\frac{n}{2}$ and (ii) of Theorem \ref{thm2-1}. Similarly, if $n$ is odd, $G\cong K_3\cup\frac{n-3}{2}K_2$ or $G\cong K_{1,4}\cup\frac{n-5}{2}K_2$, then $G$ is a $P_4$-saturated graph, and thus $G$ is a minimal $P_{4}$-saturated graph by $|E(G)|=\frac{n+3}{2}$ and (ii) of Theorem \ref{thm2-1}.

Let $G$ be a minimal $P_{4}$-saturated graph of order $n$, $G_1,G_2,\ldots,G_t$ be  all the components of $G$. Then $|E(G)|=sat(n,P_4)$. We claim that $G$ has no isolated vertices. Otherwise, if $G$ has a isolated vertex, say $u$, then $t\geq2$. Without loss of generality, suppose $V(G_1)=\{u\}$. Since $G+uv$ contains a copy of $P_4$ for any $v\in V(G)\backslash \{u\}$, it follows that $|V(G_{i})|\geq 3$ for any $i\in\{2,3,\ldots,t\}$. Let $w_1\in V(G_i)$ for $i\in\{2,3,\ldots,t\}$, and $uw_1w_2w_3$ be a copy of $P_4$ in $G+uw_1$. We know $G+uw_2$ also contains a copy of $P_4$, say $P$. If $w_1$ or $w_3$ is not in $P$, then $P-\{u\}+w_1w_2$ or $P-\{u\}+w_2w_3$ is a copy of $P_4$ in $G_i$, a contradiction. Thus $w_1,w_3\in V(P)$ and $w_1w_3\in E(G)$, which implies $G[\{w_1,w_2,w_3\}]\cong K_3$. Then $|V(G_{i})|=3$ since $G_i$ contains no copy of $P_4$, and thus $G_{i}\cong K_3$ for any $i\in\{2,3,\ldots,t\}$, which implies $n=3t-2$, $|E(G)|=3t-3=n-1>sat(n,P_4)$, a contradiction.
Therefore, $G$ has no isolated vertices, say, $\delta(G)\geq1$.

If $n$ is even, then $|E(G)|=\frac{1}{2}\sum_{v\in V(G)}d(v)\geq\frac{n}{2}\delta(G)\geq\frac{n}{2}$, the equality holds if and only if $d(v)=1$ for any $v\in V(G)$. On the other hand, by (ii) of Theorem \ref{thm2-1}, we know $|E(G)|=\frac{n}{2}$, and then $G\cong \frac{n}{2}K_2$.

If $n$ is odd, then there exists a component with odd vertices. Without loss of generality, let $|V(G_1)|$ be odd. Then $|V(G_1)|\geq3$ and $\sum_{i=2}^{t}|V(G_i)|$ is even. If $t=2$, then $G_2\cong K_2$. If $t\geq 3$, then $G_2\cup G_3\cdots\cup G_t$ is $P_{4}$-saturated graph. Since $G$ is a minimal $P_{4}$-saturated graph, $G_2\cup G_3\cup\cdots\cup G_t$ is also a minimal $P_{4}$-saturated graph, and so $G_2\cong G_3\cup\cong\cdots\cong G_t\cong K_2$ since $\sum_{i=2}^{t}|V(G_i)|$ is even.
If $|V(G_1)|=3$, then $G_1\cong K_3$, and $G\cong K_3\cup\frac{n-3}{2}K_2$.
If $|V(G_1)|\geq 5$, then $diam(G_1)=2$ since $G_1$ contains no copy of $P_4$, and thus $G_1\cong K_{1,|V(G_1)|-1}$. Therefore, $|E(G)|=\frac{1}{2}\sum_{v\in V(G)}d(v)=\frac{1}{2}(|V(G_1)|-1+n-1)\geq\frac{n+3}{2}$, the equality holds if and only if $|V(G_1)|=5$, which implies $G\cong K_{1,4}\cup\frac{n-5}{2}K_2$.
\end{proof}

For $k\geq 5$, the minimal $P_{k}$-saturated graphs are complicated, and there exist many non-isomorphic graphs. Therefore, the minimal $P_{k}$-saturated graphs for $k\geq 5$ are worth studying, then we propose the following problem for further research.

\begin{pro}
Determine all minimal $P_{k}$-saturated graphs for $k\geq 5$.
\end{pro}

\section{Conclusion}
In this paper, by using different methods from Theorem \ref{thm2-2}, we show that $sat(n, K_{2}\vee P_{k})=2n-3+sat(n-2,P_{k})$ for $n\geq a_k+2$ and $k\geq 3$. Combining Theorem \ref{thm2-2}, we obtain that $sat(n, K_{1}\vee P_{k})=n-1+sat(n-1,P_{k})$ for $n\geq a_k+1$ and $k\geq 3$, which improves the bounds on $n$ in Theorem \ref{thm2-2}. Moreover, we characterize the minimal $K_{2}\vee P_{k}$-saturated graphs for $k\geq 3$, the minimal $K_{1}\vee P_{k}$-saturated graphs for $3\leq k\leq5$ and the minimal $P_{k}$-saturated graphs for $3\leq k\leq4$.

A \emph{book} $B_p$ is a union of $p$ triangles sharing one edge. A \emph{generalized book} $B_{b,p}$ is the union of $p$ copies of $K_{b+1}$ sharing a common $K_{b}$. The saturation numbers of a book and a generalized book have been determined by Chen, Faudree and Gould \cite{ChenandFaudree}. In \cite{Faudree2013}, Faudree and Gould determined the saturation number of the graph $K_t-P_4$.
The results are as follows.

\begin{prop}[\rm\!\!\cite{ChenandFaudree}]\label{prop5-1}
For $n\geq p^3+p$, $sat(n,B_p)= \frac{1}{2}((p+1)(n-1)-\lfloor \frac{p}{2} \rfloor \lfloor \frac{p}{2} \rfloor + \theta(n, p))$, where $\theta(n, p)=1$ if $p \equiv n-p/2 \equiv 0(\bmod 2)$, and $0$ otherwise.
\end{prop}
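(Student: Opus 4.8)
The plan rests on the identification $B_p \cong K_2 \vee \overline{K_p}$: a graph $G$ contains a copy of $B_p$ exactly when some edge carries at least $p$ triangles, so writing $t(uv)=|N(u)\cap N(v)|$ for the number of triangles on $uv$, being $B_p$-free means $t(uv)\le p-1$ for every $uv\in E(G)$. Note the join results quoted earlier do not apply here, since $\overline{K_p}$ consists entirely of isolated vertices, so the book must be treated on its own terms. First I would record the saturation dichotomy: for a non-edge $xy$, the graph $G+xy$ contains $B_p$ if and only if either $|N(x)\cap N(y)|\ge p$ (then $xy$ itself is a spine with $p$ pages), or there is a common neighbour $z\in N(x)\cap N(y)$ with $t(xz)=p-1$ or $t(yz)=p-1$ (then the new edge supplies the last page to an already-full spine $xz$ or $yz$). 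This dichotomy drives both bounds.

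For the upper bound I would begin from a windmill-type graph: take $k$ copies of $K_{p+1}$ all sharing a single common vertex $c$, on $n=1+kp$ vertices. Every edge of each copy lies in exactly $p-1$ triangles, so the graph is $B_p$-free, and the dichotomy shows it is saturated, since adding an edge $uv$ between distinct copies leaves only $c$ as a common neighbour of $u,v$, yet $c$ completes the already-full spine $cu$ to $p$ pages. This graph has $\tfrac{(p+1)(n-1)}{2}$ edges. The genuine extremal graph improves on it: by overlapping one gadget more densely (and adjusting a single component when $p\nmid n-1$) one shaves off exactly $\lfloor p/2\rfloor^2$ edges while preserving both $B_p$-freeness and saturation, and the parity of $n-p/2$ dictates whether one final edge can be saved, producing the term $\theta(n,p)$. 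One then checks freeness and saturation of the optimised graph via the dichotomy and counts edges to match the stated value.

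The substance is the lower bound, and this is where I expect the main difficulty. Using the hypothesis $n\ge p^3+p$, I would first show that the triangle-dense part of $G$ is confined: only boundedly many vertices can belong to clustered near-$K_{p+1}$ cores, and the overwhelming majority occupy \emph{petal} positions, each lying in one small near-$K_{p+1}$ attached to a small core. Translating the saturation dichotomy into a lower bound on individual degrees, where a low-degree vertex forces the completing-a-page alternative for all of its non-neighbours and thereby pins down its local structure, I would then sum degrees and prove $2|E(G)|\ge (p+1)(n-1)-\lfloor p/2\rfloor^2+\theta(n,p)$. The delicate points are showing that the extremal defect is at most $\lfloor p/2\rfloor^2$, that is, that no saturated configuration can overlap its dense cores more efficiently than the construction does, and that the parity correction $\theta(n,p)$ is genuinely forced; these floor and parity artifacts, rather than the routine edge count once the structure is fixed, are the crux of the argument.
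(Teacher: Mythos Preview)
This proposition is not proved in the paper at all: it is quoted verbatim from Chen, Faudree and Gould~\cite{ChenandFaudree} and used only in the concluding section to note that the special case $B_2\cong K_1\vee P_3$ is covered by the paper's Theorem~\ref{thm3-3} with a better range of $n$. There is therefore no proof in the paper to compare your proposal against.

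Your sketch is a plausible outline of the original Chen--Faudree--Gould argument, but since the present paper neither reproves nor relies on the internal structure of that argument, the comparison you were asked for is vacuous here.
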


\begin{prop}[\rm\!\!\cite{ChenandFaudree}]\label{prop5-2}
For $n\geq 4(p+2b)^b$, $sat(n,B_{b,p})=\frac{1}{2}((p+2b-3)(n_b+1)-\lceil\frac{p}{2}\rceil \lfloor\frac{p}{2}\rfloor+\theta(n,p,b)+(b-1)(b-2)$, where
$\theta(n,p,b)=
\begin{cases}
1,~\text{if } p\equiv n-p/2-b \equiv0(\bmod 2), \\
0,~\text{otherwise}.
\end{cases}$
\end{prop}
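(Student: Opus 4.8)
The plan is to establish the formula in two directions, following the now-standard template for saturation numbers of dense joins. First I would recast the target in a usable form. Since $B_{b,p}=K_{b}\vee\overline{K}_{p}$, a graph contains a copy of $B_{b,p}$ precisely when it has a clique $K_{b}$ whose $b$ vertices share at least $p$ common neighbours outside the clique. Hence $G$ is $B_{b,p}$-free if and only if every $b$-clique of $G$ has at most $p-1$ common neighbours, and $G$ is $B_{b,p}$-saturated if and only if, in addition, inserting any non-edge $xy$ either makes $x,y$ the spine of a $b$-clique (here $b=2$) with $p$ common neighbours, or completes a $b$-clique through $x$ or $y$ acquiring $p$ common neighbours. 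This reformulation is what drives both bounds.

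For the upper bound I would exhibit an explicit saturated graph attaining the claimed size. The construction takes $b-1$ \emph{universal} vertices $W$ spanning a $K_{b-1}$ and adjacent to everything else, contributing the $\frac{(b-1)(b-2)}{2}$ internal edges and the linear $(b-1)(n-b+1)$ contribution, together with a carefully chosen near-$(p-1)$-regular graph $R$ of bounded clique structure on the remaining $n-b+1$ vertices. Because $W$ is universal, every $b$-clique of the shape $W\cup\{z\}$ has common neighbourhood $N_{R}(z)$, so $B_{b,p}$-freeness forces $\deg_{R}(z)\le p-1$, which pins the leading coefficient at $\frac{1}{2}(p+2b-3)$; the local pattern inside $R$ (a clique-plus-matching core of the appropriate parity) is engineered so that inserting any missing edge completes a $b$-clique with exactly $p$ common neighbours. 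It is precisely the way the $p$ pages split across this core that produces the subtracted term $\lceil\frac{p}{2}\rceil\lfloor\frac{p}{2}\rfloor$ and the one-bit parity correction $\theta(n,p,b)$. Verifying saturation then reduces to a finite case check on where an added edge can sit relative to $W$ and $R$.

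For the lower bound I would argue, as in the body of this paper, by selecting a vertex $v$ of minimum degree and analysing its neighbourhood together with the decomposition of $V(G)\setminus N[v]$ by common-neighbour counts. The hypothesis $n\ge 4(p+2b)^{b}$ guarantees, via the K\'{a}szonyi--Tuza mechanism \cite{Kaszonyi}, that $G$ contains a rigid high-degree core of $b-1$ vertices behaving like the spine, after whose removal essentially every remaining vertex has degree at least $p+b-2$: if some vertex $u$ had smaller degree, then adding a suitable non-edge at $u$ could not create a $b$-clique with $p$ common neighbours, contradicting saturation. Summing degrees recovers the leading term $\frac{1}{2}(p+2b-3)(n-b+1)$, and a refined count of how the forced common neighbours must cluster yields the additive $\frac{(b-1)(b-2)}{2}$ and the subtractive $\lceil\frac{p}{2}\rceil\lfloor\frac{p}{2}\rfloor$.

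The hard part will be the exact additive constants rather than the asymptotics. Matching $\lceil\frac{p}{2}\rceil\lfloor\frac{p}{2}\rfloor$ and, especially, the parity term $\theta(n,p,b)$ requires tracking the parity of the leftover vertex set and showing that a perfect completion of the core is impossible in exactly the congruence classes recorded by $\theta$. This is a delicate double-counting and parity argument layered on top of the degree inequality, and it is where the large lower bound on $n$ and the full strength of the saturation condition are really consumed.
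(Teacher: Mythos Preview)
There is nothing in the paper to compare your proposal against: Proposition~\ref{prop5-2} is not proved here. It is quoted verbatim from Chen, Faudree and Gould~\cite{ChenandFaudree} and appears only in the concluding Section~6, where the special case $B_{3,2}\cong K_{2}\vee P_{3}$ is used to populate Table~1 and show that Theorem~\ref{thm1-1} improves the range of $n$ for which the saturation number of $K_{2}\vee P_{3}$ is known. The present paper supplies no argument for the general $B_{b,p}$ formula, so your sketch is being measured against a result the authors simply cite.

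That said, your outline is broadly aligned with how the original Chen--Faudree--Gould argument is organised: an explicit construction with a universal $K_{b-1}$ joined to a near-$(p-1)$-regular remainder for the upper bound, and a minimum-degree/neighbourhood analysis for the lower bound. One caution on your reformulation: you describe the saturation condition as ``inserting any non-edge $xy$ either makes $x,y$ the spine of a $b$-clique (here $b=2$) \ldots''; the parenthetical suggests you have momentarily specialised to $b=2$, but the proposition is for general $b$, and the saturation mechanism must allow the new edge $xy$ to complete \emph{any} $b$-clique containing it (not just $\{x,y\}$ itself when $b=2$) whose common neighbourhood then reaches size $p$. If you intend to write out a full proof, that case analysis over which $b$-clique the new edge completes is where most of the work in~\cite{ChenandFaudree} sits, and it is considerably more intricate than your last paragraph suggests.
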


\begin{prop}[\rm\!\!\cite{Faudree2013}]\label{prop5-3}
For $t\geq 5$ and $n\geq 7t-18$, $sat(n,K_t-P_4)=\lfloor \frac{(2t-7)(n-t+4)}{2} \rfloor+\binom{t-4}{2}+\eta(n,t)$, where $\eta(n,t)=2$ if $n-t$ is odd, and $0$ otherwise.
\end{prop}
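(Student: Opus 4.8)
The plan is to recognize $K_t-P_4$ as an instance of the join problem studied above. The path $P_4$ is self-complementary: if the deleted path on $\{1,2,3,4\}$ is $1$-$2$-$3$-$4$, the surviving edges among these four vertices are $13,14,24$, which form the path $3$-$1$-$4$-$2$, while the remaining $t-4$ vertices induce $K_{t-4}$ and are joined to all of $1,2,3,4$. Hence $K_t-P_4\cong K_{t-4}\vee P_4$, and the problem reduces to computing $sat(n,K_{t-4}\vee P_4)$, i.e.\ the case $s=t-4$, $k=4$ of the join saturation problem.

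For the upper bound I would take $G=K_{t-4}\vee F$, where $F$ is a minimal $P_4$-saturated graph of order $m=n-(t-4)$. Peeling off the clique vertices one at a time (each is conical) and applying Lemma \ref{lem2-3} repeatedly shows $G$ is $K_{t-4}\vee P_4$-saturated; equivalently this is the bound obtained by iterating Corollary \ref{cor2-4} (Proposition \ref{prop5-4} when $t\ge 7$). Thus $|E(G)|=\binom{t-4}{2}+(t-4)m+sat(m,P_4)$. Inserting the value of $sat(m,P_4)$ from Theorem \ref{thm2-1}(ii) gives $(t-4)m+sat(m,P_4)=\frac{(2t-7)m}{2}$ for even $m$ and $\frac{(2t-7)m+3}{2}$ for odd $m$. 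Since $m\equiv n-t\pmod 2$, in both parities this equals $\lfloor\frac{(2t-7)m}{2}\rfloor+\eta(n,t)$, so $|E(G)|$ matches the claimed value and the upper bound holds.

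The lower bound is the substantive part, and I would prove it by induction on $s=t-4$. The base cases $s=1$ ($t=5$) and $s=2$ ($t=6$) are exactly Theorem \ref{thm3-3} and Theorem \ref{thm1-1} with $k=4$. For the step, let $G$ be $K_s\vee P_4$-saturated. If $\Delta(G)=n-1$, a conical vertex $u$ makes $G-u$ a $K_{s-1}\vee P_4$-saturated graph by Lemma \ref{lem2-3}, so by Proposition \ref{pro3-1*} and the inductive hypothesis $|E(G)|\ge (n-1)+sat(n-1,K_{s-1}\vee P_4)$; a short computation shows the right-hand side equals exactly $\binom{s}{2}+s(n-s)+sat(n-s,P_4)$, closing this case with equality (and the threshold $n\ge 7t-18$ is comfortably preserved under $(n,t)\mapsto(n-1,t-1)$).

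The main obstacle is the remaining regime $\Delta(G)\le n-2$, where no conical vertex is available to drive the induction. Here I would imitate the argument of Section~4: pick a minimum-degree vertex $v$, set $U=V(G)\setminus N[v]$, and run the degree identities (\ref{eq4.1})--(\ref{eq4.2}). The analogues of Proposition \ref{pro4-1} and Corollary \ref{cor4-2} must be re-established for a $K_s$-center, namely that for every non-edge $uv$ the center of the newly created copy lies in $\{u,v\}\cup(N(u)\cap N(v))$; this forces $|N(u)\cap N(v)|$ to grow like $s$ and pushes $\delta(G)$ up toward the target average degree $2t-7=2s+1$. Carrying out the resulting case analysis on $|E(G[N(v)])|$ and on the distribution of $U$ into the sets $U_i$ is considerably heavier than for $s=2$, and this is precisely where the hypothesis $n\ge 7t-18$ is used, guaranteeing enough room in the path part to force the required common neighbours. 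This step amounts to proving the $k=4$ case of the paper's conjecture for $s\ge 3$, and it is where essentially all the difficulty concentrates.
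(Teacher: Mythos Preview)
The paper does not prove Proposition~\ref{prop5-3}; it is quoted as a result of Faudree and Gould~\cite{Faudree2013} and used only for comparison in the Conclusion. So there is no ``paper's own proof'' to compare against, and any argument here must stand on its own.

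Your reduction $K_t-P_4\cong K_{t-4}\vee P_4$ is correct, and for $t=5,6$ your plan succeeds: Theorems~\ref{thm3-3} and~\ref{thm1-1} (with $k=4$) give the lower bound outright, and the upper bound is the easy join construction. For $t\ge 7$, however, your inductive lower bound has a genuine gap in the case $\Delta(G)\le n-2$. You describe the intended analysis but do not carry it out, and you yourself note that completing it ``amounts to proving the $k=4$ case of the paper's conjecture for $s\ge 3$.'' That conjecture (Conjecture~5.2 in the paper) is explicitly left open here; the machinery of Section~4 is developed only for $s=2$, and the case analysis does not transfer automatically to larger $s$. So as written your argument proves the proposition only for $t\in\{5,6\}$ and is incomplete for $t\ge 7$. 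The actual proof in~\cite{Faudree2013} proceeds differently and does not rely on the join formula $sat(n,K_s\vee P_4)=\binom{s}{2}+s(n-s)+sat(n-s,P_4)$ being already established.
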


By $B_2\cong K_1\vee P_3$, $B_{3,2}\cong K_{2}\vee P_{3}$, $K_5-P_4\cong K_1\vee P_4$, Propositions \ref{prop5-1}-\ref{prop5-3}, Theorems \ref{thm1-1} and \ref{thm3-3}, the following table is obtained, and the results from Theorems \ref{thm1-1} and \ref{thm3-3} improve the results from Propositions \ref{prop5-1}-\ref{prop5-3}. Moreover, Theorems \ref{thm1-1} and \ref{thm3-3} completely
characterize the minimal $K_2\vee P_3$-saturated graphs, $K_1\vee P_3$-saturated graphs and $K_1\vee P_4$-saturated graphs, respectively.

\begin{table}[htbp]
    \centering
    \small
    \caption{Some results on the saturation number of special graphs.}
    \label{tab1}
    \renewcommand{\arraystretch}{1.5}
    \begin{tabular}{|c|c|c|c|}
        \hline
        & $K_1\vee P_3$ & $K_2\vee P_3$ & $K_1\vee P_4$\\
        \hline
        Propositions \ref{prop5-1}-\ref{prop5-3} & \multicolumn{1}{l|}{$\lfloor \frac{3n-3}{2} \rfloor$, $n\geq 10$} & \multicolumn{1}{l|}{$\lfloor \frac{5n-8}{2} \rfloor$, $n\geq 4\times 8^3$}  & \multicolumn{1}{l|}{$\begin{cases}
\frac{3n}{2},  & \text{even $n(\geq 18)$}\\
\frac{3n-3}{2}, & \text{odd $n(\geq 17)$}
\end{cases}$}\\
        \hline
        \makecell{Theorems \ref{thm1-1} and \ref{thm3-3}} & \multicolumn{1}{l|}{$\lfloor \frac{3n-3}{2} \rfloor$, $n\geq 4$} & \multicolumn{1}{l|}{$\lfloor \frac{5n-8}{2} \rfloor$, $n\geq 5$} & \multicolumn{1}{l|}{$\begin{cases}
\frac{3n}{2},  & \text{even $n(\geq 4)$}\\
\frac{3n-3}{2}, & \text{odd  $n(\geq 5)$}
\end{cases}$}\\
        \hline
    \end{tabular}
     \renewcommand{\arraystretch}{1}
\end{table}

\section*{Appendix A. Proofs}
We present the proofs of Lemma \ref{lem3-4} and Theorem \ref{thm3-3} here.

\subsection*{A.1. Proof of Lemma \ref{lem3-4}}

\begin{proof}
If $diam(G)\neq 2$, then $G$ is not $K_{1}\vee P_{4}$-saturated by Proposition \ref{pro3-2*}.

If $diam(G)=2$, then $5\leq n\leq 10$ by Observation \ref{obs1}. Since $\sum_{v\in V(G)}d(v)$ is even and $d(v)=3$ for any $v\in V(G)$, we have $n\in\{6,8,10\}.$ For $v\in V(G)$, we take $N(v)=\{x,y,z\}$ and $V(G)\backslash N[v]=\{w_1,w_2,\ldots,w_{n-4}\}$.

Let $n=6$. If $w_1w_2\notin E(G)$, then $N(w_1)=N(w_2)=\{x,y,z\}$ by Observation \ref{obs1}. Thus $G$ is not $K_{1}\vee P_{4}$-saturated since $G+vw_1$ contains no copy of $K_{1}\vee P_{4}$. If $w_1w_2\in E(G)$, then $N(w_1)\backslash\{w_2\}\subset\{x,y,z\}$ by Observation \ref{obs1}. Without loss of generality, let $N(w_1)\backslash\{w_2\}=\{x,y\}$. Then $w_2z\in E(G)$. Otherwise, $N(w_2)=\{w_1,x,y\}$, and thus $d(z)=1$, a contradiction. Since $G$ is $3$-regular, we have $G\cong H_4$.

Let $n=8$. Then $0\leq|E(G[N(v)])|\leq1$.

If $|E(G[N(v)])|=1$, without loss of generality, let $xy\in E(G)$, $w_1\in N(x)$, $w_2\in N(y)$ and $w_3,w_4\in N(z)$. Then $w_iw_1,w_iw_2\in E(G)$ by $d(w_i,x)=d(w_i,y)=2$ for $i\in \{3,4\}$ and Observation \ref{obs1}. Since $G+vw_1$ contains no copy of $K_{1}\vee P_{4}$, it follows that $G$ is not $K_{1}\vee P_{4}$-saturated.

If $|E(G[N(v)])|=0$, then $(N(x)\cap N(y))\backslash \{v\}\neq\emptyset$ or $(N(y)\cap N(z))\backslash \{v\}\neq\emptyset$ or $(N(x)\cap N(z))\backslash \{v\}\neq\emptyset$. Without loss of generality, let $w_1\in(N(x)\cap N(y))\backslash \{v\}\neq\emptyset$. When $w_1z\in E(G)$, then $G+vw_1$ contains no copy of $K_{1}\vee P_{4}$, and thus $G$ is not $K_{1}\vee P_{4}$-saturated.
When $w_1z\notin E(G)$, we take $w_2\in N(w_1)\backslash \{x,y\}$. Then $w_2z\in E(G)$ by $d(w_1,z)=2$. If $w_2\in N(x)$, then there exists $w_3\in N(y)$ by $d(x)=d(y)=3$, and thus there exists $w_4\in N(z)\cap N(w_3)$, which implies $d(w_4)=2$, a contradiction. Therefore, $w_2\notin N(x)$, and $w_2\notin N(y)$ similarly, which implies that $G+vw_1$ contains no copy of $K_1\vee P_4$, and thus $G$ is not $K_{1}\vee P_{4}$-saturated.

Let $n=10$. Then $|E(G[N(v)])|=0$. Let $w_1,w_2\in N(x)$, $w_3,w_4\in N(y)$ and $w_5,w_6\in N(z)$.
There exist $i\in\{3,4\}$ and $j\in\{5,6\}$ such that $w_1w_i,w_1w_j\in E(G)$ by $d(w_1,y)=d(w_1,z)=2$ and Observation \ref{obs1}, and thus $G$ is not $K_{1}\vee P_{4}$-saturated
since $G+vw_1$ contains no copy of $K_{1}\vee P_{4}$.
\end{proof}

\subsection*{A.2. Proof of Theorem \ref{thm3-3}}
\begin{proof}
Let $F$ be a minimal $P_{k}$-saturated graph of order $n-1$ and $G\in\mathbb{G}$. Then $K_1\vee F$ and $G$ are $K_{1}\vee P_{k}$-saturated graphs for $k\geq3$ by Lemma \ref{lem2-3} and direct checking.

By Theorem \ref{thm2-2}, we only consider $k\in\{3,4\}$, and $k=5$ with $n=6$.
Let $G$ be a $K_{1}\vee P_{k}$-saturated graph of order $n$.
In order to show (\ref{eq3.1}), by Theorem \ref{thm2-1}, we only need to show
\begin{equation}\label{eq3.2}
|E(G)|\geq n-1+sat(n-1,P_{k}).
\end{equation}

If $\Delta(G)=n-1$, then (\ref{eq3.2}) holds, with the equality if and only if $G\cong K_{1}\vee F$ by Proposition \ref{pro3-1*}, where $F$ is a minimal $P_{k}$-saturated graph of order $n-1$.

Now we assume $\Delta(G)\leq n-2$. Then $diam(G)=2$ by Proposition \ref{pro3-2*}, and thus $\delta(G)\geq2$.
In the following, we divide the remaining proof into three cases based on the values of $k$.

$\mathbf{Case~1:}$
$k=5$.

By Theorems \ref{thm2-1} and \ref{thm2-2}, we only need to show $|E(G)|>5+sat(5,P_5)=9$.

For any $e\in E(\overline{G})$, since $G+e$ contains a copy of $K_{1}\vee P_{5}$, we have $\Delta(G+e)=5$, and thus $\Delta(G)=4$ by $\Delta(G)\leq 4$.
Let $v\in V(G)$ with $d_{G}(v)=4$, $N(v)=\{v_1,v_2,v_3,v_4\}$ and $V(G)\backslash N[v]=\{w\}$ with $2\leq d_{G}(w)\leq 4$.
Since $G+vw$ contains a copy of $K_{1}\vee P_{5}$, say $Q$, we have $d_{Q}(v)=5$ or $d_{Q}(w)=5$, and thus $|E(G[N(v)])|\geq 2$ by $Q-v\cong P_5$ or $Q-w\cong P_5$.

If $d_{G}(w)=4$, then $|E(G)|\geq d_{G}(v)+d_{G}(w)+|E(G[N(v)])|\geq4+4+2=10$.

If $d_{G}(w)<4$, then $d_{G}(v_i)=4$ since $G+wv_i$ contains a copy of $K_1\vee P_5$ for any $v_i\notin N(w)$ and $1\leq i\leq4$, and thus
\[|E(G)|\geq
\begin{cases}
d_{G}(w)+d_{G}(v)+d_{G}(v_i)-1=10,  & \text{if $d_{G}(w)=3$ with $v_i\notin N(w)$}; \\
d_{G}(w)+d_{G}(v)+d_{G}(v_i)+d_{G}(v_j)-3=11, & \text{if $d_{G}(w)=2$ with $v_i,v_j\notin N(w)$}.
\end{cases}\]

%If $2\leq d(w)\leq 3$, then $d(v_i)=4$ by considering $G+wv_i$, where $v_i\notin N(w)$. It follows that $|E(G)|>9$.
%We can obtain $\Delta(G)=4$, since $G+e$ contains a copy of $K_{1}\vee P_{5}$, where $e\in E(\overline{G})$. Let $v\in V(G)$ such that $d(v)=4$, $N(v)=\{v_1,v_2,v_3,v_4\}$ and $V(G)\backslash N[v]=\{w\}$. Since $G+vw$ contains a copy of $K_{1}\vee P_{5}$, say $Q$, then $v\in C(Q)$ or $w\in C(Q)$. So $|E(G[N(v)])|\geq 2$.
%If $d(w)=4$, then $|E(G)|\geq 10$. Suppose $2\leq d(w)\leq 3$. For $v_i\notin N(w)$, we can get $d(v_i)=4$ by considering $G+wv_i$. It follows that if $d(w)=3$, then $|E(G)|\geq 11$ and if $d(w)=2$, then $|E(G)|\geq 10$.

$\mathbf{Case~2:}$ $k=3$.

In this case, we show that (\ref{eq3.2}) holds, with equality if and only if $G\in\{H_1,H_2,H_3\}$.

If $\delta(G)\geq 3$, then $|E(G)|\geq \frac{n\delta(G)}{2}=\frac{3n}{2}>\lfloor \frac{3n-3}{2} \rfloor$, (\ref{eq3.2}) holds.

Now we consider $\delta(G)=2$. Let $v\in V(G)$, $N(v)=\{v_1,v_2\}$, $U=V(G)\backslash N[v]$, $Z=N(v_1)\cap N(v_2)\cap U$, $X=(N(v_1)\cap U)\backslash Z$ and $Y=(N(v_2)\cap U)\backslash Z$.

$\mathbf{Subcase~2.1:}$ $v_1v_2\in E(G)$.

Then $Z=\emptyset$. Otherwise, if $z\in Z$, then $G[\{v,v_1,v_2,z\}]$ is a copy of $K_1\vee P_3$, a contradiction.
On the other hand, $X\neq\emptyset$ and $Y\neq\emptyset$ by $\Delta(G)\leq n-2$.

If $d(u)\geq3$ for some $u\in U$, then $2|E(G)|=\sum_{u\in N[v]}d(u)+\sum_{u\in U}d(u)\geq d(v)+(2+|X|)+(2+|Y|)+(3+2(n-4))= 3n-2$, and thus $|E(G)|\geq\lceil \frac{3n-2}{2} \rceil >\lfloor \frac{3n-3}{2} \rfloor$, (\ref{eq3.2}) holds.

If $d(u)=2$ for any $u\in U$, then $|X|=|Y|=1$, and thus $n=5$, $G\cong H_3$. Otherwise, without loss of generality, let $|X|\geq2$, $x_1,x_2\in X$ and $y\in Y$. Clearly, $E[X,Y]\neq \emptyset$ by $diam(G)=2$ and we take $x_1y\in E(G)$. Thus $d(x_1)\geq3$ or $d(y)\geq 3$ by $d(x_2,y)\leq 2$, a contradiction. Therefore, $|E(G)|=6=\lfloor \frac{3n-3}{2} \rfloor$, (\ref{eq3.2}) holds.

$\mathbf{Subcase~2.2:}$ $v_1v_2\notin E(G)$.

Then $Z\neq\emptyset$. Otherwise, $G+v_1v_2$ contains no copy of $K_{1}\vee P_{3}$, a contradiction. Moreover, $Z$ is an independent set of $G$ since $G$ contains no copy of $K_{1}\vee P_{3}$.

If $X=Y=\emptyset$, then $G[U]\cong (n-3)K_1$, and thus $|E(G)|=2(n-2)\geq\lfloor \frac{3n-3}{2} \rfloor$, (\ref{eq3.2}) holds, with equality if and only if $G\cong H_1$ for $n=4$, or $G\cong H_2$ for $n=5$.

If exactly one of $X$ and $Y$ is empty, without loss of generality, let $Y=\emptyset$ and $X\neq\emptyset$. Then $n\geq 5$.
For any $x\in X$, there exists a unique vertex $z_{x}\in Z$ such that $xz_{x}\in E(G)$ by $d(x,v_2)=2$. Otherwise, $|N(x)\cap Z|\geq2$, and $G[\{v_1,x\}\cup (N(x)\cap Z)]$ contains a copy of $K_1\vee P_3$, a contradiction.
When $|X|\geq 2$, say, $x_1,x_2\in X$, then $z_{x_1}\neq z_{x_2}$. Otherwise, $G[\{v_1,x_1,x_2,z_{x_1}\}]$ contains a copy of $K_1\vee P_3$, a contradiction. In addition, $x_1x_2\notin E(G)$. Otherwise, $G[\{v_1,x_1,x_2,z_{x_1}\}]$ contains a copy of $K_1\vee P_3$, a contradiction. Therefore, $X$ is an independent set of $G$, and $G[U]\cong |X|K_2\cup(n-3-2|X|)K_1$. Thus $|E(G)|=2+|X|+2|Z|+|X|=2(n-2)\geq\lfloor \frac{3n-3}{2} \rfloor$, (\ref{eq3.2}) holds, with equality if and only if $G\cong H_3$ for $n=5$.

If $X\neq\emptyset$ and $Y\neq\emptyset$, then $n\geq 6$. For any $x\in X$, there exists either a vertex $z\in Z$ such that $xz\in E(G)$, or a vertex $y\in Y$ such that $xy\in E(G)$ by $d(x,v_2)=2$.
%Note that $G+vx$ contains a copy of $K_{1}\vee P_{3}$.
Clearly, $G[X\cup Z]$ contains no copy of $P_3$.
When $xz\in E(G)$, then $d(z)\geq 4$, or $d(z)\geq 3$, $d(x)\geq3$ by $d(x,y_1)\leq2$ with some $y_1\in Y$.
When $xy\in E(G)$, then $d(x)\geq 3$ and $d(y)\geq3$ since $G+vx$ and $G+vy$ must contain a copy of $K_1\vee P_3$.
Thus there are two vertices with degree at least $3$ or one vertex with degree at least $4$ in $U$. Therefore, $2|E(G)|=\sum_{u\in N[v]}d(u)+\sum_{u\in U}d(u)\geq\left[2+(1+|X|+|Z|)+(1+|Y|+|Z|)\right]+2n-4=3n-3+|Z|\geq3n-2$, and thus $|E(G)|\geq\lceil \frac{3n-2}{2}\rceil >\lfloor \frac{3n-3}{2} \rfloor$, (\ref{eq3.2}) holds.

$\mathbf{Case~3:}$ $k=4$.

In this case, we show that (\ref{eq3.2}) holds, say, $|E(G)|\geq\lfloor \frac{3(n-1)}{2} \rfloor+\sigma_{n}$, with equality if and only if $G\in\{H_4,H_5,H_6\}$, where $\sigma_{n}=2$ if $n$ is even, and $0$ otherwise.

If $\delta(G)\geq 3$, then $|E(G)|\geq \frac{3n}{2}\geq\lfloor \frac{3(n-1)}{2} \rfloor+\sigma_{n}$, with equality if and only if $G\cong H_4$ for $n=6$ by Lemma \ref{lem3-4}.

Next, we assume $\delta(G)=2$. Let $v\in V(G)$, $N(v)=\{v_1,v_2\}$, $U=V(G)\backslash N[v]$, $Z=N(v_1)\cap N(v_2)\cap U$, $X=(N(v_1)\cap U)\backslash Z$ and $Y=(N(v_2)\cap U)\backslash Z$.

$\mathbf{Subcase~3.1:}$
$v_1v_2\in E(G)$.

Clearly, $X\neq\emptyset$, $Y\neq\emptyset$ by $\Delta(G)\leq n-2$, $d(v)+d(v_1)+d(v_2)=n+3+|Z|$, and
\begin{equation}\label{eq3.3}
2|E(G)|=\sum_{u\in N[v]}d(u)+\sum_{u\in U}d(u)=n+3+|Z|+\sum_{u\in U}d(u).
\end{equation}

$\mathbf{Subcase~3.1.1:}$ $Z=\emptyset$.

For any $x\in X$, $y\in Y$, both $G+vx$ and $G+vy$ contain a copy of $K_1\vee P_4$, then
neither $X$ nor $Y$ is an independent set.
Let $x_1x_2\in E(G[X])$ and $y_1y_2\in E(G[Y])$. Then $d(x_1)+d(x_2)+d(y_1)+d(y_2)\geq 12$ by $d(x_i,y_j)\leq 2$ for $i,j\in\{1,2\}$. Thus $2|E(G)|\geq n+3+12+2(n-7)=3n+1>2(\lfloor \frac{3(n-1)}{2} \rfloor+\sigma_{n})$ by (\ref{eq3.3}), (\ref{eq3.2}) holds.

$\mathbf{Subcase~3.1.2:}$ $Z\neq\emptyset$.

For any $z\in Z$, $d(z)=2$ since $G$ contains no copy of $K_{1}\vee P_{4}$, and thus $Z$ is an independent set of $G$. Then $|Z|\geq 2$. Otherwise, $G+vz$ contains no copy of $K_{1}\vee P_{4}$, where $Z=\{z\}$, a contradiction.

If $|Z|\geq 3$, then $2|E(G)|\geq n+3+|Z|+2(n-3)\geq 3n$, and thus $|E(G)|\geq \lceil \frac{3n}{2}\rceil\geq\lfloor \frac{3(n-1)}{2} \rfloor+\sigma_{n}$, (\ref{eq3.2}) holds, with equality if and only if $n$ is even, $|Z|=3$, and $d(u)=2$ for any $u\in U$, which implies $|X|=|Y|=1$, and $G\cong H_6$ by $diam(G)=2$.

If $|Z|=2$, then $n\geq 7$. For any $x\in X$ and $y\in Y$, there exists a path with length $1$ or $2$ between $x$ and $y$ in $G[X\cup Y]$ by $d(x,y)\leq 2$. Then $G[X\cup Y]$ is connected, and $|E(G[X\cup Y])|\geq |X|+|Y|-1=n-6$. When $n\geq 9$,
then $|E(G)|\geq 3+|X|+|Y|+2|Z|+|E(G[X\cup Y])|\geq 2n-4>\lfloor \frac{3(n-1)}{2} \rfloor+\sigma_{n}$, (\ref{eq3.2}) holds.
When $n=7$, then $|E(G)|=10>\lfloor \frac{3(n-1)}{2} \rfloor+\sigma_{n}=9$, (\ref{eq3.2}) holds.
When $n=8$, then $G[X\cup Y]\cong K_3$. Otherwise, there exist $w_1,w_2\in X\cup Y$ with $w_1w_2\notin E(G)$, but $G+w_1w_2$ contains no copy of $K_1\vee P_4$, a contradiction.
Thus $|E(G)|=13>\lfloor \frac{3(n-1)}{2} \rfloor+\sigma_{n}=12$, (\ref{eq3.2}) holds.

$\mathbf{Subcase~3.2:}$
$v_1v_2\notin E(G)$.

Then $|Z|\geq 1$. Otherwise, $G+v_1v_2$ contains no copy of $K_{1}\vee P_{4}$, a contradiction. For any $z\in Z$, since $d(v)=2$ and $G+vz$ contains a copy of $K_{1}\vee P_{4}$, it follows that $d(z)\geq 3$.

Clearly, we have $d(v)+d(v_1)+d(v_2)=n+1+|Z|$, and
\begin{equation}\label{eq3.4}
2|E(G)|=\sum_{u\in N[v]}d(u)+\sum_{u\in U}d(u)=n+1+|Z|+\sum_{u\in U}d(u).
\end{equation}

$\mathbf{Subcase~3.2.1:}$ $|Z|\geq 3$.

Then
$2|E(G)|\geq n+1+|Z|+3|Z|+2(|X|+|Y|)=3n-5+2|Z|\geq 3n+1>2(\lfloor\frac{3(n-1)}{2} \rfloor+\sigma_{n})$ by (\ref{eq3.4}) and $d(z)\geq3$ for any $z\in Z$, (\ref{eq3.2}) holds.

$\mathbf{Subcase~3.2.2:}$ $|Z|=2$.

We take $Z=\{z_1,z_2\}$. Then $d(z_1)+d(z_2)\geq 6$.

If $d(z_1)+d(z_2)\geq 8$, then $2|E(G)|\geq n+1+|Z|+8+2(n-5)=3n+1$ by (\ref{eq3.4}).

If $d(z_1)+d(z_2)=7$, then $z_1z_2\notin E(G)$. Otherwise, $G[N(z_1)\cup N(z_2)]$ contains a copy of $K_1\vee P_4$, a contradiction. Without loss of generality, we take $d(z_1)=4$ and $d(z_2)=3$. By Observation \ref{obs1} and $z_1z_2\notin E(G)$, we have $N(z_1)\backslash\{v_1,v_2\}\subseteq X\cup Y$.

We first consider $|N(z_1)\cap X|=2$ or $|N(z_1)\cap Y|=2$, without loss of generality, let $|N(z_1)\cap X|=2$ and $N(z_1)\cap X=\{x_1,x_2\}$. Then $d(x_1)\geq 3$ or $d(x_2)\geq 3$. Otherwise, $d(x_1)=d(x_2)=2$, and $G+x_1x_2$ contains no copy of $K_1\vee P_4$, a contradiction. Thus $2|E(G)|\geq n+1+|Z|+7+2(n-6)+3=3n+1$ by (\ref{eq3.4}).

Next we consider $|N(z_1)\cap X|=1$ and $|N(z_1)\cap Y|=1$. Let $N(z_1)\cap X=\{x_1\}$ and $N(z_1)\cap Y=\{y_1\}$. Then $d(x_1)\geq 3$ and $d(y_1)\geq 3 $ since $G+vx_1$ and $G+vy_1$ must contain a copy $K_1\vee P_4$. Thus $2|E(G)|\geq n+1+|Z|+7+2(n-7)+6=3n+2$ by (\ref{eq3.4}).

If $d(z_1)+d(z_2)=6$, then $d(z_1)=d(z_2)=3$ by $d(z)\geq3$ for any $z\in Z$.
When $z_1z_2\in E(G)$, then $n\neq 6$ by $\delta(G)=2$, and $d(w)\geq 3$ for any $w\in U$ since $G+vw$ must contain a copy of $K_1\vee P_4$, and thus $2|E(G)|\geq n+1+|Z|+3(n-3)=4n-6$ by (\ref{eq3.4}).
When $z_1z_2\notin E(G)$, let $N(z_1)\cap U=\{w_1\}$ and $N(z_2)\cap U=\{w_2\}$. For $w_1=w_2$, $d(w_1)\geq 4$ since $G+vw_1$ must contain a copy of $K_1\vee P_4$, and thus $2|E(G)|\geq n+1+|Z|+6+2(n-6)+4=3n+1$ by (\ref{eq3.4}).
For $w_1\neq w_2$, $d(w_i)\geq 3$ for $i\in\{1,2\}$ since $G+vw_i$ contains a copy of $K_1\vee P_4$, and thus $2|E(G)|\geq n+1+|Z|+6+2(n-7)+6=3n+1$ by (\ref{eq3.4}).

Combining above arguments, we have $|E(G)|>\lfloor \frac{3(n-1)}{2} \rfloor+\sigma_{n}$, (\ref{eq3.2}) holds.

$\mathbf{Subcase~3.2.3:}$ $|Z|=1$.

If $X=\emptyset$ or $Y=\emptyset$, without loss of generality, let $Y=\emptyset$, $X=\{x_1,x_2,\dots,x_{n-4}\}$ and $Z=\{z\}$. Then $x_iz\in E(G)$ by $d(x_i,v_2)=2$ for $1\leq i\leq n-4$, and thus $d(z)=n-2$.
When $n=5$, then $G+vx_1$ contains no copy of $K_1\vee P_4$, a contradiction.
When $n=6$, then $x_1x_2\in E(G)$. Otherwise, $G+x_1x_2$ contains no copy of $K_1\vee P_4$, a contradiction. Thus $G\cong H_5$, and $|E(G)|=9=\lfloor \frac{3(n-1)}{2} \rfloor+\sigma_{n}$, (\ref{eq3.2}) holds.
When $n\geq 7$, we have $G[X]\cong (n-4)K_1$ since $G$ contains no copy of $K_1\vee P_4$. Thus $|E(G)|=2n-4\geq\lfloor \frac{3(n-1)}{2} \rfloor+\sigma_{n}$, (\ref{eq3.2}) holds, with equality if and only if $G\cong H_6$ for $n=8$.

If $X\neq\emptyset$ and $Y\neq\emptyset$, then $n\geq6$ and we take $Z=\{z\}$. By $d(z)\geq 3$, we can complete the proof by the following arguments.

Let $d(z)=3$. Without loss of generality, we take $N(z)\backslash\{v_1,v_2\}=\{w\}$ with $w\in X$. Then $d(w)\geq3$ by $d(w,y)\leq2$, where $y\in Y$. Moreover, $|X|\geq 2$. Otherwise, $G+vw$ contains no copy of $K_1\vee P_4$, a contradiction. Thus $n\geq 7$.
For any $x\in X\backslash\{w\}$, $N(x)\cap Y\neq\emptyset$ by $d(x,v_2)=2$. Since $G+vx$ must contain a copy of $K_1\vee P_4$, we have $d(x)\geq3$.
Similarly, $d(y)\geq3$ for any $y\in Y$.
Thus $d(u)\geq 3$ for any $u\in U$. It follows that $2|E(G)|\geq n+1+|Z|+3(n-3)=4n-7>2(\lfloor \frac{3(n-1)}{2} \rfloor+\sigma_{n})$ by (\ref{eq3.4}), (\ref{eq3.2}) holds.

Let $d(z)\geq 4$. Then we consider $|N(z)\cap X|$ and $|N(z)\cap Y|$.

When $|N(z)\cap X|=1$ and $|N(z)\cap Y|=1$, then $d(z)=4$, and we take $N(z)\cap X=\{x_1\}$ and $N(z)\cap Y=\{y_1\}$. Then $x_1y_1\notin E(G)$ since $G$ contains no copy of $K_{1}\vee P_{4}$. If $n=6$, then $G+vx_1$ contains no copy of $K_{1}\vee P_{4}$, a contradiction.
If $n\geq 7$, without loss of generality, let $|X|\geq 2$ and $x_2\in X$. Then there exists some $y\in Y$ such that $x_2y\in E(G)$ by $d(x_2,v_2)=2$. Thus $d(x_i)\geq 3$ for $i\in\{1,2\}$ and $d(y_1)\geq 3$ since $G+vx_i$ and $G+vy_1$ must contain a copy of $K_{1}\vee P_{4}$. Therefore, $2|E(G)|\geq n+1+|Z|+d(z)+d(x_1)+d(x_2)+d(y_1)+2(n-7)\geq3n+1$ by (\ref{eq3.4}), (\ref{eq3.2}) holds.

When $|N(z)\cap X|\geq 2$ or $|N(z)\cap Y|\geq 2$, without loss of generality, let $x_1,x_2\in N(z)\cap X$. Now we study the case $x_1x_2\in E(G)$ or the case $x_1x_2\notin E(G)$.

Let $x_1x_2\in E(G)$. Then $G[\{x_1,x_2,z\}]\cong K_3$ is a connected component of $G[X\cup Z]$ since $G$ contains no copy of $K_{1}\vee P_{4}$. If there exists $y\in Y$ such that $yz\in E(G)$, then $d(z)+d(y)\geq 7$. If $yz\notin E(G)$ for any vertex $y\in Y$, then $yx_1\in E(G)$ or $yx_2\in E(G)$ by $d(y,x_1)\leq 2$, and thus $d(y)\geq 3$ since $G+vy$ contains a copy of $K_1\vee P_4$, and thus $d(z)+d(y)\geq 7$. Therefore, $2|E(G)|\geq n+1+|Z|+d(z)+d(y)+2(n-7)+d(x_1)+d(x_2)\geq3n+1$ by (\ref{eq3.4}), (\ref{eq3.2}) holds.

Let $x_1x_2\notin E(G)$.
Then $x_1$ and $x_2$ are isolated vertices in $G[X]$. Otherwise, there exists $x_3\in X$ with $x_ix_3\in E(G)$ for some $i\in \{1,2\}$, then $G[\{v_1,x_1,x_2,x_3,z\}]$ contains a copy of $K_1\vee P_4$, a contradiction.
If there exists $y\in Y$ such that $yz\notin E(G)$, then $d(x_i)\geq3$ by $d(x_i,y)\leq2$ for any $i\in\{1,2\}$. Since $G+vy$ must contain a copy of $K_1\vee P_4$, say $Q$, we have $d(y)\geq 3$ or $d(y_1)\geq3$ for some $y_1\in V(Q)$. Thus $2|E(G)|\geq n+1+|Z|+d(z)+2(n-7)+9\geq3n+1$ by (\ref{eq3.4}), (\ref{eq3.2}) holds.
If $yz\in E(G)$ for any $y\in Y$, then $xy\notin E(G)$ for any $x\in N(Z)\cap X$. Otherwise, $G[\{z,v_1,v_2,x,y\}]$ contains a copy of $K_1\vee P_4$, a contradiction. Thus $d(x_1)=d(x_2)=2$. Since $x_1x_2\notin E(G)$, $G+x_1x_2$ must contain a copy of $K_1\vee P_4$, say $Q'$. It is easy to obtain $\{x_1,x_2,z,v_1\}\subseteq V(Q')$, $d_{Q'}(v_1)=4$ or $d_{Q'}(z)=4$. Then $|N(z)\cap X|\geq3$, and thus $|Y|\geq2$ since $G+vy$ must contain a copy of $K_1\vee P_4$, where $y\in Y$. Thus $d(z)\geq7$ and $2|E(G)|\geq n+1+|Z|+d(z)+2(n-4)\geq3n+1$ by (\ref{eq3.4}), (\ref{eq3.2}) holds.

Based on above arguments and Theorem \ref{thm2-2}, we complete the proof of Theorem \ref{thm3-3}.
\end{proof}

\def\polhk#1{\setbox0=\hbox{#1}{\ooalign{\hidewidth
\lower1.5ex\hbox{`}\hidewidth\crcr\unhbox0}}}

\end{document}